\newcommand{\QQ}{\mathbb{Q}}
\newcommand{\NN}{\mathbb{N}}
\newcommand{\ZZ}{\mathbb{Z}}
\newcommand{\PP}{\mathcal{P}}
\newcommand{\arb}[1]{\includegraphics[height=5mm]{a#1.pdf}}
\newcommand{\prelie}{\operatorname{PreLie}}
\newcommand{\lie}{\operatorname{Lie}}
\newcommand{\pl}{\curvearrowleft}
\newcommand{\pun}{\includegraphics[height=2.5mm]{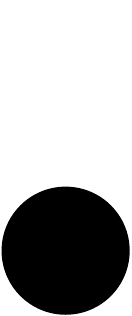}}
\newcommand{\corol}{\mathtt{Crl}}
\newcommand{\COR}{\textsc{Crls}}
\newcommand{\sumt}{\textsc{H}}
\newcommand{\linear}{\mathtt{Lnr}}
\newcommand{\fork}{\mathtt{Frk}}
\newcommand{\dend}{\operatorname{Dend}}
\newcommand{\dun}{\includegraphics[height=3mm]{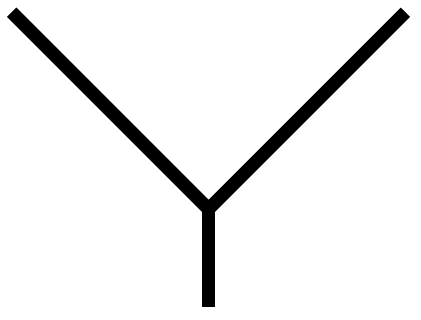}}
\newcommand{\Id}{\operatorname{Id}}
\newcommand{\gp}{\mathsf{G}}
\newcommand{\diam}{\diamond}
\newcommand{\aut}{\operatorname{aut}}
\newcommand{\compo}[2]{#1 \pmb{(}#2\pmb{)}}
\newcommand{\fl}{\mathbb{F}}
\newcommand{\valor}{\mathbb{V}}
\newcommand{\ca}{\operatorname{\mathbf{ca}}}
\renewcommand{\phi}{\varphi}
\newcommand{\eps}{\epsilon}
\newcommand{\prt}{\operatorname{Part}}
\newcommand{\rt}{\operatorname{rt}}
\newcommand{\sym}{\mathfrak{S}}
\newcommand{\sD}{\mathsf{D}} % idempotent
\newcommand{\sE}{\mathsf{E}} % auxiliaire
\newcommand{\sF}{\mathsf{F}} % connexe sans racine
\newcommand{\sZ}{\mathsf{Z}} % quotient sans racine
\newcommand{\sL}{\mathsf{L}} % left combs
\newcommand{\sR}{\mathsf{R}} % right combs
\newcommand{\sU}{\mathsf{U}} % 
\newcommand{\sV}{\mathsf{V}} %
\newcommand{\xD}{\mathscr{D}} % idempotent
\newcommand{\xE}{\mathscr{E}} % auxiliaire
\newcommand{\xF}{\mathscr{F}} % connexe sans racine
\newcommand{\xY}{\mathscr{Y}} % quotient
\newcommand{\xZ}{\mathscr{Z}} % quotient sans racine
\newcommand{\pp}{\includegraphics[height=3mm]{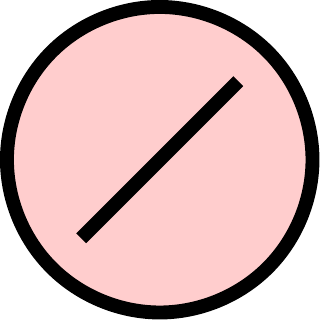}}
\newcommand{\mm}{\includegraphics[height=3mm]{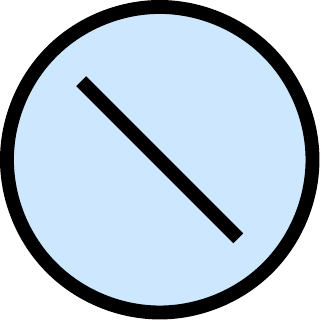}}
\newcommand{\dP}{\mathbf{P}} % serie POS
\newcommand{\dN}{\mathbf{N}} % serie NEG
\newcommand{\dT}{\mathbf{T}} % serie milieu
\newcommand{\dU}{\mathbf{U}} % 
\newcommand{\dV}{\mathbf{V}} % 
\newcommand{\ncsf}{\mathbf{Sym}}
\newcommand{\fqsym}{\mathbf{FQSym}}
\newcommand{\al}{\operatorname{Lie}}
\newtheorem{theorem}{Theorem}[section] 
\newtheorem{proposition}[theorem]{Proposition} 
\newtheorem{conjecture}[theorem]{Conjecture} 
\newtheorem{corollary}[theorem]{Corollary} 
\newtheorem{lemma}[theorem]{Lemma}
\newtheorem{remark}[theorem]{Remark}
\newtheorem{question}[theorem]{Question}
\newenvironment{proof}{\begin{trivlist}\item{\bf{Proof.}}}
  {\hfill\rule{2mm}{2mm}\end{trivlist}}
\title{Flows on rooted trees and \\the Narayana idempotents}
\author{F. Chapoton}
\date{\today}
\begin{document}

\maketitle

\begin{abstract}
  Several generating series for flows on rooted trees are introduced,
  as elements in the group of series associated with the Pre-Lie
  operad. By combinatorial arguments, one proves identities that
  characterise these series. One then gives a complete description of
  the image of these series in the group of series associated with the
  Dendriform operad. This allows to recover the Lie idempotents in the
  descent algebras recently introduced by Menous, Novelli and
  Thibon. Moreover, one defines new Lie idempotents and conjecture the
  existence of some others.
\end{abstract}

% \begin{quotation}
%   \textit{La lune était sereine et jouait sur les flots.}
%                               \flushright        \textsc{Victor Hugo.}
% \end{quotation}

\section*{Introduction}

Let us start by introducing the context of this work, that can
summarized by the following diagram.

\begin{equation*}
\xymatrix{
 \ncsf \ar@{ ->}[r]       & \dend \ar@{ ->}[r]       & \fqsym       \\
 \ncsf \cap \al \ar@{ ->}[r]\ar[u] & \dend \cap \al \ar@{ ->}[r]\ar@{ ->}[u] &  \al\ar@{ ->}[u] \\
               & \prelie \ar@{ ->}[u]^{\phi}_{\simeq ?}      &
}
\end{equation*}

At the top left corner, $\ncsf$ is the graded Hopf algebra of
non-commutative symmetric functions \cite{ncsf}, which has a basis indexed by
compositions of integers. At the top right corner, $\fqsym$ is the
graded Hopf algebra of free quasi-symmetric functions, also known as
the Malvenuto-Reutenauer algebra \cite{malvenuto}, which has a basis
indexed by permutations. These two Hopf algebras can be considered as
non-commutative analogues of the classical Hopf algebra of symmetric
functions. They have been studied a lot, and have proved to be useful
in algebraic combinatorics, see for example \cite{thibon_lectures,ncsf6}.

At the middle of the top line, $\dend$ is the free Dendriform algebra
on one generator. This is also a graded Hopf algebra, also known as
the Loday-Ronco Hopf algebra \cite{loday_pbt}, and has a basis indexed
by planar binary trees. The horizontal morphisms of the first line are
inclusions of Hopf algebras, and can be described using appropriate
equivalence relations on permutations, see for instance
\cite{loday_pbt}.

On the second line, the subspace $\al$ of $\fqsym$ has two equivalent
descriptions. First, one can map $\fqsym$ into a space of rational
moulds, as described in \cite{moulds}. Then $\al$ is the subspace of
alternal elements, in the terminology of the mould calculus of Ecalle
\cite{ecalle_ari,ecalle_tale}. One can also identify $\fqsym$ with the
direct sum of all group rings of symmetric groups, and therefore to the
associative operad. Then $\al$ is the space of Lie elements, or the
image of the $\lie$ operad in the associative operad.

On the left of second line is the intersection of the subspaces
$\ncsf$ and $\al$ of $\fqsym$. It is known to be exactly the subspace
of primitive elements in the Hopf algebra $\ncsf$, by results of
\cite{ncsf}.

The intersection at the middle of the second line is quite
interesting. Starting from the usual injective morphism from the
Pre-Lie operad to the Dendriform operad, one gets an injective
morphism $\phi$ from the free Pre-Lie algebra on one generator,
denoted here by $\prelie$, to $\dend$. It was proved in \cite{moulds}
that its image is contained in the intersection $\dend \cap \al$.

It is conjectured that $\phi$ is an isomorphism from $\prelie$ to
$\dend \cap \al$. This has been checked for small degrees. If this
isomorphism holds, it would have interesting consequences for the
theory of Lie idempotents, that we will now present.

Recall that a Lie idempotent is an element $\theta$ in the group ring
$\QQ[\sym_n]$ of the symmetric group, such that $\theta$ is
idempotent, and such that the product by $\theta$ is a projector onto
the subspace of Lie elements. The set of Lie idempotents is an affine
subspace of the group ring $\QQ[\sym_n]$. There are many known
examples of Lie idempotents, and most of them belong to a sub-algebra
of $\QQ[\sym_n]$, the Solomon descent algebra.

There is a natural way to identify $\QQ[\sym_n]$ with the graded
component of degree $n$ of $\fqsym$. By this isomorphism, Solomon
descent algebra is identified with the graded component of degree $n$
of $\ncsf$. Moreover, the subspace of primitive elements of $\ncsf$
corresponds to the intersection of Solomon descent algebras with the
vector space spanned by Lie idempotents \cite{ncsf}.

From all this, one can deduce that any Lie idempotent in the descent
algebra gives an element in the intersection $\ncsf \cap \al$ and
therefore also in $\dend \cap \al$. If $\phi$ is an isomorphism, this
element will come from an element of $\prelie$. Conversely, given an
element of $\prelie$, if one can check that its image by $\phi$
belongs to $\ncsf$, then it will belong to $\ncsf \cap \al$ and
will define, up to multiplication by a scalar, a Lie idempotent in the
descent algebra.

Given any specific Lie idempotent in the descent algebra, one can
therefore ask for a description of its pre-image by $\phi$. This has
been obtained in \cite{qidempotent} for a one parameter familly of Lie
idempotents. The starting point of this article was to do the same for
a specific familly of Lie idempotents, that has just been recently
introduced. Let us now present them briefly.

Inspired by previous works by Ecalle and Menous
\cite{menous_bm,ecalle_menous} on the Alien calculus, Menous, Novelli
and Thibon have defined in \cite{menoth} a sequence of Lie idempotents
$\sD_n$ in the descent algebra of the symmetric group $\sym_n$. The
coefficients of $\sD_n$ in the basis of ribbon Schur functions are given by
homogeneous polynomials in two variables $a$ and $b$, more precisely
products of powers of $a$ and $b$ and Narayana polynomials in $a$ and
$b$. By homogeneity, one can let $a=1$ in the coefficients of $\sD_n$
without losing any information. We will therefore work with
polynomials in $b$ only.

By computing, for small $n$, the elements $\xD_n$ in $\prelie$ whose
image by $\phi$ is $\sD_n$, one observes that their coefficients are
positive polynomials in $b$ and seem to factorise according to
subtrees, with factors being also positive polynomials in $b$.

The first result of the present article is a combinatorial description
of the coefficients of $\xD_n$ and their factors, in terms of flows on
rooted trees.

To achieve this, one works inside groups of operadic series,
associated with the Pre-Lie and Dendriform operads. All the
idempotents $\sD_n$ are gathered into one series $\sD$ in the group
$\gp_{\dend}$ associated with the Dendriform operad. Their pre-images
$\xD_n$ by $\phi$ are similarly grouped in a series $\xD$ in the group
$\gp_{\prelie}$ associated with the Pre-Lie operad.

We proceed in the following order. First, we introduce the
combinatorial notion of flow on a rooted tree, and describe its
properties. Next, we obtain, from combinatorial arguments, various
functional equations satisfied by several series in the Pre-Lie group,
whose coefficients count different kinds of flows. We then go on to
introduce some series in the Dendriform group, and to show, by
algebraic means, that they satisfy another set of functional
equations. By comparing the functional equations in the Pre-Lie and
Dendriform cases, one can then recognize among the dendriform series
the images by $\phi$ of some of the Pre-Lie series.

On the way, one uses many auxiliary series, and some of them have
interesting properties. In particular, one does not only recover the
Lie idempotents $\sD_n$ of \cite{menoth}, but also gets a new familly
$\sF_n$ of Lie idempotents, related to closed connected
flows. Moreover, two other conjectural famillies $\sZ_n$ and
$\sF_{n,t}$ of Lie idempotents are proposed, for which we have not
been able to obtain a full proof. In the case of $\sF_{n,t}$, one is
missing a combinatorial proof of the existence of a Pre-Lie series
$\xF_t$ and so we do not know if the dendriform series $\sF_{t}$ is a
Lie element or not. In the case of $\sZ_n$, one only has a conjectural
description of the coefficients of the dendriform series $\sZ$, and so
we do not know if it belong to $\ncsf$.

\medskip

We gather in an appendix some technical tools that are necessary to
turn combinatorial bijections into equalities of series in groups
associated with operads. The notions of rooted-operad and
rooted-monoid that are introduced here may be of independent interest.

\section{Rooted trees and the $\prelie$ operad}

\subsection{Notations for rooted trees}

\label{section_pl}

A \textbf{rooted tree} is a finite connected and simply connected graph,
together with a distinguished vertex called the root.

Rooted trees will be considered implicitly as directed graphs by
orienting every edge towards the root. 

The \textbf{valency} $v_s$ of a vertex $s$ in a rooted tree is the
number of incoming edges.

The \textbf{height} of a vertex $s$ in a rooted tree is defined as
follows: the height of the root is $0$, and the height of the source
of every edge is $1$ more than the height of its end.

Rooted trees of maximal height at most $1$ are called \textbf{corollas}. Rooted
trees of maximal valency at most $1$ are called \textbf{linear trees}.

A rooted tree $T$ will sometimes be considered as a partially ordered
set whose Hasse diagram is given by the orientation towards the root, with
the root as the unique minimal element.

A \textbf{leaf} in a rooted tree $T$ is a vertex of valency $0$. A leaf can
also be defined as a maximal vertex.

Rooted trees will be drawn with their root at the bottom and leaves at the top.

If $T_1,\dots,T_k$ are rooted trees, we will denote
$B_+(T_1,\dots,T_k)$ the rooted tree obtained by grafting together
$T_1,\dots,T_k$ on a new common root.

Let $\arb{0}$ be the rooted tree with one vertex.

Let $\linear_\ell$ be the linear rooted tree with $\ell$ vertices,
defined by induction:
\begin{equation*}
  \linear_1=\arb{0}\quad\text{and}\quad \linear_{\ell+1}= B_+(\linear_\ell).
\end{equation*}

Let $\corol_n$ be the corolla with $n+1$ vertices, defined by
\begin{equation*}
  B_+(\arb{0},\dots,\arb{0}),
\end{equation*}
with $n$ copies of $\arb{0}$.

Let $\fork_{i,n-i}$ be the fork with $n$ vertices, with stem of size
$i$, defined by induction:
\begin{equation*}
  \fork_{1,\ell}=\corol_{\ell}\quad\text{and}\quad \fork_{k+1,\ell}=B_+(\fork_{k,\ell}).
\end{equation*}

Examples of linear trees, corollas and forks are depicted in figure
\ref{fig:fork}.

The number of vertices of a rooted tree $T$ will be denoted by $\#T$.

\begin{figure}
  \centering
  \includegraphics[height=3cm]{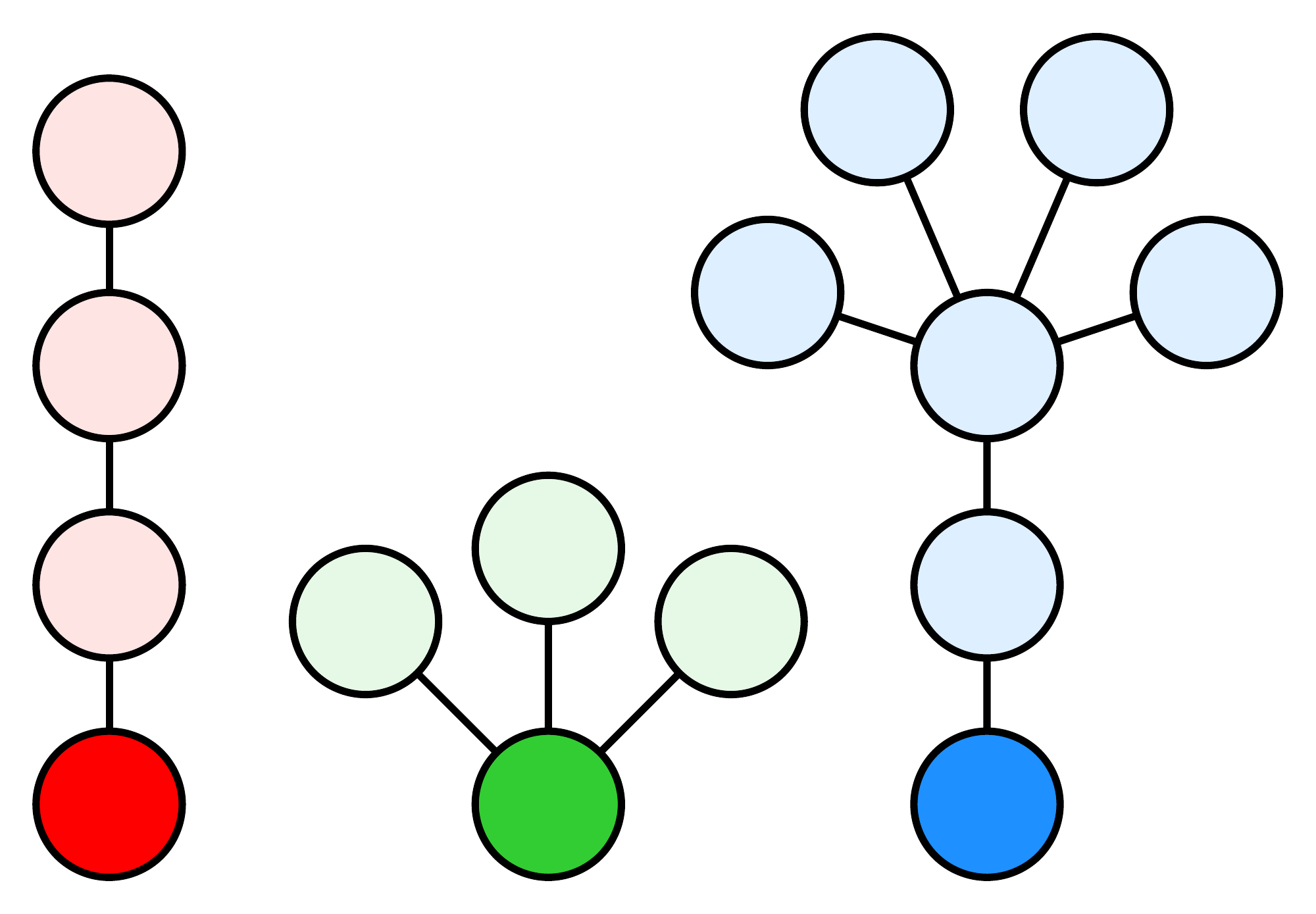}
  \caption{Linear tree $\linear_4$, corolla $\corol_3$ and fork $\fork_{3,4}$.}
  \label{fig:fork}
\end{figure}

% For a rooted tree $T$, denote by $T(x,y)$ the decoration of $T$ that
% has $x$ at the root and $y$ elsewhere.

\subsection{The group of rooted trees}

For more details on the general construction of the group of series
associated $\gp_\PP$ with an operad $\PP$, the reader may consult the
appendix \ref{appA}, \cite{chaplive2} and \cite[App. A]{qidempotent}.

We will work in the group of series $\gp_{\prelie}$ associated with
the Pre-Lie operad. This group is contained in the free Pre-Lie
algebra on one generator, denoted here by $\prelie$.

The Pre-Lie operad has a basis indexed by labelled rooted
trees \cite{chaplive1}. It follows that the Pre-Lie algebra on one
generator has a basis index by (unlabelled) rooted trees.

For a series $\xD$ in the group of rooted trees, we will use $\xD_T$
to denote the coefficient of the rooted tree $T$ in $\xD$, in the
following sense:
\begin{equation}
  \xD = \sum_{T} \frac{\xD_T}{\aut(T)} T,
\end{equation}
where $\aut(T)$ is the cardinal of the automorphism group of $T$.

The homogeneous component of $\xD$ of degree $n$ will be denoted by
$\xD_n$.

We will use the following special notation for the sum of all corollas:
\begin{equation}
  \label{defi_crls}
  \COR=\sum_{n \geq 0} \frac{\corol_n}{n!}.
\end{equation}

%where $\diam$ is the ternary operation introduced in the appendix\ref{appA}.

Let $\sumt_k$ be the element
\begin{equation}
  \sumt_k  =  \sum_{T} k^{\#T-1} \frac{T}{\aut_T}
\end{equation}
of the group $\gp_{\prelie}$. Its coefficients are polynomials in the
variable $k$.

\begin{lemma}
  \label{inversion_somme_tous}
  One has
  \begin{equation}
    \sumt_{k} \circ \sumt_{\ell} = \sumt_{k+\ell},
  \end{equation}
  where $k$ and $\ell$ are formal variables. In particular, when $k$
  is a positive integer, $\sumt_k$ is the $k^{th}$ power of $\sumt_1$ for the
  group law of $\gp_{\prelie}$. The inverse of $\sumt_k$ is $\sumt_{-k}$.
\end{lemma}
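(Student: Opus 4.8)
The plan is to read the group law of $\gp_{\prelie}$ as a substitution (edge-cutting) operation on rooted trees, and then to evaluate it directly on the two series. Recall that the Pre-Lie operad has a basis of labelled rooted trees, so the operadic composition underlying the group law of $\gp_{\prelie}$ admits the following tree-theoretic description. Given two series $f,g \in \gp_{\prelie}$, the coefficient of a rooted tree $T$ in $f \circ g$ is obtained by summing over all subsets $C$ of the edge set of $T$: removing the edges of $C$ cuts $T$ into connected \emph{blocks} $B_1,\dots,B_b$ (each a rooted subtree, rooted at its minimal vertex), with $b$ equal to one plus the number of edges in $C$; contracting each block to a point yields a \emph{quotient} rooted tree $S$ on $b$ vertices. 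Each such decomposition contributes the product of the coefficient of $S$ in $f$ with the coefficients of the blocks $B_i$ in $g$. One checks that $\arb{0}$ is the identity for this law (it forces either all blocks to be singletons, or a single block equal to $T$), in agreement with the fact that $\sumt_0 = \arb{0}$.

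First I would specialise to $f = \sumt_k$ and $g = \sumt_\ell$. For a decomposition of $T$ into $b$ blocks as above, the quotient $S$ has $\#S = b$, hence contributes $k^{b-1}$, while the blocks contribute $\prod_{i=1}^b \ell^{\#B_i - 1} = \ell^{\sum_i (\#B_i-1)} = \ell^{\#T - b}$, since $\sum_i \#B_i = \#T$. Thus every decomposition into $b$ blocks carries the weight $k^{b-1}\ell^{\#T-b}$. The blocks are exactly the connected components left after deleting the cut edges, so decompositions into $b$ blocks are in bijection with the $(b-1)$-element subsets of the $\#T-1$ edges of $T$; there are therefore $\binom{\#T-1}{b-1}$ of them. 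Summing over $b$ and setting $m = \#T-1$ gives
\begin{equation*}
  \sum_{b=1}^{\#T} \binom{\#T-1}{b-1}\, k^{b-1}\ell^{\#T-b}
  = \sum_{j=0}^{m}\binom{m}{j} k^{j}\ell^{m-j}
  = (k+\ell)^{\#T-1},
\end{equation*}
which is precisely the coefficient of $T$ in $\sumt_{k+\ell}$. This would establish the identity $\sumt_k \circ \sumt_\ell = \sumt_{k+\ell}$.

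The step that requires genuine care, rather than routine bookkeeping, is the passage between the $1/\aut_T$-normalised coefficients used to define the series and the clean count of edge-subsets above: one must check that the automorphism factors attached to $S$ and to the blocks recombine exactly into the automorphism factor of $T$, with no spurious constants left over. The most transparent route is to run the whole computation in the labelled (species) picture, where the operadic composition carries no automorphism denominators and the edge-subset bijection is manifest, and only then to descend to unlabelled coefficients; alternatively, one invokes the rooted-operad / rooted-monoid formalism of the appendix, which is designed precisely to turn such combinatorial bijections into identities of series. Once the identity is in hand, the remaining assertions are immediate: taking $k,\ell$ positive integers and inducting shows $\sumt_k = \sumt_1^{\circ k}$, while taking $\ell = -k$ gives $\sumt_k \circ \sumt_{-k} = \sumt_0 = \arb{0}$, so that $\sumt_{-k}$ is the inverse of $\sumt_k$.
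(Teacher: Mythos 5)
Your argument is correct, and it rests on the same combinatorial core as the paper's proof, namely the description of the Pre-Lie composition $f\circ g$ as a sum over ways of cutting a tree $T$ into connected blocks (equivalently, over subsets of its edge set), with the quotient tree fed to $f$ and the blocks fed to $g$. Where you diverge is in how that decomposition is exploited. The paper first reduces to positive integers $k,\ell$ by polynomiality, then realises $\sumt_k$, $\sumt_\ell$, $\sumt_{k+\ell}$ as the series $s_A$, $s_B$, $s_C$ of three species of edge-coloured trees (colours in $\mathsf{K}$, $\mathsf{L}$, $\mathsf{K}\sqcup\mathsf{L}$) and exhibits the bijection required by hypothesis $H_\sharp(A,B,C)$: a $\mathsf{K}\sqcup\mathsf{L}$-colouring of $T$ determines its $\mathsf{L}$-coloured blocks and a $\mathsf{K}$-coloured quotient. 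Your proof is the numerical shadow of that bijection: you keep $k,\ell$ formal, weight a decomposition into $b$ blocks by $k^{b-1}\ell^{\#T-b}$, count such decompositions by $\binom{\#T-1}{b-1}$, and conclude by the binomial theorem, thereby avoiding both the polynomiality reduction and the colouring trick. What the paper's route buys is that the only thing ever asserted about the group law is the general Proposition \ref{circ_prop} of the appendix, which absorbs all the $\aut$ bookkeeping once and for all; what your route buys is a self-contained closed-form computation valid directly for formal variables. The one point you leave informal --- that the $1/\aut$-normalised coefficients of $f\circ g$ really are given by the clean sum over edge subsets, with the automorphism factors of the quotient and of the blocks recombining into $\aut(T)$ --- is genuine and is exactly what the proof of Proposition \ref{diam_prop} establishes (specialised to $B=C$, and using that the relevant structure constants of the Pre-Lie operad are $0$ or $1$); you correctly identify this as the delicate step and point to the right tool, so I would only ask that the final write-up actually invoke Proposition \ref{circ_prop} there rather than gesture at it.
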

\begin{proof}
  It is enough to prove this identity for $k$ and $\ell$ positive
  integers, by polynomiality.

  Let $\mathsf{K}$ and $\mathsf{L}$ be finite sets of cardinality $k$
  and $\ell$. Elements of this sets are considered as colors.

  One applies proposition \ref{circ_prop} for the rooted-operad
  $\prelie$, with $A$ the species of rooted trees with edges colored
  by elements of $\mathsf{K}$, $B$ the species of rooted trees with
  edges colored by elements of $\mathsf{L}$ and $C$ the species of
  rooted trees with edges colored by elements of $\mathsf{K}\sqcup
  \mathsf{L}$. The series $s_A$, $s_B$ and $s_C$ are clearly just
  $\sumt_{k}$, $\sumt_{\ell}$ and $\sumt_{k+\ell}$.

  The necessary bijection (\textbf{hypothesis} $H_\sharp(A,B,C)$) is
  obtained as follows. Pick any rooted tree $T$ with edges colored by
  $\mathsf{K}\sqcup \mathsf{L}$. One considers the connected
  components in $T$ with respect to the edges with color in
  $\mathsf{L}$. Each connected component is a rooted tree. Collapsing
  every connected component to a point, one obtains a rooted tree
  $\tau$ with edges colored by $\mathsf{K}$. To recover the original
  rooted tree $T$, one has to know how to glue back the connected
  components into $\tau$. The different ways to do that are exactly
  counted by a constant of structure of the global composition map of
  the Pre-Lie operad.
\end{proof}

The suspension $\Sigma$ is defined by
\begin{equation}
  \Sigma \big{(} \sum_{n \geq 1}a_n \big{)} = \sum_{n \geq 1} (-1)^{n-1} a_n,
\end{equation}
where $a_n$ is homogeneous of degree $n$.

\section{Combinatorics of flows}

\subsection{Definition}

\begin{figure}
  \centering
  \includegraphics[height=4cm]{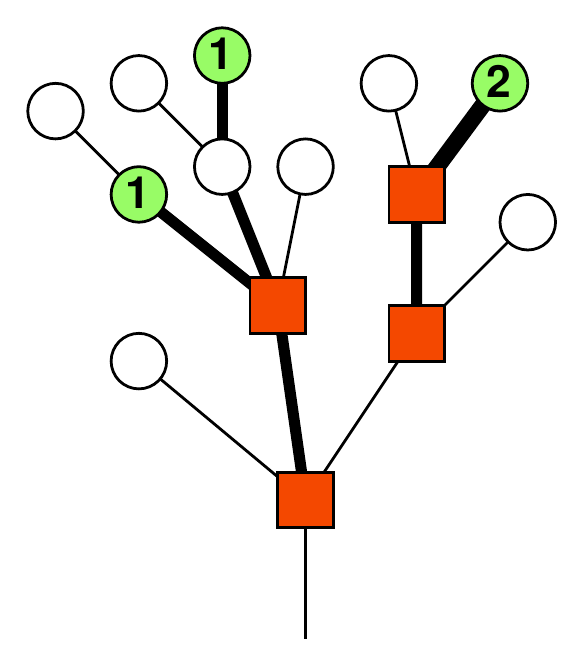}\includegraphics[height=4cm]{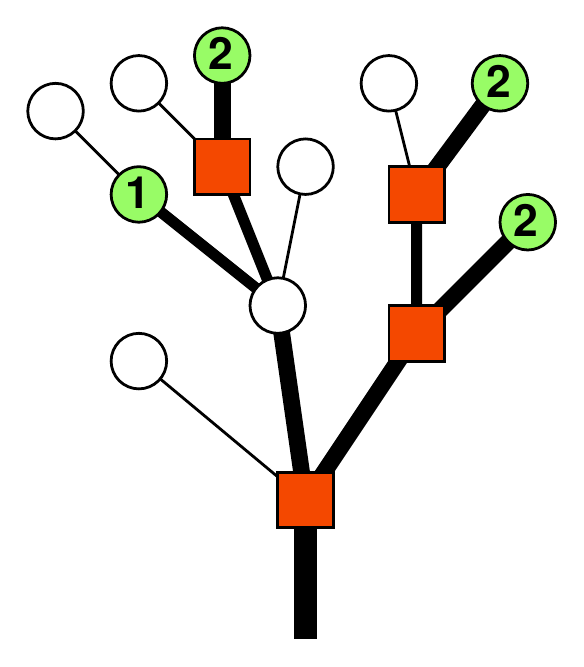}
  \caption{Two flows of size $4$, on the same rooted tree with $14$
    vertices. Only the left one is closed.}
  \label{fig:flow14}
\end{figure}

Let $T$ be a rooted tree. We will call a \textbf{flow} on $T$ of size
$k$ the data of 
\begin{itemize}
\item $k$ distinct vertices of $T$ (\textbf{outputs}),
\item vertices of $T$ (\textbf{inputs}), distinct from outputs,
  and that can be taken with multiplicities,
\end{itemize}
that has to satisfy the condition that we will introduce next.

Given inputs and $k$ outputs as above, one can define a \textbf{rate}
in $\ZZ$ on every edge of $T$ as follows.
\begin{itemize}
\item If the vertex $v$ is neither an input nor an output, the sum of
incoming rates in $v$ is equal to the outgoing rate of $v$.
\item If the vertex $v$ is an input with multiplicity $\ell$, the
  outgoing rate of $v$ is the sum of incoming rates in $v$ plus
  $\ell$.
\item If the vertex $v$ is an output, the
  outgoing rate of $v$ is the sum of incoming rates in $v$ minus $1$.
\end{itemize}
The main requirement is that \textit{all rates are in $\NN$}.

Note that, by convention, the incoming rate in leaves is $0$, but the
outgoing rate at the root (\textbf{exit rate}) can be an arbitrary
positive integer.

If the exit rate is $0$, the flow is \textbf{closed}.

This definition is illustrated in Figure \ref{fig:flow14}, where
outputs are depicted by red squares {\color{red}\rule{2mm}{2mm}} and
inputs by green circles {\color{green} $\bullet$} with their
multiplicity. The rates, between $0$ and $3$, are drawn with
increasing width.

\begin{lemma}
  A closed flow of size $k$ can also be described as
  \begin{itemize}
  \item $k$ distinct vertices of $T$ (outputs),
  \item $k$ vertices of $T$ (inputs), distinct from outputs, and that
    can be taken with multiplicities,
  \end{itemize}
  such that there exists $k$ decreasing paths from one input to an
  output that make a one-to-one matching of inputs with outputs.
\end{lemma}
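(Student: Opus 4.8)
The plan is to reinterpret the rate conditions as the data of a non-negative integer flow running down the tree, generated at the inputs and absorbed at the outputs, and then to decompose that flow into $k$ unit paths. The starting observation is that the rates are entirely determined by the positions of inputs and outputs. For a vertex $v$, let $T_v$ denote the branch above $v$, that is the set of vertices $w$ with $w \geq v$ in the poset order, including $v$ itself. By downward induction from the leaves, using the three defining rules, the outgoing rate of $v$ equals the number of input-units lying in $T_v$ minus the number of outputs lying in $T_v$. Taking $v$ to be the root, the exit rate equals the total number of input-units minus the number of outputs; hence a closed flow of size $k$ is precisely one whose input-units also number $k$.

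One implication is then immediate. Suppose we are given $k$ decreasing paths realising a one-to-one matching of the $k$ input-units with the $k$ outputs, and define the rate of each edge to be the number of these paths traversing it, a non-negative integer. A vertex which is neither an input nor an output is an interior point of the paths through it, so each such path contributes $1$ to both its incoming and its outgoing rate and conservation holds; an input of multiplicity $\ell$ is the origin of exactly $\ell$ paths, so its outgoing rate exceeds its incoming rate by $\ell$; and an output is the terminus of exactly one path, so its outgoing rate is one less than its incoming rate. By the formula of the previous paragraph the exit rate equals $k-k=0$, so this is a closed flow of size $k$.

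For the converse I argue by induction on $k$, the case $k=0$ being the empty flow. Given a closed flow of size $k \geq 1$, pick any output $o$; since its incoming rate is one more than its non-negative outgoing rate, some edge entering $o$ carries a positive rate, and I climb from $o$ along edges of positive rate, away from the root. Each edge just climbed is the outgoing edge of the vertex newly reached, which therefore has positive outgoing rate; if that vertex is not an input, its incoming rate is positive too, being equal to the outgoing rate at a conservation vertex and one larger at an output, so some edge from a child carries a positive rate and the climb goes on. As the tree is finite and a leaf that is not an input has outgoing rate $0$, the climb must halt at an input $i$, producing a decreasing path from $i$ to $o$ all of whose rates are at least $1$. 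I peel it off: I remove $o$ from the outputs, lower the multiplicity of $i$ by one, and lower by one the rate of each edge of the path. All rates remain in $\NN$; conservation survives at every interior vertex of the path, where one incoming and one outgoing rate drop together; the input and output conditions are restored at $i$ and at $o$; and should the path cross another output, its condition persists since its incoming and outgoing rates both fall by one. The outcome is a closed flow of size $k-1$, to which the induction hypothesis applies, and restoring the path from $i$ to $o$ produces the required matching.

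The only delicate point, and the crux of the argument, is this peeling step: one must verify both that climbing along positive rates is never stuck before reaching an input and that removing the path leaves a genuine closed flow, so that the induction can proceed. Both facts rest on the acyclicity of the tree, which forces all flow to travel monotonically towards the root, so that each output can only be fed from inputs lying above it.
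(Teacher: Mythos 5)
Your proof is correct and follows essentially the same route as the paper: the forward direction defines rates by counting paths through each edge, and the converse peels off one input-to-output path at a time by climbing along positive-rate edges and inducting on $k$. The extra details you supply (the formula for the outgoing rate at $v$ in terms of inputs and outputs above $v$, and the check that the climb cannot stall at a non-input leaf) are welcome but do not change the argument.
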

\begin{proof}
  Let us see why the data of a closed flow is equivalent to the
  existence of $k$ paths with the required properties.

  Given $k$ decreasing paths matching inputs with outputs, one can
  find the rate of an edge by counting how many paths go through this
  edge. This rate function on edges does satisfy all the desired
  properties, and defines a closed flow.

  Conversely, given a rate function on edges defining a closed flow,
  one can find paths, by induction on the size $k$. Let us pick an
  output and choose an increasing path of edges of strictly positive
  rate, until one reaches an input. This defines a path from the
  reached input to the chosen output. Removing this input and this
  output and subtracting $1$ to the rate function for every edge of
  this path, one find another admissible rate function with $k$
  decreased by $1$. Then by induction, one gets $k$ paths with the
  expected matching property.
\end{proof}

Let $\fl(T)$ be the set of flows on $T$ and $\fl(T,k,i)$ be the finite
set of flows of size $k\in \NN$ with exit rate $i\in\NN$.

\subsection{Properties of closed flows}

Let us give some simple properties of the definition of closed flows.

For every rooted tree $T$, there is exactly one closed flow of size
$0$, which is the empty flow, with no input vertex and no output
vertex, where every edge has rate $0$.

For a rooted tree $T$, closed flows of size $1$ are in bijection with
pairs of distinct comparable vertices of $T$. The number of closed
flows of size $1$ is therefore the sum of the heights of the vertices
of $T$.

\begin{lemma}
  \label{lemma_maxi}
  For a rooted tree $T$, the maximal size of a flow on $T$ is the
  number of non-leaf vertices of $T$. There always exist a closed flow
  having this exact size.
\end{lemma}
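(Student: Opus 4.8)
The plan is to prove the two assertions separately: first that the number of non-leaf vertices is an upper bound for the size of \emph{any} flow, and then that this bound is attained by some \emph{closed} flow.

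For the upper bound I would argue directly from the rate conditions. The size of a flow is its number of outputs, and these are pairwise distinct vertices, so it suffices to show that every output is a non-leaf vertex. If $v$ is an output, its defining condition says that the outgoing rate equals the incoming rate minus $1$; since all rates lie in $\NN$, the incoming rate at $v$ must be at least $1$. But the incoming rate is the sum of the rates on the edges coming from the children of $v$, and by convention this sum is $0$ when $v$ is a leaf. Hence $v$ has at least one child, i.e. $v$ is not a leaf. As the outputs are distinct, the size is bounded above by the number of non-leaf vertices. This argument uses only the output condition, so it applies to arbitrary flows, closed or not, which is exactly what is needed since the statement concerns the maximal size over all flows.

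For the existence of a closed flow of exactly this size I would invoke the characterisation of closed flows by decreasing paths established in the preceding lemma. Take as outputs all the non-leaf vertices of $T$; there are as many of them as the target size. For each non-leaf vertex $v$, choose any leaf $\ell_v$ lying in the subtree rooted at $v$ (such a leaf exists, since $v$ has a child and following a chain of children eventually reaches a leaf), and let the associated path be the decreasing path running from $\ell_v$ down to $v$. This produces a one-to-one matching of the outputs with a family of inputs, the latter being leaves taken with multiplicities whenever several chosen paths start at the same leaf. Since every input is a leaf and every output is a non-leaf, inputs and outputs are automatically distinct, so the hypotheses of the path characterisation are satisfied and these paths define a genuine closed flow whose size is the number of non-leaf vertices.

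Combining the two parts yields the claim. The only points requiring care are that every input is distinct from every output, which holds here because inputs are leaves and outputs are not, and that the rate function takes values in $\NN$, which is immediate from the path description since each rate merely counts the paths traversing the corresponding edge. I do not anticipate a real obstacle: the argument is purely combinatorial, and the preceding lemma carries out the translation of the chosen path family into a valid closed flow.
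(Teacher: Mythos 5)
Your proof is correct and follows essentially the same route as the paper: the upper bound comes from observing that an output must receive incoming rate at least $1$ and hence cannot be a leaf, and the extremal closed flow is built by making every non-leaf vertex an output and routing a decreasing path to it from a leaf of its subtree, with inputs placed on those leaves. The only cosmetic difference is that you justify the construction by explicitly invoking the path characterisation of closed flows from the preceding lemma, where the paper builds the rates directly by going upwards in the tree; both amount to the same flow.
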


\begin{proof}
  Indeed, any output must be a non-leaf vertex, because it has to be
  smaller than an input vertex. Conversely, one can find a closed flow
  of this size by putting an output on every non-leaf vertex and
  inputs on leaves as follows. Going upwards in the tree, one can
  choose at each output where the incoming flow should come from,
  until one reaches leaves.
\end{proof}

\subsection{Small flows}

Let us say that a flow $\psi \in \fl(T)$ is \textbf{small} if the root
is neither an output nor an input.

If the flow is closed, it is equivalent to require that the rate of
every edge incoming in the root of $T$ is $0$.

Denote by $\fl^{s}(T)$ the set of small flows on $T$.

\begin{lemma}
  \label{relation_produit_D_E}
  If $T=B_+(T_1,\dots,T_k)$, there is a bijection
  \begin{equation}
    \fl^{s}(T) \simeq \prod_{i=1}^{k} \fl(T_i),
  \end{equation}
  where the factors are given by restriction of the flow to subtrees.
\end{lemma}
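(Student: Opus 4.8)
The plan is to exhibit the bijection explicitly, as ``cut at the root'' in one direction and ``reassemble at a new root'' in the other, and then to check that the rate axioms are respected throughout. First I would record the combinatorial structure of $T=B_+(T_1,\dots,T_k)$: writing $r$ for the new root and $r_1,\dots,r_k$ for the roots of $T_1,\dots,T_k$, the vertex set of $T$ is the disjoint union of $\{r\}$ with the vertex sets of the $T_i$, while the edges of $T$ are the edges of the $T_i$ together with the $k$ edges $e_i$ joining $r_i$ to $r$. The edge $e_i$ is the exit edge of the subtree $T_i$, so its rate in a flow on $T$ should be identified with the exit rate of the induced flow on $T_i$.

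Next I would define the restriction map $\fl^{s}(T)\to\prod_{i=1}^k\fl(T_i)$. Given a small flow $\psi$, the root $r$ is by hypothesis neither an input nor an output, so every input and every output of $\psi$ lies in one of the subtrees; assigning to $T_i$ those inputs and outputs that fall in its vertex set, and keeping the rate function on the edges of $T_i$ unchanged, produces a candidate flow $\psi_i$ on $T_i$ whose exit rate is the $\psi$-rate of $e_i$. Since the vertices of the various $T_i$ are pairwise disjoint and disjoint from $r$, the distinctness requirements for $\psi$ (outputs distinct, inputs distinct from outputs, all rates in $\NN$) translate verbatim into the corresponding requirements for each $\psi_i$, so indeed $\psi_i\in\fl(T_i)$.

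For the inverse I would glue: given $(\psi_1,\dots,\psi_k)\in\prod_i\fl(T_i)$, declare the inputs and outputs of $\psi$ to be the disjoint unions of those of the $\psi_i$, leave $r$ unoccupied, use the rate of each $\psi_i$ on the edges of $T_i$, and set the rate of $e_i$ equal to the exit rate of $\psi_i$. The only rate axiom that is not immediate is the balance condition at $r$: because $r$ is neither input nor output, its outgoing (exit) rate must equal the sum of its incoming rates, namely the sum of the exit rates of the $\psi_i$, and this is precisely how the exit rate of $\psi$ is determined. Every other vertex keeps exactly the local rate equation it carried inside its subtree, so $\psi$ is a genuine small flow.

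Finally I would check that the two constructions are mutually inverse: restricting a glued flow recovers the original tuple because the rate on $e_i$ was defined to be the exit rate of $\psi_i$, and gluing a restricted flow recovers $\psi$ because smallness forces the exit rate at $r$ to be the sum of the subtree exit rates, with no data lost at $r$. The one point deserving genuine care—and the main thing to get right—is the bookkeeping of rates along the cut edges $e_i$: one must verify that the exit rate of a flow on a subtree is exactly the quantity that the balance condition at $r$ allows to be summed freely, which is precisely what the smallness hypothesis guarantees.
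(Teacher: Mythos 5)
Your proof is correct, and it spells out exactly the cut-and-glue argument that the paper leaves implicit (the lemma is stated there without proof, as an immediate consequence of the definitions). Your careful bookkeeping of the rates on the edges $e_i$ and of the balance condition at the root is precisely the point that makes the bijection work, and it also records the compatibility with size and exit rate that the paper later invokes in Lemma~\ref{lemme_d_exp_e}.
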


\subsection{Inductive description of flows}

Let $\xE_{T,t}$ be the generating function of flows on $T$ with respect to size and exit rate:
\begin{equation}
  \xE_{T,t} = \sum_{k,i \geq 0} \sum_{\psi \in \fl(T,k,i)} b^{k} t^i,
\end{equation}
and let $\xD_{T,t}$ be the similar generating function of small flows on $T$:
\begin{equation}
  \xD_{T,t} = \sum_{k,i \geq 0} \sum_{\psi \in \fl^{s}(T,k,i)} b^{k} t^i.
\end{equation}

Recall that Lemma \ref{lemma_maxi} says in particular that the size of
a flow on $T$ is bounded by the number of non-leaf vertices of
$T$. Therefore the generating functions $\xE_{T,t}$ and $\xD_{T,t}$ are
polynomials in $b$ with coefficients that are formal power series in
$t$. We will see later that they are in fact polynomials in $b$ with
coefficients that are rational functions in $t$.

By Lemma \ref{lemma_maxi}, the degree of $\xE_T$ as a polynomial in
$b$ is exactly the number of non-leaf vertices of $T$. The constant
term of $\xE_T$ with respect to $b$ is ${1}/{(1-t)}^{\# T}$, because a
flow without outputs is just the choice of how many inputs there are
at every vertex.

For example, when $T$ is the fork $\fork_{2,2}$, one gets
\begin{equation*}
  \xE_{T,t}=\frac{ 1+ 5 b + 3 b^{2} - t (9 b + 8 b^{2}  )+ t^2 ( 5 b + 7 b^{2} ) - t^3 (b + 2 b^{2}  ) }{(1-t)^4}.
\end{equation*}

We will see later how to compute this by induction.

We will use the general convention that the value at $t=0$ of a series
denoted by a symbol with index $t$ will be denoted by the same symbol
without index $t$. For instance, let $\xE_{T}$ and $\xD_{T}$ be the
value at $t=0$ of $\xE_{T,t}$ and $\xD_{T,t}$.

\begin{lemma}
  \label{lemme_d_exp_e}
  One has $\xD_{B_+(T_1,\dots,T_k),t}=\prod_{i=1}^k\xE_{T_i,t}$.
\end{lemma}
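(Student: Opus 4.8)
The plan is to upgrade the bijection of Lemma \ref{relation_produit_D_E} into an equality of generating functions, by checking that it respects both statistics, the one tracked by $b$ (the size) and the one tracked by $t$ (the exit rate). Write $T=B_+(T_1,\dots,T_k)$ with root $r$, and let $r_i$ denote the root of $T_i$, so that the edges incoming to $r$ are exactly the $k$ edges $r_i\to r$. Given a small flow $\psi\in\fl^s(T)$, the lemma produces flows $\psi_i\in\fl(T_i)$ by restriction. First I would observe that size is additive: since $\psi$ is small, the root $r$ is not an output, so every output of $\psi$ lies in one of the subtrees, and the outputs of $\psi$ form the disjoint union of the outputs of the $\psi_i$. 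Hence the exponent of $b$ satisfies $\#\psi=\sum_{i=1}^k\#\psi_i$.

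Next I would check that the exit rate is additive. The rates assigned to edges inside a subtree $T_i$ by $\psi$ agree with those assigned by the restricted flow $\psi_i$, because the flow-conservation rules at a vertex depend only on its status (input, output, or neither) and on its incoming rates, all of which are intrinsic to $T_i$. In particular the rate on the edge $r_i\to r$ coincides with the exit rate of $\psi_i$ in $T_i$. Since $r$ is neither an input nor an output, flow conservation at $r$ forces its outgoing rate, namely the exit rate of $\psi$, to equal the sum of its incoming rates, that is the sum over $i$ of the exit rates of the $\psi_i$. Thus the exponent of $t$ is also additive across the factors.

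Combining the two additivities, the monomial weight of $\psi$ factors as a product of the weights of the $\psi_i$. Summing over $\fl^s(T)$ and using the bijection with $\prod_{i=1}^k\fl(T_i)$ turns this sum of products into a product of sums, yielding exactly $\prod_{i=1}^k\xE_{T_i,t}$. The only delicate point, and the step I expect to require the most care, is the verification that the edge $r_i\to r$ carries precisely the exit rate of $\psi_i$; once that identification of rates is in hand, the remaining bookkeeping of the two weights is routine.
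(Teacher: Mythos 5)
Your proof is correct and follows the same route as the paper, which simply invokes the bijection of Lemma \ref{relation_produit_D_E} together with "its simple behaviour with respect to size and exit rate"; you have merely made explicit the two additivity checks (size and exit rate at the root) that the paper leaves to the reader.
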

\begin{proof}
  This follows from the bijection of Lemma \ref{relation_produit_D_E},
  and its simple behaviour with respect to size and exit rate.
\end{proof}

We will now proceed to give an inductive description of the series
$\xE_{T,t}$ and $\xD_{T,t}$.

Let $T$ be a tree and $v \to u$ be an edge of $T$, with $u$ closer to
the root. Let $T\pl_v w$ be the tree obtained by adding a new vertex
$w$ on top of $v$. Let $T\pl_u w$ be the tree obtained by adding a new
vertex $w$ on top of $u$. Let $S$ and $T_1,\dots,T_k$ be the trees
obtained from $T$ by removing the edges incoming in $v$. Here $S$ is
the bottom tree (containing the root of $T$) and $T_1,\dots,T_k$ are
the top trees. This is illustrated in figure \ref{fig:pullup}.

\begin{figure}
  \centering
  \includegraphics[height=4cm]{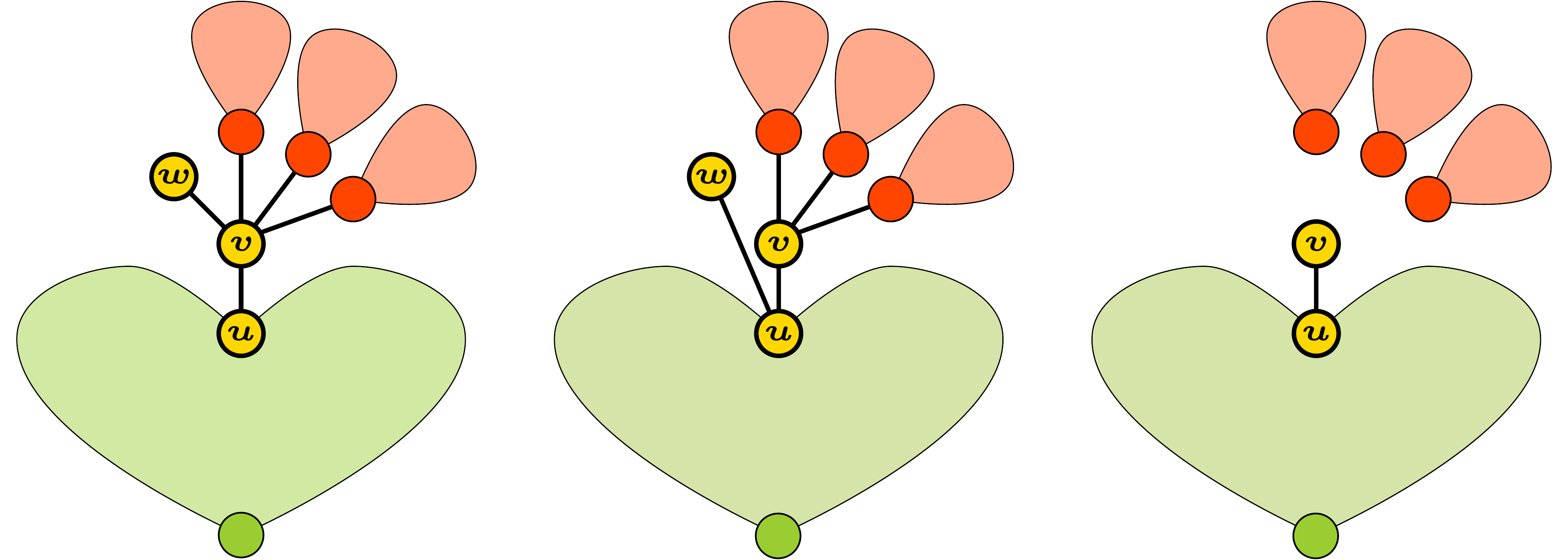}
  \caption{From left to right: $T\pl_v w$, $T\pl_u w$ and $S$ under $T_1,\dots,T_k$.}
  \label{fig:pullup}
\end{figure}

\begin{theorem}
  \label{main}
  With the previous notations, one has the following equalities:
  \begin{equation}
    \label{pullE}
    \xE_{T\pl_v w,t} = \xE_{T\pl_u w,t} + b\, \xE_{S,t} \prod_{i=1}^{k} \xE_{T_i},
  \end{equation}
  and
  \begin{equation}
    \label{pullD}
    \xD_{T\pl_v w,t} = \xD_{T\pl_u w,t} + b \,\xD_{S,t} \prod_{i=1}^{k} \xE_{T_i}.
  \end{equation}
\end{theorem}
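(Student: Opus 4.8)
The plan is to prove both identities by a single combinatorial argument comparing flows on $T\pl_v w$ with flows on $T\pl_u w$, and then to specialise to small flows for \eqref{pullD}. Observe first that in either tree the new vertex $w$ is a leaf, hence can never be an output: a leaf has incoming rate $0$, so an output there would have outgoing rate $-1$, violating the positivity requirement. Thus $w$ is an input of some multiplicity $m\geq 0$, and the unique edge meeting $w$ carries rate exactly $m$.

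First I would build a weight-preserving injection $\fl(T\pl_u w)\hookrightarrow\fl(T\pl_v w)$ by \emph{moving $w$ from $u$ up to $v$}, keeping all other inputs and outputs and keeping the multiplicity $m$ of $w$. The only edge whose rate changes is $v\to u$: reattaching $w$ to $v$ adds $m$ to that rate, so the result is always admissible. This map fixes the set of outputs and the multiplicities of all inputs, hence preserves both the size (the exponent of $b$) and the exit rate (the exponent of $t$); it is injective because a flow is determined by its input/output data, which the map records faithfully. Consequently $\xE_{T\pl_v w,t}-\xE_{T\pl_u w,t}$ is the generating function of the flows on $T\pl_v w$ lying outside the image.

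Next I would identify this complement. A flow on $T\pl_v w$ lies in the image exactly when one may push $w$ back down to $u$, i.e.\ when subtracting $m$ from the rate of $v\to u$ keeps it in $\NN$. Writing out the rate balance at $v$ shows that this fails precisely when $v$ is an output, the total rate entering $v$ from the top trees $T_1,\dots,T_k$ is $0$, and $w$ supplies all of $v$'s incoming rate $m\geq 1$. The vanishing of the incoming rate from the $T_i$ means that the restriction of the flow to each $T_i$ has exit rate $0$, i.e.\ is a \emph{closed} flow, contributing $\prod_{i=1}^{k}\xE_{T_i}$. The output at $v$ contributes one unit to the size, i.e.\ a factor $b$. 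Finally, forgetting $w$ and re-reading $v$ as a leaf-input of multiplicity $m-1\geq 0$ turns the rest of the configuration into an arbitrary flow on $S$ with the same downward rate out of $v$, hence the same exit rate; summing over $m\geq 1$, equivalently over all multiplicities $m-1\geq 0$ of the leaf $v$ in $S$, yields exactly $\xE_{S,t}$. Matching the $b$- and $t$-weights through this decomposition gives \eqref{pullE}.

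The small-flow statement \eqref{pullD} follows by running the identical argument inside $\fl^{s}$. Moving $w$ never touches the root, so the injection restricts to small flows, and smallness of a flow on $T\pl_v w$ constrains only its root, which is the root of $S$; the top trees $T_i$ are unaffected. Hence in the complement the factor coming from $S$ becomes $\xD_{S,t}$, while the top trees still contribute the unconstrained closed-flow series $\xE_{T_i}$, giving \eqref{pullD}. I expect the main obstacle to be the bookkeeping in the third step: pinning down exactly when the descent of $w$ fails, and checking that the reinterpretation of the output $v$ as a leaf-input preserves the exit rate while correctly splitting the power of $b$ across the three factors.
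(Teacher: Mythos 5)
Your proposal is correct and takes essentially the same route as the paper: the paper splits the flows on $T\pl_v w$ according to whether the rate $\alpha$ of $w\to v$ exceeds the rate $\beta$ of $v\to u$ (the case $\alpha=\beta+1$, i.e.\ $v$ an output fed entirely by $w$ with closed flows on the $T_i$, giving the term $b\,\xE_{S,t}\prod_i\xE_{T_i}$) or satisfies $\alpha\le\beta$, in which case $w$ is moved down to $u$. Your injection moving $w$ up from $u$ to $v$ is precisely the inverse of that second-case bijection, and your identification of its complement coincides with the paper's first case, so the two arguments agree.
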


\begin{proof}
  Let us prove the first equation.

  Let us consider a flow on the tree $T\pl_v w$. Let $\alpha$ be the
  rate of $w \rightarrow v$ and $\beta$ be the rate of $v \rightarrow
  u$. One can distinguish two cases.

  Either $\alpha=\beta+1$, in which case $v$ is an output, and all
  other edges incoming in $v$ have rate $0$. This kind of flow can be
  described in a bijective way using closed flows on the trees
  $T_1,\dots,T_k$ and one flow on the tree $S$. This gives the
  rightmost term.

  Otherwise $\alpha \leq \beta$. One can then define a flow on $T\pl_u
  w$ as follows. One moves down the end of the edge $w \to v$ which
  becomes an edge $w \to u$ and keep the rate $\alpha$. The rate of
  the edge $u-v$ is set to $\beta-\alpha$ and remains positive. This
  clearly defines a bijection, and one gets the leftmost term.

  Requiring in addition that the root is empty, the same proof
  gives the second identity.
\end{proof}

The simplest case of this induction is when $v$ is a leaf in $T$, in
which case the rightmost term has just the factor associated with $S$.

This theorem can be used to compute $\xE_{T,t}$ from smaller cases, by
choosing a leaf $w$ of height at least $2$. This is always possible,
unless $T$ is a corolla.

There is a nice commuting property to this induction. Indeed, one can
use it in several different ways to compute $\xE_{T,t}$, by choosing
different leaves. This happens first for trees with $5$ vertices.

One has the following consequence:
\begin{corollary}
  \label{plus_large}
  Let $T$ be a rooted tree. Then one has
  \begin{equation}
    \xE_{B_+(\pun,T_1,\dots,T_k),t} = \frac{1}{1-t}\left(\xE_{B_+(T_1,\dots,T_k),t} + b\, \prod_{i=1}^k \xE_{T_i}\right).
  \end{equation}
\end{corollary}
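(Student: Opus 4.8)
The plan is to mimic the proof of Theorem \ref{main}, but applied at the root of the tree rather than along an internal edge. Write $T'=B_+(\pun,T_1,\dots,T_k)$, let $r$ be its root and $w$ the new leaf $\pun$ sitting directly above $r$, so that $T'$ is obtained from $T''=B_+(T_1,\dots,T_k)$ by adding $w$ on top of $r$. Given a flow on $T'$, I would record the rate $\alpha\in\NN$ of the edge $w\to r$; since $w$ is a leaf it can only be an input (or neither), so $\alpha$ is exactly the input multiplicity at $w$ and contributes nothing to the size. Here the exit rate $i$ at the root plays the role of the rate $\beta$ of the edge $v\to u$ in Theorem \ref{main}, and the analysis splits into the same two cases according to whether $\alpha\le i$ or $\alpha=i+1$.

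In the first case $\alpha\le i$, deleting $w$ and decreasing the exit rate of $r$ from $i$ to $i-\alpha$ yields a genuine flow on $T''$ of the same size and exit rate $i-\alpha$; conversely, from a flow on $T''$ and any choice of $\alpha\ge 0$ one reconstructs such a flow on $T'$. This is a size-preserving bijection between these flows on $T'$ and pairs consisting of a flow on $T''$ together with an integer $\alpha\ge 0$, and summing $t^{\alpha}$ over $\alpha\ge 0$ produces the factor $\frac{1}{1-t}$, giving the contribution $\frac{1}{1-t}\,\xE_{T'',t}$. The remaining flows are exactly those for which removing $w$ would fail, which forces $r$ to be an output with all edges $\rt(T_i)\to r$ of rate $0$: then $r$ contributes the factor $b$ to the size, each $T_i$ carries a closed flow (exit rate $0$), and $w$ is an input of multiplicity $\alpha=i+1$. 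By the product behaviour of flows over subtrees (as in Lemma \ref{relation_produit_D_E} and Lemma \ref{lemme_d_exp_e}), the closed flows on the $T_i$ contribute $\prod_{i=1}^k\xE_{T_i}$, while summing over the exit rate $i=\alpha-1\ge 0$ again yields a factor $\frac{1}{1-t}$. Adding the two contributions gives the claimed identity.

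The step I expect to require the most care is the case boundary and its bookkeeping: one must check that $\alpha\le i$ is precisely the condition under which $w$ can be removed to leave a valid flow on $T''$, that its complement is exactly the configuration ``$r$ is an output with vanishing incoming rates from the $T_i$'', and that in this complement the data decouples into independent closed flows on the subtrees together with a free choice of exit rate through $w$. Once this dichotomy is established, the two geometric summations over $\alpha$ are routine and reproduce the common prefactor $\frac{1}{1-t}$.
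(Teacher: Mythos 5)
Your argument is correct, but it takes a genuinely different route from the paper. The paper does no new combinatorics here: it applies the already-established identity \eqref{pullD} to the wrapped tree $B_+(B_+(\pun,T_1,\dots,T_k))$, with $S=\linear_2$, obtaining
\begin{equation*}
\xD_{B_+(B_+(\pun,T_1,\dots,T_k)),t}=\xD_{B_+(\pun,B_+(T_1,\dots,T_k)),t}+b\,\xD_{B_+(\pun),t}\prod_{i=1}^k\xE_{T_i},
\end{equation*}
and then converts every $\xD$ into products of $\xE$'s via Lemma \ref{lemme_d_exp_e}, the two $1/(1-t)$ factors arriving as $\xE_{\pun,t}$. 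You instead redo, at the root, the bijective analysis from the proof of Theorem \ref{main}, with the exit rate standing in for the rate $\beta$ of the edge below $v$; your dichotomy $\alpha\le i$ versus $\alpha=i+1$ is exactly the right one (one always has $\alpha\le i+1$, with equality precisely when the root is an output and all edges from the $T_j$ carry rate $0$), the deletion of $w$ in the first case is a valid size-preserving bijection onto pairs (flow on $T''$, $\alpha\ge 0$), and the two geometric sums over the input multiplicity at $w$ correctly produce the common prefactor $\frac{1}{1-t}$. What the paper's proof buys is brevity and the reuse of Theorem \ref{main}; what yours buys is a self-contained combinatorial explanation of where each $1/(1-t)$ comes from, at the cost of essentially repeating the case analysis already carried out in the proof of Theorem \ref{main}. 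Both are complete proofs.
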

\begin{proof}
  This follows from equation \eqref{pullD}. Indeed, one has
  \begin{equation*}
    \xD_{B_+(B_+(\pun,T_1,\dots,T_k)),t}=\xD_{B_+(\pun,B_+(T_1,\dots,T_k)),t}+b \xD_{B_+(\pun),t} \prod_{i=1}^k \xE_{T_i}.
  \end{equation*}
  One can then use Lemma \ref{lemme_d_exp_e}.
\end{proof}

Corollary \ref{plus_large} can be used to compute the coefficients
$\xE_{\corol_n,t}$ for corollas, by induction on $n$.

\begin{remark}
  When $t=0$, Theorem \ref{main} implies that the coefficients of
  $\xE_{T}$ (as a polynomial in $b$) grow when a leaf is pulled up, as
  the rightmost term of \eqref{pullE} has positive coefficients.
\end{remark}

\subsection{Properties of $\xE_T$}

\begin{lemma}
  \label{rational}
  For every rooted tree $T$, the series $\xE_{T,t}$ is a polynomial in
  $b$ of degree the number of non-leaf vertices of $T$, with
  coefficients that are rational functions in $t$, with poles only at
  $t=1$. The common denominator of $\xE_{T,t}$ is $(1-t)^{\# T}$.
\end{lemma}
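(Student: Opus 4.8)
The plan is to prove all three claims—the degree in $b$, the rationality in $t$, and the common denominator $(1-t)^{\#T}$—simultaneously by induction on the number of vertices $\#T$, using the inductive tools already established. I would organize the induction so that the main engine is Theorem \ref{main} together with Corollary \ref{plus_large}, and the base case is the single-vertex tree $\arb{0}$, for which a flow is just a choice of input multiplicity at the root, giving $\xE_{\arb{0},t}=1/(1-t)$. This already exhibits the correct denominator $(1-t)^1$ and degree $0$ in $b$ (the tree has no non-leaf vertices).

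For the inductive step I would split into two cases according to whether $T$ is a corolla. If $T=\corol_n=B_+(\pun,\dots,\pun)$ is a corolla with $n$ leaves, I apply Corollary \ref{plus_large} with all $T_i=\pun$; since each $\xE_{\pun,t}=1/(1-t)$ and the smaller corolla $\corol_{n-1}$ is covered by induction, the recurrence $\xE_{\corol_n,t}=\frac{1}{1-t}\left(\xE_{\corol_{n-1},t}+b\,(1-t)^{-(n-1)}\right)$ shows the denominator picks up exactly one more factor of $(1-t)$, yielding $(1-t)^{\#T}=(1-t)^{n+1}$, and that the $b$-degree rises by one (from $n-1$ non-leaf vertices to $n$—wait, $\corol_n$ has exactly one non-leaf vertex, the root). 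I would therefore track the $b$-degree via Lemma \ref{lemma_maxi} directly: the degree of $\xE_T$ in $b$ equals the number of non-leaf vertices, which is already asserted earlier in the excerpt as a consequence of the maximal-size statement, so this part of the claim is essentially inherited and I only need to confirm the recurrences do not produce cancellation in the top-degree term (they cannot, as the added term has positive coefficients by the Remark following Theorem \ref{main}).

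If $T$ is not a corolla, then by the observation after Theorem \ref{main} there is a leaf $w$ of height at least $2$, so $T=T'\pl_v w$ for a suitable tree $T'$ with $\#T'=\#T-1$ and an edge $v\to u$. Equation \eqref{pullE} then reads $\xE_{T,t}=\xE_{T'\pl_u w,t}+b\,\xE_{S,t}\prod_i\xE_{T_i}$. Here $T'\pl_u w$ and $T'\pl_v w$ have the same number of vertices, so I cannot directly induct on $\#T$ for the first term; instead I would induct on a secondary statistic such as the height of $w$ (moving $w$ down strictly decreases its height, and when its height drops to $1$ the configuration is governed by a smaller tree or by Lemma \ref{lemme_d_exp_e} and Corollary \ref{plus_large}). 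Both terms on the right then have denominator dividing $(1-t)^{\#T}$: the second term is a product whose factors $\xE_{S,t}$ and $\xE_{T_i,t}$ have denominators $(1-t)^{\#S}$ and $(1-t)^{\#T_i}$ by the main induction on vertex count, and since $\#S+\sum_i\#T_i=\#T-1<\#T$ the product is comfortably controlled; the first term is controlled by the secondary induction on height.

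The main obstacle will be making the secondary induction on the height of $w$ interact cleanly with the primary induction on $\#T$, and in particular verifying that the common denominator is \emph{exactly} $(1-t)^{\#T}$ rather than a proper divisor—i.e. that the pole at $t=1$ really has full order $\#T$ and does not accidentally cancel. To settle this I would examine the numerator of $\xE_{T,t}$ at $t=1$: the constant term of $\xE_{T}$ in $b$ is stated to be $1/(1-t)^{\#T}$, whose numerator is $1\neq 0$ at $t=1$, so the full-order pole is already present in the lowest-degree term in $b$ and cannot be cancelled. This observation, together with the fact that all the recurrences only add terms with denominators dividing $(1-t)^{\#T}$, pins down the denominator precisely and completes the proof.
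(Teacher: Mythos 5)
Your proposal is correct and follows essentially the same route as the paper: induction anchored at $\arb{0}$ and the corollas via Corollary \ref{plus_large}, the step via Theorem \ref{main} with a leaf pulled down (the paper inducts on the total sum of heights and picks a leaf of maximal height, which plays the same role as your secondary induction on the height of $w$), the exact $b$-degree from Lemma \ref{lemma_maxi}, and the exact pole order from the constant term $1/(1-t)^{\#T}$ in $b$.
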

\begin{proof}
  The polynomial behaviour with respect to $b$ follows from the upper
  bound on the number of outputs, given by the number of non-leaf
  vertices, see Lemma \ref{lemma_maxi}. There always exists at least
  one flow with outputs at every non-leaf vertex, for example by
  placing sufficiently many inputs in every leaf. Therefore the degree
  of the polynomial is the number of non-leaf vertices.

  Let us now show that the coefficients of this polynomial in $b$ are
  rational functions in $t$ with poles only at $t=1$ and of order at
  most the size of $T$. This is true for the rooted tree $\arb{0}$, as
  $\xE_{\pun,t}=1/(1-t)$. By corollary \ref{plus_large}, this is true
  for all corollas, by induction. One can then use induction on the
  sum of heights of the vertices and on the number of vertices. Let
  $T$ be a tree which is not a corolla, and let $w$ be a leaf of
  maximal height in $T$. Take $v$ to be the vertex under $w$ and $u$
  the vertex under $v$. Then one can apply Theorem \ref{main} to prove
  the induction step.

  It remains to show that the order of the pole at $1$ of $\xE_{T,t}$
  is exactly the size of $T$. This follows from the obvious fact that
  the constant term with respect to $b$ is exactly $1/(1-t)^{\# T}$.
\end{proof}

The same kind of properties holds for $\xD_{T,t}$, thanks to Lemma
\ref{lemme_d_exp_e} and the obvious initial conditions
$\xD_{{\corol_n},t}=1/(1-t)^{n}$.

\subsection{Connected flows}

Let us say that two vertices $u,v$ of $T$ are connected by the flow
$\psi$ on $T$ if every edge of the unique path from $u$ to $v$ does
have a strictly positive rate in $\psi$.

One can then define connected components with respect to the flow
$\psi$, namely sets of vertices connected by the flow $\psi$. Each
connected component with respect to a flow is a rooted tree.

A flow is called \textbf{connected} if it has exactly one connected component.

Let $\fl^{c}(T)$ be the set of connected flows on $T$.

\begin{lemma}
  \label{connexe_un}
  If a rooted tree $T$ admits a closed connected flow, its root has
  valency at most $1$.
\end{lemma}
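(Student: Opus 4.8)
The plan is to play the meaning of \emph{connectedness} against the balance equation at the root under the \emph{closedness} hypothesis; these two conditions pull in opposite directions at the root, and the valency bound drops out of a short case analysis.

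First I would unwind what a connected flow means edge by edge. Since the flow has a single connected component containing all the vertices of $T$, every pair of vertices is connected; in particular the two endpoints of each edge are connected, which by definition forces every edge of $T$ to carry a strictly positive rate. In particular every edge incoming in the root has rate at least $1$.

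Next I would use that the flow is closed, so that the exit rate at the root is $0$, and read off the balance equation there. Writing $\sigma$ for the sum of the incoming rates at the root, there are three cases according to the role of the root. If the root is neither an input nor an output, then $\sigma = 0$; combined with the positivity of every incoming rate this leaves no incoming edge, so the root has valency $0$. If the root is an input of multiplicity $\ell \geq 1$, the balance equation $\sigma + \ell = 0$ is impossible, since $\sigma \geq 0$ and $\ell \geq 1$. Finally, if the root is an output, the balance equation $\sigma - 1 = 0$ gives $\sigma = 1$; as each incoming edge already contributes a rate of at least $1$, there can be at most one such edge. In every case the valency of the root is at most $1$.

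I do not expect a genuine obstacle here: the only point to watch is to make explicit that connectedness is really equivalent to positivity of every edge-rate, via the edge-by-edge reading of the connection condition, after which the closed balance equation at the root does all the work. As a by-product one even sees that whenever $T$ has more than one vertex, the root must in fact be an output of valency exactly $1$, with its single incoming edge carrying rate $1$.
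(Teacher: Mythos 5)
Your proof is correct and follows essentially the same route as the paper: connectedness forces every edge incident to the root to carry rate at least $1$, and the closed balance equation at the root then allows at most one such edge. Your explicit three-way case analysis on the status of the root is just a slightly more detailed rendering of the paper's observation that the root must be an output accepting a rate of exactly $1$.
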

\begin{proof}
  The statement holds for the tree with one vertex. One can therefore
  assume that the tree $T$ is not the trivial tree $\arb{0}$. By connectedness,
  every edge incident to the root contributes at least $1$ to the
  total rate entering the root. By closure, the root is then
  necessarily an output, and it can only accept a rate of
  $1$. Therefore there is exactly one incident edge to the root.
\end{proof}

We will consider now the question of what rooted trees admit a
closed connected flow.

\subsection{Trees with a closed connected flow}

We will now give a description of the rooted trees that admit a closed
connected flow, using a function defined by Jean-Claude Arditti
\cite{arditti,arditti_cori} in relation to rooted trees with
Hamiltonian comparability graphs. One can note that these references
also use some kind of flows on rooted trees. To avoid possible
confusion, we will call this function the valor, which is not the
original terminology.

Let $T$ be a rooted tree. The \textbf{valor} $\valor(f)$ of a leaf $f$
is $1$. The valor $\valor(v)$ of a vertex $v$ is
\begin{equation}
  \max(1,-1+\sum_{s\to v} \valor(s)).
\end{equation}

\begin{lemma}
  The valor of the root of $T$ is the minimal value of the exit rate
  among all connected flows on $T$ with non-zero exit rate.
\end{lemma}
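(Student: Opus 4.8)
The plan is to prove a slightly stronger, pointwise statement by structural induction on the rooted tree, working upward from the leaves. For a vertex $v$, write $T_v$ for the subtree of $T$ with root $v$, and for a flow $\psi$ let $r_\psi(v)$ denote the outgoing rate at $v$, i.e.\ the rate of the edge leaving $v$ towards the root (at the global root this is the exit rate). The claim I would establish is: for every vertex $v$, the minimum of $r_\psi(v)$ over all connected flows $\psi$ on $T_v$ with positive exit rate equals $\valor(v)$. Applying this at the root gives the lemma. The one preliminary observation I need is that, for a tree with at least two vertices, a flow is connected precisely when every edge carries a strictly positive rate; consequently the restriction of a connected flow on $T$ to any subtree $T_v$ is again a connected flow, with exit rate $r_\psi(v) \geq 1$, which is what makes the induction go through.

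For the lower bound I would argue by induction that $r_\psi(v) \geq \valor(v)$ for every connected flow with positive exit rate. A leaf, or the one-vertex tree, is immediate since $r_\psi(v) \geq 1 = \valor(v)$. For an internal vertex $v$ with children $s_1, \dots, s_m$, the defining rate equations give $r_\psi(v) = \sum_{i} r_\psi(s_i) + c_v$, where $c_v = -1$ if $v$ is an output, $c_v = 0$ if $v$ is neither input nor output, and $c_v = \ell \geq 1$ if $v$ is an input of multiplicity $\ell$; in all cases $c_v \geq -1$. By the induction hypothesis applied to the restricted flows, $r_\psi(s_i) \geq \valor(s_i)$, so $r_\psi(v) \geq \sum_i \valor(s_i) - 1$. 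Combining this with the connectivity bound $r_\psi(v) \geq 1$ yields $r_\psi(v) \geq \max\bigl(1, -1 + \sum_i \valor(s_i)\bigr) = \valor(v)$.

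For achievability I would construct, bottom-up, a connected flow on $T_v$ realising $r_\psi(v) = \valor(v)$ by splicing together the minimal flows on the subtrees $T_{s_i}$ (which exist by induction and all have positive edge rates) and then choosing the role of $v$ to match the valor recursion: put a single input at each leaf; if $\sum_i \valor(s_i) \geq 2$ declare $v$ an output, so that $r_\psi(v) = -1 + \sum_i \valor(s_i) = \valor(v) \geq 1$; and if $\sum_i \valor(s_i) = 1$ (which forces $v$ to have a unique child of valor $1$) leave $v$ plain, so that $r_\psi(v) = 1 = \valor(v)$. The main point to check, and the only real subtlety, is precisely this case distinction inside the maximum: making $v$ an output lowers the rate by $1$, so it is admissible only when $\sum_i \valor(s_i) \geq 2$, since otherwise the edge above $v$ would drop to rate $0$ and destroy connectivity; this is exactly the phenomenon recorded by the $\max(1, \cdot)$ in the definition of $\valor$. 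It then remains to verify that the assembled data is a genuine flow (distinct outputs, all rates in $\NN$) and that every edge has positive rate, so that the flow is indeed connected; both are straightforward from the construction. Finally, I would note that the hypothesis of nonzero exit rate is essential: at the global root one is forbidden from using the output option that would bring the exit rate to $0$, which is what distinguishes this statement from the analysis of closed connected flows.
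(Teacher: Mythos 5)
Your proof is correct and follows essentially the same route as the paper: induction on the tree structure, showing that the minimal non-zero exit rate of a connected flow satisfies the same recursion as the valor, with the restriction-to-subtrees step justified by the fact that connectivity forces every edge rate to be positive. The only cosmetic difference is in the degenerate case $\sum_i \valor(s_i)=1$: the paper keeps the output at the root and adds one unit of flow along a root-to-leaf path, whereas you simply omit the output at $v$; both realise the minimal non-zero exit rate $1$.
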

\begin{proof}
  By induction on the size of the tree $T$. This is true for the tree
  $\arb{0}$, which has minimal non-zero exit rate $1$. Let
  $T=B_+(T_1,\dots,T_k)$. Then the minimal exit rate of a connected
  flow on $T$ is the sum of the minimal non-zero exit rates of
  $T_1,\dots,T_k$, minus $1$ corresponding to an output at the root of
  $T$. If this is at least $1$, this is the minimum non-zero exit
  rate. If this is zero, the minimum non-zero exit rate is $1$, and
  can be obtained by adding $1$ to the rate along the path from the
  root to any chosen leaf.

  This proves that the minimal non-zero exit rate satisfies the same
  recursion as the valor.
\end{proof}

\begin{proposition}
  A rooted tree $B_+(T)$ admits a closed connected flow if and only if
  the root of $T$ has valor $1$.
\end{proposition}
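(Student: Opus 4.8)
The plan is to reduce the existence of a closed connected flow on $B_+(T)$ to the existence of a connected flow on $T$ with exit rate exactly $1$, and then to invoke the preceding lemma relating the valor to minimal non-zero exit rates.

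First I would analyse the root $r$ of $B_+(T)$. Since $B_+(T)$ has exactly one edge incident to its root, namely $r' \to r$ where $r'$ is the root of $T$, connectedness forces this edge to carry a strictly positive rate $\rho$. Closure forces the exit rate at $r$ to vanish. Checking the three possible local conditions at $r$, the only consistent one is that $r$ is an output with $\rho = 1$: if $r$ were neither input nor output the exit rate would equal $\rho \geq 1$, and if $r$ were an input with multiplicity $\ell$ it would equal $\rho + \ell \geq 2$, both incompatible with closure; if $r$ is an output the exit rate is $\rho - 1$, which vanishes exactly when $\rho = 1$. Hence in any closed connected flow on $B_+(T)$ the edge $r' \to r$ has rate exactly $1$ and $r$ is an output.

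Next I would set up the correspondence with flows on $T$. Restricting a closed connected flow on $B_+(T)$ to the subtree $T$, by forgetting $r$ and the edge $r' \to r$, yields a flow on $T$ whose exit rate at $r'$ equals the rate of $r' \to r$, namely $1$; this restricted flow is still connected, since every edge of $T$ retains its strictly positive rate. Conversely, any connected flow on $T$ with exit rate $1$ extends uniquely to a closed connected flow on $B_+(T)$ by attaching the root $r$, setting the rate of $r' \to r$ to $1$, and declaring $r$ an output: the exit rate at $r$ is then $1 - 1 = 0$, the local condition at $r'$ is unchanged, and the new edge carries positive rate so connectedness is preserved. Thus closed connected flows on $B_+(T)$ are in bijection with connected flows on $T$ of exit rate exactly $1$.

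Finally I would apply the preceding lemma, which identifies the valor of the root $r'$ of $T$ with the minimal exit rate among all connected flows on $T$ of non-zero exit rate. Since exit rates are positive integers, a connected flow on $T$ of exit rate exactly $1$ exists if and only if this minimum equals $1$, that is, if and only if $\valor(r') = 1$; combined with the bijection above this yields the stated equivalence. I do not expect a genuine obstacle here. The only step requiring real care is the local analysis at the root, ensuring that closure together with connectedness pin down both the edge rate and the output status of $r$, and hence force the restricted flow on $T$ to have exit rate precisely $1$ rather than merely non-zero.
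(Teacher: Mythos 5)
Your proof is correct and follows essentially the same route as the paper: reduce closed connected flows on $B_+(T)$ to connected flows on $T$ with exit rate exactly $1$, then invoke the lemma identifying the valor of the root of $T$ with the minimal non-zero exit rate of a connected flow. You simply spell out in more detail the local analysis at the new root (forcing it to be an output and the unique incident edge to have rate $1$), which the paper leaves implicit.
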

\begin{proof}
  Using Lemma \ref{connexe_un}, the rooted tree $B_+(T)$ admits a
  closed connected flow if and only if the rooted tree $T$ admits a
  connected flow with exit rate $1$. By the previous lemma, this is
  equivalent to say that the valor of the root of $T$ is $1$.
\end{proof}

\subsection{Inductive description of connected flows}

Let us introduce a generating function for connected flows:
\begin{equation}
  \xE^c_{T,t} = \sum_{k,i \geq 0} \sum_{\psi \in \fl^{c}(T,k,i)} b^{k} t^i.
\end{equation}

By Lemma \ref{connexe_un}, a rooted tree (different from $\arb{0}$)
which admits a closed connected flow can be written $B_+(T)$. Let us
denote by $\xF_T$ the generating series of connected flows on $T$ with
exit rate $1$.

We will now obtain an inductive description of the coefficients $\xF_T$.

Let us consider the situation depicted in figure \ref{fig:pullupF},
with the same notations as for Theorem \ref{main}. The tree $S$ is
obtained from $T$ by removing everything above $v$. The tree $S'$ is
obtained as the subtree of $T\pl_v w$ with root $v$.

\begin{figure}
  \centering
  \includegraphics[height=4cm]{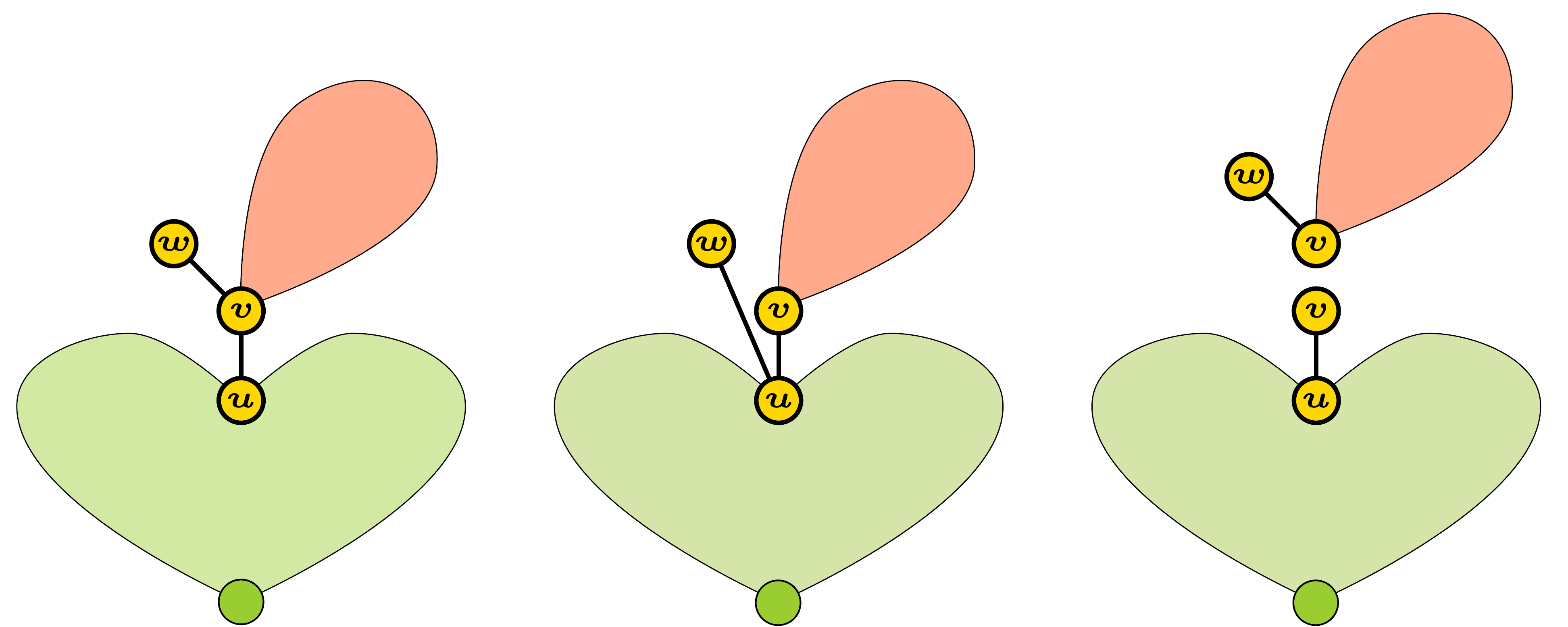}
  \caption{From left to right: $T\pl_v w$, $T\pl_u w$ and $S$ under $S'$.}
  \label{fig:pullupF}
\end{figure}

\begin{theorem}
  \label{mainF}
  With the previous notations, one has the following equalities:
  \begin{equation}
    \label{pullF}
    \xF_{T\pl_v w} = \xF_{T\pl_u w} + \xF_{S}\xF_{S'}.
  \end{equation}
\end{theorem}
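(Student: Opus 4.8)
The plan is to prove \eqref{pullF} by the same strategy as the proof of equation \eqref{pullE} in Theorem \ref{main}, adapting the bijection to the connected setting. The decisive simplification is that for a connected flow every edge of $T$ carries a strictly positive rate: if some edge had rate $0$, its two endpoints would lie in distinct connected components, contradicting connectedness. In particular, writing $\alpha$ for the rate of $w\to v$ and $\beta$ for the rate of $v\to u$ in a connected flow of exit rate $1$ on $T\pl_v w$, one has $\alpha\geq 1$ and $\beta\geq 1$. I would then split these flows according to the sign of $\beta-\alpha$, the case $\beta>\alpha$ producing the term $\xF_{T\pl_u w}$ and the case $\beta\leq\alpha$ producing the term $\xF_{S}\xF_{S'}$.

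First I would treat the case $\beta>\alpha$. Exactly as in Theorem \ref{main}, slide the upper end of the edge $w\to v$ down to $u$, obtaining an edge $w\to u$ of rate $\alpha$ and resetting the rate of $v\to u$ to $\beta-\alpha$. The key point is that $\beta-\alpha\geq 1$, so this edge stays positive and the resulting flow on $T\pl_u w$ is again connected, with unchanged exit rate $1$ and the same inputs and outputs. The inverse map slides $w$ back above $v$, adding $\alpha$ to the rate of $v\to u$, and is clearly a bijection onto the connected flows of exit rate $1$ on $T\pl_u w$ preserving the number of outputs. This contributes exactly $\xF_{T\pl_u w}$.

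For the case $\beta\leq\alpha$ I would decompose the flow at $v$. Here $v$ cannot be an input (an input at $v$ would force $\beta>\alpha$), so $v$ is either an output or neither. Subtract $\beta-1$ from the rate of the single edge $w\to v$ and from the multiplicity of the input at the leaf $w$; since $\alpha\geq\beta$ this leaves $w\to v$ with rate $\alpha-\beta+1\geq 1$, which keeps the flow connected and preserves the type of $v$ while lowering the outgoing rate at $v$ from $\beta$ to $1$. Cutting the edge $v\to u$ then yields a connected flow of exit rate $1$ on $S$ in which $v$ is an input of multiplicity $\beta$, together with a connected flow of exit rate $1$ on $S'$. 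The reverse map reads $\beta$ off as the multiplicity of the input $v$ in the $S$-factor, adds $\beta-1$ along $w\to v$ in the $S'$-factor, and glues the two flows at $v$; the two constructions are mutually inverse. Since every vertex other than $v$ lies in exactly one of $S$, $S'$, and the output status of $v$ is recorded in the $S'$-factor, the number of outputs is additive, so this case contributes $\xF_{S}\xF_{S'}$. When $v$ carries two or more subtrees in $T$, connectedness forces $\beta>\alpha$ and this case is empty, consistently with $\xF_{S'}=0$.

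Summing the two cases gives \eqref{pullF}. I expect the bulk of the work to be the routine verification that each map preserves connectedness and exit rate $1$ and that the two pairs of maps are mutually inverse bijections. The one genuinely delicate point, and the place where this argument departs from Theorem \ref{main}, is that the modification in the second case must be performed along the edge $w\to v$ specifically: this is what keeps every rate nonnegative, maintains connectedness, and pins down the value of $\beta$ used by the reverse map.
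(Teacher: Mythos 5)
Your proposal is correct and follows essentially the same route as the paper: the same dichotomy $\beta\geq\alpha+1$ versus $\beta\leq\alpha$, the same sliding of $w$ down to $u$ in the first case, and the same splitting at $v$ in the second case, with $v$ becoming an input of multiplicity $\beta$ in $S$ and the edge $w\to v$ reassigned the rate $\alpha-\beta+1$ in $S'$. Your additional checks (that $v$ cannot be an input when $\beta\leq\alpha$, and that the case is empty when $\xF_{S'}=0$) are correct refinements of the paper's argument.
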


\begin{proof}
  Let us consider a connected flow on $T \pl_v w$ with exit rate $1$.
  Let $\alpha\geq 1$ be the rate of the edge $w \to v$ and $\beta\geq
  1$ be the rate of the edge $v \to u$.

  If $\beta \geq \alpha+1$, then one can define a connected flow on $T
  \pl_u w$ with exit rate $1$ as follows. One replaces the edge $w\to
  v$ by an edge $w\to u$ with rate $\alpha$, and assign the rate
  $\beta-\alpha \geq 1$ to the edge $v \to u$. This is clearly a
  bijection, and gives the leftmost term.

  Otherwise, one has $\beta \leq \alpha$. One can then define a
  connected flow on $S'$ with exit rate $1$ and a connected flow on
  $S$ with exit rate $1$, as follows. On the bottom tree $S$, the
  vertex $v$ becomes an input with exit rate $\beta$, and all rates
  are unchanged. On the top tree $S'$, the vertex $v$ has the same
  content as the vertex $v$ of $T\pl_v w$, either input or output. One
  assigns to the edge $w \to v$ the rate $\alpha-\beta+1\geq 1$. One
  can check that the exit rate of this connected flow on $S'$ is
  $1$. This construction is clearly a bijection, and one obtains the
  rightmost term.
\end{proof}

For example, one can compute using this theorem that
$\xF_{\fork_{2,2}}$ is $2b(1+b)$.

\begin{corollary}
  For every rooted tree $T$ with $n$ vertices, the coefficient of
  $b^k$ and the coefficient of $b^{n-1-k}$ in $\xF_T$ are equal.
\end{corollary}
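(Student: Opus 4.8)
The plan is to prove, by induction, the cleaner statement that $\xF_T$ is a \emph{palindromic} polynomial in $b$ of width $\#T-1$: writing $\xF_T=\sum_k c_k b^k$, one has $c_k=c_{\#T-1-k}$ for all $k$, equivalently $b^{\#T-1}\,\xF_T(1/b)=\xF_T(b)$. The polynomiality in $b$ is already available, since the size of a flow is bounded; and the zero polynomial is palindromic of every width, which conveniently covers all trees $T$ admitting no connected flow of exit rate $1$. I would first record two elementary closure properties: the sum of two palindromic polynomials of the same width $m$ is palindromic of width $m$, and the product of a palindromic polynomial of width $p$ with one of width $q$ is palindromic of width $p+q$. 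Both follow from the change of summation index $i\mapsto p-i$.

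The engine is Theorem \ref{mainF}, namely $\xF_{T\pl_v w}=\xF_{T\pl_u w}+\xF_S\xF_{S'}$. The crucial bookkeeping input is the vertex count $\#S+\#S'=\#(T\pl_v w)+1$: indeed $S$ is obtained from the base tree by pruning the subtrees strictly above $v$ (keeping $v$), while $S'$ is the subtree rooted at $v$ together with $w$, so $S$ and $S'$ overlap exactly in the single vertex $v$ and together cover $T\pl_v w$. Granting the induction hypothesis, $\xF_S$ and $\xF_{S'}$ are palindromic of widths $\#S-1$ and $\#S'-1$, so by the product rule $\xF_S\xF_{S'}$ is palindromic of width $(\#S-1)+(\#S'-1)=\#S+\#S'-2=\#(T\pl_v w)-1$, exactly the width sought. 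Since $T\pl_u w$ has the same number of vertices as $T\pl_v w$, induction makes $\xF_{T\pl_u w}$ palindromic of that same width, and the sum rule then yields the claim for $\xF_{T\pl_v w}$.

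For well-foundedness I would order trees by $\#T$ and, for fixed $\#T$, by the sum of heights of the vertices, exactly as in the proof of Lemma \ref{rational}. Given a non-corolla $T\pl_v w$, choose $w$ a leaf of maximal height, $v$ its parent, $u$ the parent of $v$; then $T\pl_u w$ has the same vertex count but strictly smaller height sum, while $S$ and $S'$ each have strictly fewer vertices (as $v$ is not the root, $\#S\geq 2$ gives $\#S'<\#(T\pl_v w)$, and $\#S'\geq 2$ gives $\#S<\#(T\pl_v w)$). The base cases are the corollas, where Theorem \ref{mainF} does not apply: a short direct analysis gives $\xF_{\corol_0}=1$, $\xF_{\corol_1}=1+b$, $\xF_{\corol_2}=b$, and $\xF_{\corol_n}=0$ for $n\geq 3$ (every leaf of $\corol_n$ must be an input, forcing total input multiplicity at least $n$, while only the root can be an output). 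Each is palindromic of the correct width $\#\corol_n-1=n$.

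The main obstacle I anticipate is purely this width bookkeeping: one must check that the two summands of \eqref{pullF} are palindromic of the \emph{same} width and that this common width is $\#(T\pl_v w)-1$. Everything rests on the identity $\#S+\#S'=\#(T\pl_v w)+1$; once that is secured, the closure properties of palindromic polynomials make the inductive step automatic, and the corolla base cases are immediate.
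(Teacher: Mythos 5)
Your proof is correct and takes essentially the same route as the paper's, which likewise argues by induction on the size and the total height using \eqref{pullF}, with corollas as the base case (noting $\xF_{\corol_n}=0$ for $n\geq 3$). The paper only sketches this in two lines; your version supplies the missing width bookkeeping ($\#S+\#S'=\#(T\pl_v w)+1$) and the explicit corolla values, both of which check out.
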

\begin{proof}
  This is certainly true for small corollas by inspection, and
  $\xF_{\corol_n}$ vanishes if $n\geq 3$. Then one can proceed by
  induction on the size and the total height, using \eqref{pullF}.
\end{proof}

One may wonder whether this unexpected symmetry has a combinatorial
description.

\medskip

It appears that it may be possible to introduce a parameter $t$ in the
inductive definition \eqref{pullF}.

\begin{conjecture}
  \label{conjecture_F}
  We keep the same notations as for Theorem \ref{mainF}. There exists
  rational functions $\xF_{T,t}$, such that
  \begin{equation}
    \xF_{T\pl_v w,t} = \xF_{T\pl_u w,t} + (1-t) \xF_{S,t}\xF_{S',t},
  \end{equation}
  and such that 
  \begin{equation}
    \xF_{\linear(n),t}=\xE_{\linear(n),t} \quad \text{for}\quad n\geq 1,
  \end{equation}
  and
  \begin{equation}
    \xF_{\corol_n}=b(-t)^{n-2}/(1-t)^{n-1} \quad \text{for}\quad  n\geq 2.
  \end{equation}
  
\end{conjecture}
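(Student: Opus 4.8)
The ideal proof would be the combinatorial one hinted at in the introduction: to produce a family of weighted objects refining the connected flows of exit rate~$1$ counted by $\xF_T$, whose generating series in $b$ and $t$ is $\xF_{T,t}$, so that the deformed relation and the two normalisations all hold by inspection. The obstruction is already visible in the corolla values $\xF_{\corol_n,t}=b(-t)^{n-2}/(1-t)^{n-1}$: the sign $(-t)^{n-2}$ shows that no \emph{positive} count can be responsible, so a naive combinatorial model is ruled out. I would therefore instead read the proposed identity from left to right as a \emph{recursive definition}. It expresses $\xF_{T\pl_v w,t}$ through $\xF_{T\pl_u w,t}$, a tree with the same number of vertices but strictly smaller total height, and the product $(1-t)\xF_{S,t}\xF_{S',t}$ on trees with strictly fewer vertices. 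Since any non-corolla tree has a leaf of height at least~$2$, iterating this step lowers the total height until only corollas remain. I would take as base data the values $\xF_{\corol_n,t}$ for $n\ge 2$ from the stated formula, together with $\xF_{\pun,t}=\xE_{\pun,t}=1/(1-t)$ and $\xF_{\corol_1,t}=\xE_{\corol_1,t}$ supplied by the linear normalisation for $n\le 2$, and define $\xF_{T,t}$ by induction on the pair (number of vertices, total height). Rationality with poles only at $t=1$ and the expected denominator then follows by the same induction as in Lemma~\ref{rational}, since both the base values and the two operations preserve these features.

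The real content is \emph{well-definedness}: a tree may carry several leaves of height at least~$2$, and the recursion must return the same series no matter which one is pulled down. In the undeformed case Theorem~\ref{mainF} is a genuine identity between generating functions, so this coherence is free; here there is no such model and it must be established directly. I would run a Newman-type argument: termination is guaranteed by the well-founded measure above, so it suffices to prove local confluence, that is, that for any two admissible leaves $w_1,w_2$ the two one-step reductions can be completed to a common value. When $w_1$ and $w_2$ lie in disjoint parts of the tree the two pull-downs commute and the corrections $(1-t)\xF_{S_i,t}\xF_{S_i',t}$ enter symmetrically, so local confluence is immediate from the inductive hypothesis.

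The main obstacle is the \emph{nested} case, where $w_1$ and $w_2$ sit on a common path to the root, so that pulling one of them alters the very edge $v\to u$ used for the other; then the trees $S,S'$ produced by the two orders differ, and matching the two sides requires a nontrivial identity in which the factors $(1-t)$ must combine correctly against the signed corolla values. I expect this to reduce, after stripping common subtrees, to a finite list of minimal overlapping configurations, each checkable by direct computation from the relation and the corolla formula; completing this check is exactly the point left open. A structural alternative, and the one suggested by the appendix, is to assemble the $\xF_{T,t}/\aut(T)$ into a single series $\xF_t\in\gp_{\prelie}$ and to realise the pull-down as a functional equation in this group via the rooted-operad formalism, so that consistency becomes a consequence of the operad axioms rather than of a case analysis. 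Either way, the hard step is to secure this coherence \emph{without} a positive combinatorial interpretation.

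Once $\xF_{T,t}$ is known to be well defined, only the normalisations remain. The corolla values hold by construction, and the linear identity $\xF_{\linear(n),t}=\xE_{\linear(n),t}$ I would prove by induction on $n$. Specialising the deformed relation to the top leaf of $\linear(n)$ gives $\xF_{\linear(n),t}=\xF_{\fork_{n-2,2},t}+(1-t)\xF_{\linear(n-1),t}\xF_{\linear(2),t}$, while Theorem~\ref{main} applied to the same leaf gives $\xE_{\linear(n),t}=\xE_{\fork_{n-2,2},t}+b\,\xE_{\linear(n-1),t}$. Running these two recursions together in a simultaneous induction over linear trees and the forks $\fork_{n-2,2}$, and reconciling the two correction terms, yields the desired equality and completes the argument.
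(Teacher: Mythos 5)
The statement you are asked to prove is labelled a \emph{conjecture} in the paper, and the paper contains no proof of it: the author only observes that the recursion together with the corolla values determines the fractions $\xF_{T,t}$ \emph{uniquely, if they exist}, and the existence is precisely what is left open. Your proposal correctly reconstructs this situation. You read the identity as a recursive definition with base data the corolla values and $\xF_{\linear_1,t}$, $\xF_{\linear_2,t}$, you observe that the recursion terminates by a well-founded measure, and you identify the sole remaining issue as local confluence: a tree generally has several leaves of height at least $2$, and pulling them down in different orders must yield the same rational function. In the undeformed case of Theorem \ref{mainF} this coherence is automatic because both sides count connected flows of exit rate $1$; here, as you note, the sign $(-t)^{n-2}$ in $\xF_{\corol_n,t}$ rules out a positive combinatorial model, so the coherence must be proved directly, and you explicitly leave the nested overlapping case unresolved. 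That is a genuine gap, and it is exactly the gap that makes the statement a conjecture rather than a theorem in the paper; your Newman-type reduction to minimal overlapping configurations is a plausible strategy but is not carried out, and the disjoint case alone does not suffice since the nested case is where the two corrections interact nontrivially.

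Two smaller points. First, once the base data is taken to be the corolla formula, the linear normalisation $\xF_{\linear_n,t}=\xE_{\linear_n,t}$ for $n\geq 3$ is no longer a free normalisation but a further identity to be verified; your simultaneous induction over $\linear_n$ and $\fork_{n-2,2}$ is the right shape for this, but it is only sketched and itself presupposes that $\xF_{\fork_{n-2,2},t}$ is already well defined. Second, your suggested structural alternative, packaging the $\xF_{T,t}/\aut(T)$ into a series in $\gp_{\prelie}$ and deriving consistency from the rooted-operad formalism of the appendix, would require first exhibiting a functional equation characterising that series, which again amounts to the missing existence statement. So the proposal is an accurate and honest diagnosis of why the conjecture is hard, but it does not prove it, and neither does the paper.
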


It is easy to prove that this defines uniquely the fractions
$\xF_{T,t}$, if they exist.

For example, one gets that
\begin{equation*}
  \xF_{\fork_{2,2},t} = \frac{b}{(1-t)^2}+ \frac{b(1+2b)}{1-t}.
\end{equation*}

Looking at the first fractions $\xF_{T,t}$, one observes that they do
not have positive coefficients as formal power series in $t$ and $b$,
for example for the rooted tree $B_+(\corol_2,\arb{0},\arb{0})$. They
can therefore not be given a combinatorial description similar to the
one for $\xF_T$ in terms of connected flows with exit rate $1$.

\subsection{Flows on linear trees and Dyck paths}

\label{dyck}

Let us consider the case of the linear trees. We first show that
closed flows on linear trees are in bijection with very classical
objects, namely Dyck paths.

Recall that a \textbf{Dyck path} of length $2n$ is a plane lattice
path from $(0,0)$ to $(n,n)$ using steps $(0,1)$ (up) and $(1,0)$
(right) and keeping above the diagonal line $y=x$. A Dyck path of
length at least $2$ is called \textbf{indecomposable} if it only
touches the diagonal line at its extremities. Every Dyck path can by
uniquely written as the concatenation of indecomposable Dyck
paths. Every indecomposable Dyck path can be uniquely written
$(0,1)D(1,0)$ where $D$ is a Dyck path. A \textbf{peak} in a Dyck path
is a factor $(0,1)(1,0)$. We say that two letters $(0,1)$ and $(1,0)$
appearing in this order in a Dyck path are \textbf{matched} if the
factor between them is a Dyck path.

\begin{figure}
  \centering
  \includegraphics[height=3cm]{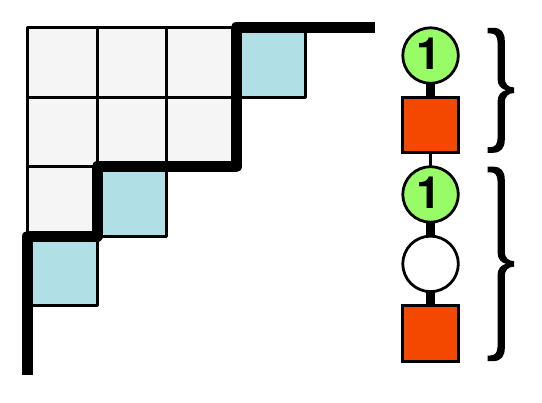}
  \caption{Bijection between Dyck paths and closed flows on linear
    trees. In this example, the flow has two connected components.}
  \label{fig:bijection}
\end{figure}

\begin{proposition}
  There exists a bijection $\rho$ between closed flows on $\linear_n$
  and Dyck paths of length $2n$ through which
  \begin{itemize}
  \item connected components correspond to indecomposable factors,
  \item outputs correspond to matched pairs of steps that do not form a
  peak.
  \end{itemize}
\end{proposition}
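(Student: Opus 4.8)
The plan is to construct the bijection $\rho$ explicitly and inductively, using the recursive structure of both Dyck paths and linear trees. A linear tree $\linear_n$ is a single path of $n$ vertices, so a flow on it is determined entirely by its rate function on the $n-1$ internal edges together with the exit rate (which is $0$ for a closed flow). Since every vertex has valency at most $1$, the local rate conditions reduce to: at each vertex the outgoing rate equals the incoming rate, plus the input multiplicity, minus $1$ if it is an output. I would read the tree from the root upward and encode the sequence of rates as a lattice path, letting each step record whether the rate increases or decreases as we ascend.

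More concretely, I would set up the correspondence so that the Dyck path, traversed from $(0,0)$ to $(n,n)$, tracks the rate profile of the flow. An up-step $(0,1)$ should correspond to passing a vertex that increases the outgoing rate (an input of appropriate multiplicity, creating or feeding flow), while a right-step $(1,0)$ corresponds to a vertex that decreases it (an output, which subtracts $1$). The requirement that all rates lie in $\NN$ is exactly the Dyck condition of staying weakly above the diagonal, and the closure condition (exit rate $0$) forces the path to end on the diagonal. **First I would** verify this encoding is well-defined and reversible on the nose, checking that the nonnegativity of rates matches staying above $y=x$ and that returning to the diagonal corresponds to rate returning to $0$.

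The two structural claims then become bookkeeping. For the connected-components statement, recall that a connected component is a maximal set of vertices joined by edges of strictly positive rate; the rate hits $0$ on an edge exactly when the lattice path touches the diagonal. Hence the indecomposable factors of the Dyck path---the maximal subpaths touching the diagonal only at their ends---correspond precisely to the maximal runs of positive rate, i.e. the connected components. For the output statement, I would identify an output with the step that subtracts $1$ from the rate, namely a right-step, and check that under the matching of $(0,1)$ with $(1,0)$ steps (bracketing by Dyck subfactors) an output corresponds to a matched pair; the exclusion of peaks $(0,1)(1,0)$ reflects that a vertex which is simultaneously the top of an immediately-created unit of flow and its immediate consumer is the degenerate case of a size-$1$ closed flow matching an input directly above an output with no intervening structure, which I would confirm is exactly the peak configuration one must discard.

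**The hard part will be** pinning down the exact combinatorial meaning of inputs versus outputs at each vertex and reconciling it with the up/right step assignment, since a vertex may carry input multiplicity \emph{and} be an output is forbidden, but the rate can jump by more than $1$ through multiplicities---so a single vertex may contribute several up-steps. I expect the cleanest route is to unfold multiplicities: replace a vertex of input multiplicity $\ell$ by $\ell$ consecutive up-steps, so that the path genuinely has $n$ up and $n$ right steps and each step is attached to a definite vertex-event. The delicate point is then showing this unfolding is still a bijection at the level of flows (not over-counting orderings among the up-steps at one vertex), and that the peak/non-peak dichotomy survives the unfolding. Once the step-by-step dictionary is fixed, both bulleted claims should follow by tracing the definitions, and I would present the verification by induction on $n$ using the decomposition of an indecomposable Dyck path as $(0,1)D(1,0)$, matching it to peeling off the topmost connected run of the flow.
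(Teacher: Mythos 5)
Your plan---to read the flow off as a lattice path whose height profile is the rate profile, with inputs giving up-steps and outputs giving right-steps---has a genuine gap at exactly the point you flag as ``the hard part,'' and the fix you propose does not close it. A closed flow of size $k$ on $\linear_n$ has $k$ outputs and total input multiplicity $k$, so unfolding multiplicities into individual up-steps produces a word with $k$ letters $(0,1)$ and $k$ letters $(1,0)$, i.e.\ a path of length $2k$, not $2n$; the vertices that are neither inputs nor outputs contribute nothing at all. In particular the empty flow would be sent to the empty path instead of to $\bigl((0,1)(1,0)\bigr)^n$. There is also a direction slip: traversing from the root upward, an input \emph{decreases} the rate above it (the flow it injects travels downward), so your step dictionary is inconsistent with your stated traversal. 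A direct encoding of this kind can in fact be made to work, but it requires a per-vertex contribution you have not identified: reading from the leaf down to the root, a non-output vertex of input multiplicity $\ell\ge 0$ must contribute the block $(0,1)^{\ell+1}(1,0)$ and an output must contribute a bare $(1,0)$. With that dictionary the height after each vertex equals its outgoing rate, returns to the diagonal occur exactly at rate-zero edges (hence at component boundaries), every non-output vertex produces a peak and every output a non-peak matched pair. Without the extra ``free'' peak attached to each non-output vertex, neither the length count nor the peak statement can come out right, so as written the construction fails before the two bulleted claims can be addressed.

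For comparison, the paper avoids the step-by-step dictionary entirely and defines $\rho$ by induction on $n$: a closed flow is the concatenation of its connected components, read from the root upward, and is sent to the concatenation of their images; a connected closed flow has an output at the root, which one peels off by deleting the root and subtracting $1$ from every edge rate, and if the peeled flow maps to $D$ the connected flow maps to $(0,1)D(1,0)$. Both bulleted properties are then immediate from the two clauses of the recursion. If you want to keep your direct approach, you should prove that the corrected block dictionary above is a bijection (a short but necessary verification on the height profile); otherwise fall back on the recursive decomposition, which is what your final sentence gestures at but does not carry out.
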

\begin{proof}
  The bijection is defined by induction on $n$. If $n=1$, there is
  only one closed flow on $\arb{0}$, which has no output, and only one
  Dyck path, which is $(0,1)(1,0)$.

  Assume now that $n$ is at least $2$, and the bijection $\rho$ is
  defined for smaller $n$.

  Any closed flow can be written as a list of connected components,
  starting from the component containing the root. Its image by $\rho$
  is defined as the concatenation of the images by $\rho$ of the
  connected components. 

  If there are at least $2$ connected components, this defines $\rho$
  by induction.

  If not, the closed flow is connected. Then the root is an
  output. One can remove $1$ to the rate of every edge and remove the
  root. This defines a closed flow on the linear tree with one vertex
  less. Its image by $\rho$ is taken to be $(0,1)D(1,0)$, where $D$ is
  the image by $\rho$ of the smaller flow, defined by induction.
 
  This decomposition is obviously mapped to the similar classical
  decomposition of Dyck paths, using sub-Dyck paths and down-moving of
  indecomposable paths. The inverse bijection is immediate.

  The statement on outputs follows easily by inspection of the
  bijection.
\end{proof}

The bijection is illustrated in figure \ref{fig:bijection}.

Let $\ca_{n,t}$ be the generating series $\xE_{\linear_n,t}$ and let
$\ca_{n}$ be the polynomial $\xE_{\linear_n}$.

The first few values of $\ca_{n,t}$ are
\begin{align*}
  \ca_{1,t}& =\frac{1}{1 - t},\quad
 \ca_{2,t}= \frac{1 + b - t b}{(1-t)^2}, \\
\ca_{3,t}&= \frac{1 + 3 b + b^{2}  - t (4 b + 2 b^{2})  + t^{2} (b  + b^{2})}{(1-t)^3}
\end{align*}

From the bijection above, it follows that $\ca_n$ counts Dyck paths
according to the number of peaks. These polynomials are classical in
combinatorics, and known as the Narayana polynomials, see for example
\cite{kostov_et_al}. We will call $\ca_{n,t}$ a $t$-Narayana fraction.

Let us introduce ordinary generating series
\begin{equation}
  E = \sum_{n \geq 1} \ca_n x^n \quad \text{and} \quad  E_t = \sum_{n \geq 1} \ca_{n,t} x^n,
\end{equation}
and let $E^c$ be the similar series for closed connected flows on linear
trees. 

The analogous series for small flows are just $x(1 + E)$ and $x(1
+ E_t)$, because a small flow on $\linear_{n+1}$ can be described by a
flow on $\linear_n$.

From the combinatorial decomposition used in the bijection with Dyck
paths, one deduces that
\begin{equation}
  \label{usual_eq}
  E=E^c/(1-E^c)\quad\text{and}\quad  E^c=x(1+b E).
\end{equation}

By decomposing a flow according to whether the root is an output or
not, one obtains the equation
\begin{equation*}
  E_t=x/(1-t)(1+E_t)+bx/t(E_t-E).
\end{equation*}
This is a special case of the global equation for flows
\eqref{master_eq_E}, that we will prove later.

It follows from all this that $E$ and $E^c$ are algebraic over
$\QQ(x)$ and that $E_t$ is algebraic over $\QQ(x,t)$.

\subsection{Conjectural formula for closed flows on forks}

% The \textbf{stem} of a tree $T$ is the chain starting at the
% root and stopping at the first vertex of valency not equal to $1$.

Recall from \S \ref{section_pl} that $\fork_{i,n-i}$ is the fork
with $n$ vertices, with stem of size $i$.

\begin{conjecture}
  The number of closed flows of size $k$ on the fork $\fork_{i,n-i}$ is given by
  \begin{equation}
    \# \fl(\fork_{i,n-i},k,0)=\binom{i}{k}\binom{n}{k}-\binom{i+1}{k+1}\binom{n-1}{k-1}.
  \end{equation}
\end{conjecture}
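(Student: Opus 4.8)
The plan is to reduce the statement to a weighted lattice-path count and then evaluate that count by a Vandermonde convolution. Since closed flows are exactly those with exit rate $0$, the number $\#\fl(\fork_{i,n-i},k,0)$ is the coefficient of $b^k t^0$ in $\xE_{\fork_{i,n-i},t}$, equivalently $[b^k]\xE_{\fork_{i,n-i}}$. Write the stem of the fork as $v_1$ (the root), $v_2,\dots,v_i$, the top stem vertex $v_i$ carrying the $\ell=n-i$ leaves. By the argument in the proof of Lemma \ref{lemma_maxi}, every output is a non-leaf vertex, so all outputs lie on the stem; moreover a closed flow is completely determined by its rate function, which is forced by conservation. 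Reading the rates $r_1,\dots,r_i$ along the stem from the root upwards (with $r_i$ the total rate entering $v_i$ from the leaves, and $r_0:=0$), the defining conditions of a closed flow become $0\le r_j\le r_{j-1}+1$ for all $j$, and the vertex $v_j$ is an output precisely when $r_j=r_{j-1}+1$. Thus a closed flow of size $k$ is the data of such a nonnegative path with exactly $k$ up-steps, together with a way of distributing the top rate $r_i=m$ over the $\ell$ leaves, of which there are $\binom{m+\ell-1}{\ell-1}$.

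First I would record the resulting identity
\begin{equation*}
  \#\fl(\fork_{i,n-i},k,0)=\sum_{m\ge 0} A(i,k,m)\,\binom{m+\ell-1}{\ell-1},\qquad \ell=n-i,
\end{equation*}
where $A(i,k,m)$ is the number of lattice paths $0=r_0,r_1,\dots,r_i=m$ with steps $r_j-r_{j-1}\le 1$, staying $\ge 0$, and having exactly $k$ up-steps. Removing the last step gives the one-step recursion $A(i,k,m)=A(i-1,k-1,m-1)+\sum_{m'\ge m}A(i-1,k,m')$.

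Next I would establish the closed form
\begin{equation*}
  A(i,k,m)=\binom{i}{k}\binom{i-m-1}{i-k-1}-\binom{i}{k+1}\binom{i-m-1}{i-k}.
\end{equation*}
These are ballot numbers: reversing time turns the paths into nonnegative {\L}ukasiewicz paths (steps $\ge -1$) from $m$ to $0$ with $k$ down-steps, the first term counting all such paths while ignoring the boundary and the second term counting, via a first-passage reflection below $0$, those that violate it. I would prove the formula either by the cycle lemma or, more safely, by induction on $i$ directly from the one-step recursion, the inductive step being a routine Pascal/hockey-stick manipulation. As a consistency check, at $m=0$ one recovers the Narayana numbers $\tfrac1i\binom{i}{k}\binom{i}{k+1}$, in agreement with the Dyck-path description of closed flows on linear trees in \S\ref{dyck}.

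Finally I would perform the summation. Applying the Vandermonde convolution $\sum_{j}\binom{j+r}{r}\binom{N-j+s}{s}=\binom{N+r+s+1}{r+s+1}$ to the two pieces (with $r=\ell-1$ and $s=i-k-1$, respectively $s=i-k$) collapses the sum to
\begin{equation*}
  \#\fl(\fork_{i,n-i},k,0)=\binom{i}{k}\binom{n-1}{k}-\binom{i}{k+1}\binom{n-1}{k-1},
\end{equation*}
and one application of Pascal's rule to $\binom{n}{k}$ and $\binom{i+1}{k+1}$ rewrites this as the claimed $\binom{i}{k}\binom{n}{k}-\binom{i+1}{k+1}\binom{n-1}{k-1}$. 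The main obstacle is the ballot formula for $A(i,k,m)$: the step set $\{1,0,-1,-2,\dots\}$ is not symmetric, so the naive reflection principle does not apply verbatim and the first-passage reflection (or cycle lemma) must be argued with care; once $A(i,k,m)$ is in hand, the remaining coefficient extraction and binomial bookkeeping are routine.
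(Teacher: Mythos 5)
First, note that the paper does not prove this statement: it is presented as a conjecture, verified there only in the degenerate cases $i\in\{1,\,n-1,\,n\}$ (corollas and linear trees). So there is nothing in the paper to compare your argument against; what you are proposing is an actual proof of the conjecture, and in substance it works.

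Your reduction is correct. On a fork every output is a stem vertex, the input/output data and the rate function determine each other, and the sequence $r_0=0,r_1,\dots,r_i$ of stem rates (subject to $0\le r_j\le r_{j-1}+1$, with $k$ equalities $r_j=r_{j-1}+1$) together with a weak composition of $r_i=m$ into $\ell=n-i$ parts is a bijective encoding of closed flows of size $k$. I checked the resulting identity and your closed form for $A(i,k,m)$ against the paper's displayed $\xE_{\fork_{2,2},t}$ (closed-flow counts $1,5,3$, matching $\binom{2}{k}\binom{4}{k}-\binom{3}{k+1}\binom{3}{k-1}$) and against $\fork_{3,1}$ and $\fork_{3,2}$; moreover the inductive step you defer to (Pascal plus hockey-stick applied to the one-step recursion $A(i,k,m)=A(i-1,k-1,m-1)+\sum_{m'\ge m}A(i-1,k,m')$) does go through, and the final Vandermonde and Pascal manipulations are correct.

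The one genuine defect is at the boundary. For $k=m=i$ (the all-up-steps path) your formula reads $\binom{i}{i}\binom{-1}{-1}-\binom{i}{i+1}\binom{-1}{0}$, which is $0$ under the usual convention $\binom{a}{b}=0$ for $b<0$, whereas $A(i,i,i)=1$; correspondingly the Vandermonde identity you invoke degenerates when $s=i-k-1=-1$. You also need to say which convention makes $\binom{i-m-1}{i-k}$ vanish for $m>k$, since the polynomial extension gives values like $\binom{-1}{1}=-1$ that would pollute the sum. Both issues disappear if you restrict the closed form and the convolution to $0\le m\le k\le i-1$ and treat $k=i$ separately, where the count is immediately $A(i,i,i)\binom{n-1}{\ell-1}=\binom{n-1}{i}=\binom{n}{i}-\binom{n-1}{i-1}$, in agreement with the right-hand side. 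With that bookkeeping repair, your argument settles the conjecture.
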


For $i=n-1$ or $i=n$, corresponding to linear trees, this formula
gives the Narayana numbers, which is the correct result for the linear
trees (see section \ref{dyck}). One can easily check that this also
gives the correct answer for $i=1$, namely for corollas.

\subsection{Zeroes of flow polynomials}

After inspection of some examples, one is tempted to ask the following
question.

\begin{question}
  Let $T$ be a rooted tree. Are the zeroes of $\xF_T$ real and
  negative ? Are the zeroes of $\xE_T$ real and negative ?
\end{question}

It is known, for the Narayana polynomials, that all roots are real,
simple and negative, see for example \cite{kostov_et_al}. Therefore
the question has a positive answer for linear trees. One can also
check easily that this is true for corollas.

\section{Series of flows}

\subsection{Global equations for flows}

Let us introduce now two series 
\begin{equation}
  \xE_t=\sum_T \xE_{T,t} \frac{T}{\aut(T)} \quad\text{and}\quad \xD_t=\sum_T \xD_{T,t} \frac{T}{\aut(T)},
\end{equation}
in the group $\gp_{\prelie}$ associated with the Pre-Lie operad.

Let $\xE$ (resp. $\xD$) be the value at $t=0$ of $\xE_t$
(resp. $\xD_t$). 

\begin{theorem}
  \label{th_d_corolle_e}
  The following identity holds:
  \begin{equation}
  \label{eq_d_corolle_e}
    \xD_t= \COR \diam (\arb{0},\xE_t).
  \end{equation}
\end{theorem}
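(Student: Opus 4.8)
The plan is to prove the identity by unwinding both sides into sums over rooted trees and matching coefficients, with the symmetry factors handled by the rooted-operad formalism of the appendix. The combinatorial heart is Lemma~\ref{lemme_d_exp_e}, which already expresses the small-flow series on a grafted tree $B_+(T_1,\dots,T_k)$ as the product $\prod_{i=1}^k \xE_{T_i,t}$; the diamond product on the right-hand side is precisely the device that assembles all such products into a single series.

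First I would recall the meaning of the two-argument diamond $\diam$ from the appendix: for a series supported on corollas, $X \diam (A,B)$ substitutes $A$ at the root vertex and $B$ at each of the remaining (leaf) vertices. Applying this to $\COR = \sum_{n\ge 0}\corol_n/n!$ with $A=\arb{0}$ and $B=\xE_t$, the root-substitution by $\arb{0}$ acts as the identity (it leaves a bare root), while each of the $n$ leaves of $\corol_n$ is replaced by a tree drawn from $\xE_t$. Expanding, the term coming from $\corol_n$ contributes
\[
\sum_{T_1,\dots,T_n} \Big(\prod_{i=1}^n \xE_{T_i,t}\Big)\, \frac{B_+(T_1,\dots,T_n)}{\aut(B_+(T_1,\dots,T_n))},
\]
so that $\COR\diam(\arb{0},\xE_t)$ is a sum over all trees $B_+(T_1,\dots,T_n)$ (equivalently, over all rooted trees with at least one vertex, indexed by the valency $n$ of the root) weighted by $\prod_i \xE_{T_i,t}$.

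Next I would identify this with $\xD_t$. By Lemma~\ref{lemme_d_exp_e}, the coefficient $\xD_{B_+(T_1,\dots,T_n),t}$ equals $\prod_{i=1}^n \xE_{T_i,t}$, which is exactly the weight produced above; and the degenerate case $n=0$ matches $\xD_{\arb{0},t}=1$. Thus the two series have, term by term, the same coefficients, provided the automorphism factors agree.

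The step I expect to be the main obstacle is precisely this bookkeeping of symmetry factors: reconciling the $1/n!$ in the definition of $\COR$, the $1/\aut(T_i)$ carried by each factor of $\xE_t$, and the single $1/\aut(B_+(T_1,\dots,T_n))$ demanded by $\xD_t$. This is not a naive product, since gluing the $T_i$ onto a common root can merge or break automorphisms. I would handle it exactly as in the proof of Lemma~\ref{inversion_somme_tous}, by invoking Proposition~\ref{circ_prop} in the rooted-operad $\prelie$: the substitution encoded by $\diam$ is a structure constant of the global composition map, and the required bijection (hypothesis $H_\sharp$) is supplied by Lemma~\ref{relation_produit_D_E}, which decomposes a small flow on $B_+(T_1,\dots,T_k)$ as a product of flows on the $T_i$ compatibly with size and exit rate. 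Once this bijection is fed into the appendix machinery, the symmetry factors come out correctly and the identity $\xD_t=\COR\diam(\arb{0},\xE_t)$ follows.
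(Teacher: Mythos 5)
Your proposal takes essentially the same route as the paper, which likewise reads the identity as a restatement of Lemma \ref{relation_produit_D_E} processed through the appendix machinery, with $A$ the species of corollas, $B$ the species reduced to the one-vertex tree, $C$ the species of flows and $D$ the species of small flows. One correction: the statement you need is Proposition \ref{diam_prop} (hypothesis $H_\natural(A,B,C,D)$, the rooted four-species version built for $\diam$), not Proposition \ref{circ_prop} (hypothesis $H_\sharp(A,B,C)$, the three-species version for $\circ$), since only the former allows the root part to carry a different series ($\arb{0}$) from the remaining parts ($\xE_t$) --- precisely the asymmetry your description of the diamond substitution relies on.
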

\begin{proof}
  This is essentially a restatement of Lemma
  \ref{relation_produit_D_E}, using the notation defined in
  \eqref{defi_crls} and the results of the appendix \ref{appA}.

  Namely, one applies Prop. \ref{diam_prop} of the appendix, with $A$
  the species of corollas, $B$ the species made only of the rooted
  tree on one vertex, $C$ the species of flows on rooted trees and $D$
  the species of small flows on rooted trees.
\end{proof}

\begin{theorem}
  \label{master_th_E}
  One has
  \begin{equation}
  \label{master_eq_E}
    \xE_t = \frac{1}{1-t}\xD_t + \frac{b}{t} \left( \xD_t - \xD \right).
  \end{equation}
\end{theorem}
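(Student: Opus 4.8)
The plan is to reduce \eqref{master_eq_E} to an identity holding coefficient by coefficient on each rooted tree, and then to prove that tree-wise identity by sorting flows according to the status of the root. Since $\tfrac{1}{1-t}$ and $\tfrac{b}{t}$ are scalars in the coefficient ring and the only operations in \eqref{master_eq_E} are scalar multiplication and addition, they act diagonally on the basis $\{T/\aut(T)\}$; no operadic composition is involved, in contrast with Theorem \ref{th_d_corolle_e}. Hence \eqref{master_eq_E} is equivalent to the family of scalar identities
\begin{equation*}
  \xE_{T,t} = \frac{1}{1-t}\,\xD_{T,t} + \frac{b}{t}\left( \xD_{T,t} - \xD_{T} \right),
\end{equation*}
one for each rooted tree $T$, where $\xD_T$ is the value of $\xD_{T,t}$ at $t=0$, that is, the generating function of small flows with exit rate $0$.

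To prove this, I would first record the key structural fact underlying the computation of rates: the outgoing rate of a vertex equals the total contribution (inputs counted with multiplicity, minus outputs) of the subtree it roots, so the rate on an edge depends only on the inputs and outputs lying strictly above it. Consequently the status of the root — input, output, or neither — never affects any edge rate and only shifts the exit rate. This lets me classify all flows $\psi\in\fl(T)$ by the root and match each class with small flows on $T$. If the root is neither an input nor an output, then $\psi$ is already a small flow, with the same size and exit rate. If the root is an output, deleting it from the set of outputs yields a small flow of size one less and exit rate one more; since the original exit rate was nonnegative, this small flow has exit rate at least $1$, and conversely every small flow of exit rate at least $1$ arises this way. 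Finally, if the root is an input of multiplicity $\ell\geq 1$, deleting it yields a small flow of the same size whose exit rate is smaller by $\ell$, and any small flow together with a choice of $\ell\geq 1$ recovers such a flow.

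Summing the weight $b^{\#\{\text{outputs}\}}t^{\text{exit}}$ over each class then produces three generating functions. The first class contributes $\xD_{T,t}$. The second, where an output adds a factor $b$ and lowers the exit rate by $1$, contributes $\tfrac{b}{t}$ times the generating function of small flows of exit rate at least $1$, namely $\tfrac{b}{t}(\xD_{T,t}-\xD_T)$. The third, summing the geometric series over $\ell\geq 1$, contributes $\bigl(\sum_{\ell\geq 1}t^\ell\bigr)\xD_{T,t}=\tfrac{t}{1-t}\,\xD_{T,t}$. Adding the first and third terms gives $\tfrac{1}{1-t}\xD_{T,t}$, which is the claimed scalar identity; summing over all $T$ with the weights $T/\aut(T)$ recovers \eqref{master_eq_E}.

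The routine part is the exact bookkeeping of the exponents of $b$ and $t$. The only genuine subtlety is the output case: the requirement that the exit rate of the resulting flow be nonnegative forces the matched small flows to have exit rate at least $1$, and this is exactly what produces both the correction $-\xD_T$ (the exit-rate-zero part of $\xD_{T,t}$) and the division by $t$. As a consistency check, specialising to linear trees, multiplying by $x^n$ and using $\xD_{\linear_{n+1},t}=\ca_{n,t}$ recovers the equation $E_t=\tfrac{x}{1-t}(1+E_t)+\tfrac{bx}{t}(E_t-E)$ stated earlier for the linear case.
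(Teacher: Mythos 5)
Your proof is correct and follows essentially the same route as the paper's: both classify flows according to the status of the root (input with some multiplicity $\ell\geq 0$, versus output) and match each class with small flows, the output case producing the factor $\tfrac{b}{t}(\xD_t-\xD)$ via the exit-rate-at-least-one condition. The only cosmetic difference is that you separate the ``neither input nor output'' case from the ``input with $\ell\geq 1$'' case before resumming the geometric series, whereas the paper folds them together as $\ell\geq 0$.
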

\begin{proof}
  Consider a rooted tree $T=B_+(T_1,\dots,T_k)$ endowed with a flow.
  
  Either the root is an input vertex with multiplicity $\ell$ for some
  $\ell \geq 0$. This can be described using a small flow and the
  integer $\ell$. One obtains
  \begin{equation*}
    \frac{1}{1-t} \xD_t.
  \end{equation*}

  The other possibility is that the root is an output vertex. Removing
  the output, one gets a small flow with the condition that the exit
  rate is not zero. This gives the term
  \begin{equation*}
    \frac{b}{t} \left(\xD_t -  \xD\right).
  \end{equation*}
\end{proof}

\subsection{Global equations for connected flows}

Let $\xE^c_t$ be the global series of connected flows:
\begin{equation}
  \xE^c_t=\sum_T \xE^c_{T,t} \frac{T}{\aut(T)},
\end{equation}
and let $\xE^c$ be its value at $t=0$.

\begin{theorem}
  The series $\xE^c_t$ satisfies the following equation
  \begin{equation}
    \label{global_eq_connected}
    \xE^{c}_t = \frac{1}{1-t} \COR \diam (\arb{0},\xE^c_t-\xE^c) + \frac{b}{t} \left( \COR \diam (\arb{0}, \xE_t^c-\xE^c) - \arb{0} \right).
  \end{equation}
\end{theorem}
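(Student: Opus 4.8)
The plan is to prove \eqref{global_eq_connected} by mirroring the two ingredients behind Theorems \ref{th_d_corolle_e} and \ref{master_th_E}, adapted to the connected setting. The guiding observation is that, by the very definition of connected components, a flow on $T$ is connected exactly when every edge of $T$ carries a strictly positive rate: two adjacent vertices lie in the same component precisely when the edge joining them is positive, so one component means all edges positive. This reformulation is what forces a positive-exit-rate condition to propagate down to subtrees, and it is the source of the $\xE^c_t - \xE^c$ appearing in place of the plain $\xE_t$ of Theorem \ref{th_d_corolle_e}.

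First I would establish the connected analogue of Theorem \ref{th_d_corolle_e}: the series of \emph{small} connected flows equals $Y := \COR \diam(\arb{0}, \xE^c_t - \xE^c)$. Writing $T = B_+(T_1,\dots,T_k)$ and invoking the restriction bijection of Lemma \ref{relation_produit_D_E}, a small flow on $T$ is a tuple of flows on the $T_i$, one per subtree, with the root empty. Under the edge-positivity reformulation, such a flow is connected if and only if each subtree flow is itself connected and the edge descending from the root of $T_i$ is positive, i.e. each subtree flow has strictly positive exit rate; conversely any such tuple glues back to a small connected flow. Since the series of connected flows on $T_i$ with positive exit rate is exactly $\xE^c_{T_i,t} - \xE^c_{T_i}$ (the subtraction killing the exit-rate-zero, closed, part), applying Prop.~\ref{diam_prop} of the appendix with $A$ the species of corollas, $B$ the single vertex, $C$ the species of connected flows of positive exit rate (global series $\xE^c_t - \xE^c$) and $D$ the species of small connected flows yields $Y$. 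I would record that $Y = \arb{0} + Y'$, where the isolated $\arb{0}$ is the empty flow (exit rate $0$) and $Y'$ collects the small connected flows of positive exit rate.

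Next I would run the root case-analysis of Theorem \ref{master_th_E} under the connectedness constraint. Given a connected flow on $T = B_+(T_1,\dots,T_k)$, each edge entering the root has positive rate $r_i \geq 1$, so every restriction to a $T_i$ is connected with positive exit rate, and the root is either an input of some multiplicity $\ell \geq 0$ or an output. If the root is an input of multiplicity $\ell$, erasing that multiplicity leaves an empty root and a small connected flow whose exit rate has dropped by $\ell$; summing $\sum_{\ell \geq 0} t^\ell = 1/(1-t)$ against $Y$ gives the term $\frac{1}{1-t}\,Y$, the empty-root case being $\ell = 0$. If the root is an output, its removal again empties the root, but the resulting small connected flow has exit rate $\sum_i r_i \geq 1$ and hence lies in $Y' = Y - \arb{0}$; the deleted output contributes a factor $b$ to the size and raises the exit rate by $1$, i.e. multiplies by $1/t$, giving $\frac{b}{t}\,(Y - \arb{0})$. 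These two cases are mutually exclusive and exhaustive, and adding them yields precisely \eqref{global_eq_connected}.

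The one genuinely delicate point, and the step I would verify most carefully, is the exit-rate bookkeeping that distinguishes this from Theorem \ref{master_th_E}: one must check both that connectedness really replaces the subtree series $\xE_t$ by $\xE^c_t - \xE^c$, and that deleting an \emph{output} (as opposed to an input) always lands in the positive-exit-rate part $Y'$, so that subtracting $\arb{0}$ in the second term is correct. A convenient consistency check is the base case $T = \arb{0}$, where the output case does not occur and the identity collapses to $\xE^c_{\pun,t} = 1/(1-t)$. As in the preceding theorems, converting these bijections into the stated equality of series in $\gp_{\prelie}$ is routine once the rooted-operad formalism of the appendix is in place.
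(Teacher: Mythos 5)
Your proposal is correct and follows essentially the same route as the paper: a case analysis on the status of the root (input of some multiplicity, versus output), combined with the observation that connectedness forces every subtree restriction to be a connected flow of positive exit rate, whence the substitution of $\xE^c_t-\xE^c$ into the corolla series. The paper's own proof is just a terser version of this; your explicit identification of $\COR \diam(\arb{0},\xE^c_t-\xE^c)$ with the series of small connected flows, and your check that the only small connected flow of exit rate zero is the empty flow on $\arb{0}$ (justifying the subtraction of $\arb{0}$ in the output term), are exactly the details the paper leaves implicit.
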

\begin{proof}
  This is similar to the proof of Theorems \ref{th_d_corolle_e}
  and \ref{master_th_E}. One has to distinguish according to the status
  of the root.

  If the root is an input (possibly empty), the restriction to every
  subtree is an arbitrary connected flow with non-zero outgoing rate
  at the root. We obtain the first term of the right-hand side.

  If the root is an output, there must be at least one subtree, and
  the restriction to every subtree is an arbitrary connected flow with
  non-zero outgoing rate at the root. This gives the second term of the
  right-hand-side.
\end{proof}

The series $\xE_t$ of flows can be recovered from the series $\xE^c_t$
of connected flows.

\begin{theorem}
  There holds
  \begin{equation}
    \label{rela_ect_e}
    \xE_t = \left( \sum_T \frac{T}{\aut(T)} \right) \diam( \xE^c_t, \xE^c).
  \end{equation}
\end{theorem}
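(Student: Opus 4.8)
The plan is to prove this the same way as Theorems \ref{th_d_corolle_e} and \ref{master_th_E}: exhibit an explicit combinatorial bijection and then promote it to an equality of series through the appendix machinery. The underlying bijection is the decomposition of an arbitrary flow into its connected components. Let $T$ be a rooted tree carrying a flow $\psi$. Collapsing each connected component of $\psi$ to a single point yields a rooted tree $\tau$, the skeleton, whose vertices are the connected components and whose edges are exactly the edges of $T$ joining two distinct components; by maximality of connected components these separating edges all have rate $0$. On each vertex of $\tau$ sits the connected flow obtained by restricting $\psi$ to the corresponding component.

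First I would pin down the constraint on each component. The root of $T$ lies in one distinguished component whose own root is the root of $T$, and the restriction of $\psi$ to it is a connected flow whose exit rate is exactly the exit rate of $\psi$; this is an arbitrary connected flow, counted by $\xE^c_t$. For any other component $C$, its root $r_C$ has its outgoing edge (towards the parent component) of rate $0$, since otherwise $r_C$ would be connected to a vertex below it and $C$ would fail to be maximal. Hence the restriction of $\psi$ to $C$ has exit rate $0$, i.e.\ is a closed connected flow, counted by $\xE^c$. Conversely, from the skeleton $\tau$, a connected flow placed at its root and closed connected flows placed at its remaining vertices, one reconstructs $T$ and $\psi$ by grafting the components back along $\tau$ and assigning rate $0$ to the skeleton edges; this is inverse to the decomposition.

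Next I would check that the statistics are compatible with the right-hand side. The size is additive, since every output belongs to exactly one component, so the variable $b$ behaves multiplicatively across components. The exit rate of $\psi$ is carried entirely by the distinguished root component, all other components being closed, so the variable $t$ enters only through the factor $\xE^c_t$ sitting at the root of $\tau$, while every non-root vertex of $\tau$ contributes the specialisation $\xE^c$ at $t=0$. This is precisely the pattern encoded by the diamond product $\left(\sum_T \frac{T}{\aut(T)}\right) \diam(\xE^c_t,\xE^c)$, whose skeleton shape ranges over all rooted trees.

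Finally, to turn the bijection into the stated identity I would invoke Proposition \ref{diam_prop} with $A$ the species of all rooted trees, $B$ the species of connected flows, $C$ the species of closed connected flows, and $D$ the species of all flows, exactly as in the proof of Theorem \ref{th_d_corolle_e}. The only genuine point to verify is the hypothesis of that proposition: that grafting the components back into $\tau$ is governed by the structure constants of the global composition map of the Pre-Lie operad. This is the main (though essentially routine) obstacle, and it is the same verification already used for the corolla decompositions earlier in the paper, so I expect it to go through without new difficulty.
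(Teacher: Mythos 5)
Your proof is correct and follows essentially the same route as the paper: the paper likewise applies Proposition \ref{diam_prop} with $A$ the species of rooted trees, $B$ the species of connected flows, $C$ the species of closed connected flows and $D$ the species of flows, with the hypothesis $H_\natural(A,B,C,D)$ verified by exactly the decomposition into connected components you describe (closed on the non-root components, arbitrary on the root component, the regluing data accounted for by a structure constant of the global composition of the Pre-Lie operad). Your write-up is just a more detailed version of the paper's argument.
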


\begin{proof}
  This follows from Prop. \ref{diam_prop} applied to the following four
  species: $A$ is the species of rooted trees, $B$ the species of
  connected flows, $C$ the species of closed connected flows and $D$
  the species of flows.

  The necessary bijection (\textbf{hypothesis} $H_\natural(A,B,C,D)$)
  is rather clear. Indeed, given any flow, one can define connected
  flows on its connected components, closed if not containing the
  root. One can also make a rooted tree $\tau$ with vertices the
  connected components. To be able to recover the flow, one has to
  know how to glue back components into the tree $\tau$. This is given
  by a constant of structure of the global composition of the Pre-Lie
  operad.
\end{proof}

In words, this theorem says that the series $\xE_t$ of flows is
obtained from the series of all trees, by insertion of $\xE^c_t$ in
the root and insertion of $\xE^c$ in all other vertices.

When $t=0$, this reduces to the factorisation of series
\begin{equation}
  \label{flow_is_tree_of_connected}
  \xE = \left( \sum_T \frac{T}{\aut(T)} \right) \circ \xE^c,
\end{equation}
in the group $\gp_{\prelie}$, which means that a closed flow is made
by gluing closed connected flows along a rooted tree.

\medskip

Because rooted trees that support closed connected flows have
root-valency at most $1$ by Lemma \ref{connexe_un}, one can write
\begin{equation}
  \label{from_EC_to_F}
  \xE^c= \arb{0}+ b\, \arb{0} \pl \xF,
\end{equation}
for some series $\xF$. We will use this series later on.

\subsection{Quotient series $\xE \circ \xD^{-1}$}

A \textbf{saturated flow} is a closed connected flow where every
non-leaf vertex is an output.

Let $\xE_T^s$ be the generating series for saturated flows on
$T$. Note that this is a monomial in the variable $b$, of degree the
number of non-leaf vertices of $T$.

\begin{lemma}
  \label{same_support}
  Let $T$ be a rooted tree that admits a closed connected flow. Then
  $T$ admits a saturated flow.
\end{lemma}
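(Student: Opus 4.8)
The plan is to deduce the lemma from a sharper, inductive statement that pins down exactly which exit rates are achievable by maximally saturated flows. For a rooted tree $S$ with root $\rho$ and an integer $e\ge 1$, say that $S$ is \emph{$e$-saturable} if $S$ carries a connected flow of exit rate $e$ in which every non-leaf vertex is an output. I will show, by induction on $\#S$, that $S$ is $e$-saturable if and only if $e\ge \valor(\rho)$. The lemma then follows by taking $e=1$.

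For the base case $S=\arb{0}$ the root is a leaf of valor $1$, there are no non-leaf vertices, and declaring the single vertex an input of multiplicity $e$ gives a connected flow (vacuously, as there are no edges) of exit rate $e$ for every $e\ge 1$. For the inductive step, write $S=B_+(S_1,\dots,S_m)$ with $m\ge 1$ and let $\rho_j$ be the root of $S_j$. Since $\rho$ is non-leaf it must be an output, so the output rule forces $e=\sum_j r_j-1$, where $r_j$ is the rate of the edge $\rho_j\to\rho$, equivalently the exit rate of the restriction of the flow to $S_j$. Now connectivity of the whole flow is equivalent to $r_j\ge 1$ together with connectivity of each restricted flow, while the non-leaf vertices of $S$ are exactly $\rho$ and the non-leaf vertices of the various $S_j$; so requiring all of them to be outputs amounts (the already-settled status of $\rho$ aside) to requiring each $S_j$ to be $r_j$-saturable. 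Hence $S$ is $e$-saturable precisely when one can choose integers $r_j$ with $r_j\ge \valor(\rho_j)$ (induction hypothesis) and $\sum_j r_j=e+1$. As each $r_j$ ranges over all integers $\ge \valor(\rho_j)$ and $m\ge 1$, the attainable sums $\sum_j r_j$ are exactly the integers $\ge \sum_j \valor(\rho_j)$; thus the $e$-saturable values are those with $e\ge 1$ and $e+1\ge \sum_j\valor(\rho_j)$, i.e.\ $e\ge \max(1,-1+\sum_j\valor(\rho_j))=\valor(\rho)$, which closes the induction.

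To conclude the lemma, observe first that the empty flow saturates $\arb{0}$ vacuously. If $T\neq\arb{0}$ admits a closed connected flow, then its root has valency at most $1$ by Lemma \ref{connexe_un}, and at least $1$ since $T\neq\arb{0}$, so $T=B_+(T_1)$; moreover the root of $T_1$ then has valor $1$ (this is exactly the earlier proposition characterising trees with a closed connected flow, equivalently the fact that the minimal non-zero exit rate of a connected flow on $T_1$ equals its valor, which must therefore be $1$). Applying the claim to $S=T_1$ with $e=1$ produces a connected flow on $T_1$, of exit rate $1$, in which every non-leaf vertex of $T_1$ is an output. Making the root of $T$ an output then gives the desired saturated flow: the unique edge into the root has rate $1$, so the exit rate is $0$; all edges keep positive rate, so the flow stays connected; and the non-leaf vertices of $T$ are the root together with the non-leaves of $T_1$, all outputs.

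The main obstacle is the inductive step, where one must recognise that the achievable exit rates of saturated flows form the up-set $\{e\ge\valor(\rho)\}$. The crucial points are that a Minkowski sum of such up-sets is again an up-set and that the shift by $-1$ coming from the forced output at the root reproduces exactly the defining recursion of the valor. Once this matching of recursions is established, the equivalence with admitting a closed connected flow, which is governed by the same valor condition, is immediate.
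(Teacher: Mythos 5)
Your proof is correct, but it takes a genuinely different route from the paper's. The paper's argument is a local surgery: it picks an arbitrary closed connected flow on $T$ and, by induction on the number of non-leaf vertices that are not yet outputs, repairs one such vertex $v$ at a time (first pushing an input at $v$ up to a leaf along a path, then declaring $v$ an output and compensating with one extra input at a leaf above $v$), preserving closedness and connectedness throughout. You instead prove a sharper global statement by induction on the tree: a tree is carried by a connected, everywhere-output-saturated flow of exit rate $e\ge 1$ exactly when $e\ge\valor(\rho)$, the key point being that the Minkowski sum of the up-sets $\{r_j\ge\valor(\rho_j)\}$ shifted by $-1$ reproduces the valor recursion; you then invoke Lemma \ref{connexe_un} and the valor characterisation of trees admitting closed connected flows to specialise to $e=1$. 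Your approach costs more machinery (it leans on the valor lemma and the surrounding proposition, which the paper's proof does not need) but buys more: it re-derives the valor characterisation for saturated flows and pins down all achievable exit rates, whereas the paper's surgery only converts one given flow into a saturated one. Both arguments are sound; one small point worth making explicit in yours is that leaves of a subtree $S_j$ coincide with the leaves of $S$ lying in $S_j$ (the edge $\rho_j\to\rho$ is outgoing and does not affect valency), which is what makes the phrase ``every non-leaf vertex is an output'' restrict correctly.
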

\begin{proof}
  Pick a closed connected flow on $T$. The proof is by induction on
  the number of non-leaf vertices which are not outputs. If the chosen
  flow is saturated, there is nothing to do. Otherwise, let $v$ be a
  non-leaf vertex which is not an output.

  If $v$ is not an input, one can put an output in $v$, choose a path
  from $v$ to some leaf $w$ of the subtree at $v$, and add $1$ to the
  rate on every edge of this path and $1$ input on $w$.

  If $v$ is an input, one can first move this input to a leaf, by
  choosing a path from $v$ to a leaf $w$ of the subtree at $v$, and
  adding $1$ to the rate on every edge of this path. Then one gets
  back to the previous case.
\end{proof}

Therefore rooted trees that admit closed connected flows are exactly
the same as rooted trees that admit saturated flows.

Let now $\xY$ be the quotient series $\xE \circ \xD^{-1}$ in the group
$\gp_{\prelie}$. One observes a surprising property.

\begin{conjecture}
  The coefficient $\xY_T$ of a rooted tree $T$ in $\xY$ is the monomial
  \begin{equation}
    (-1)^{L(T)-1} \xE_T^s,
  \end{equation}
  where $L(T)$ is the number of leaves of $T$.
\end{conjecture}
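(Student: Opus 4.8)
The plan is to prove the equivalent identity $\xY\circ\xD=\xE$ and to check it one tree at a time. Writing out the composition law of $\gp_{\prelie}$ recalled in the appendix, the coefficient of a rooted tree $T$ in $\xY\circ\xD$ is obtained by substituting a tree of $\xD$ at each vertex of a tree of $\xY$; concretely
\begin{equation*}
  (\xY\circ\xD)_T=\sum_{P}\xY_{\sigma(P)}\prod_{B\in P}\xD_{B},
\end{equation*}
where $P$ runs over the set partitions of the vertices of $T$ whose blocks $B$ are connected subtrees (each rooted at its lowest vertex), and $\sigma(P)$ is the rooted tree obtained by contracting every block to a point. The finest partition contributes $\xY_T$, since then $\sigma(P)=T$ and every factor $\xD_{B}=\xD_{\arb{0}}=1$; thus the relation determines $\xY_T$ by induction on $\#T$. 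Inserting the conjectured value $\xY_{S}=(-1)^{L(S)-1}\xE^s_{S}$ for the smaller contracted trees $S=\sigma(P)$, the whole statement becomes the combinatorial identity
\begin{equation*}
  \xE_T=\sum_{P}(-1)^{L(\sigma(P))-1}\,\xE^s_{\sigma(P)}\,\prod_{B\in P}\xD_{B}
\end{equation*}
for every rooted tree $T$.

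A natural first step is the case where the root of $T$ has valency at least two. By Lemma \ref{connexe_un} such a $T$ carries no closed connected flow, and then no saturated flow by Lemma \ref{same_support}, so $\xE^s_T=0$ and the conjecture predicts $\xY_T=0$. On the right-hand side the factor $\xE^s_{\sigma(P)}$ vanishes unless $\sigma(P)$ again has root-valency at most one, i.e.\ unless all but at most one of the branches hanging from the root of $T$ are entirely swallowed by the root block. I would check that, using Theorem \ref{th_d_corolle_e} and Lemma \ref{lemme_d_exp_e} to expand $\xD$, these surviving terms already sum to $\xE_T$, forcing $\xY_T=0$ and reducing the conjecture to the trees $\arb{0}$ and $B_+(T')$.

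For the remaining trees the core is a weight-preserving, sign-reversing involution proving the identity. A term on the right is a triple: a partition $P$, a saturated flow on $\sigma(P)$ (which forces every non-leaf block to be an output and every leaf block to be an input), and a small closed flow inside each block; its monomial records the total number of outputs. From such a triple one builds a genuine closed flow $\psi$ on $T$ of the same size, the inter-block saturated flow prescribing the rates across block boundaries and the small flows the rates inside the blocks. Grouping the terms by the resulting $\psi$, the identity reduces to showing that for each closed flow $\psi$ the signed count $\sum_{P}(-1)^{L(\sigma(P))-1}$, taken over the partitions compatible with $\psi$, equals $1$. The involution I would construct merges or splits blocks so as to change $L(\sigma(P))$ by exactly one while keeping $\psi$ and its size fixed, leaving a single distinguished compatible partition as the only fixed point.

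The construction of this involution is the main obstacle, and it is the reason the statement is only a conjecture. For linear trees there is nothing to prove: every $\sigma(P)$ is a path, so all signs are $+1$ and the terms already biject with closed flows; the first genuine cancellation occurs for $T=\fork_{2,2}=B_+(\corol_2)$, where the saturated flow on the contracted $\fork_{2,2}$ enters with sign $-1$ and must cancel one of two positive contributions of the same size. The difficulty is that the splitting of a closed flow $\psi$ into an inter-block saturated part and intra-block small parts is far from canonical at a branch vertex: the saturated constraint pins every non-leaf block to be an output, whereas $\psi$ distributes its outputs and the multiplicities of its inputs freely, so both the notion of a partition compatible with $\psi$ and the matching rule of the involution are delicate to pin down. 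An alternative that might sidestep the involution is to derive a closed functional equation for $\xY$ directly, combining $\xD=\COR\diam(\arb{0},\xE)$ from Theorem \ref{th_d_corolle_e} with \eqref{flow_is_tree_of_connected} and \eqref{from_EC_to_F}; this would require a usable formula for the inverse of the grafting-exponential operation $\COR\diam(\arb{0},-)$ in $\gp_{\prelie}$, for which the rooted-monoid formalism of the appendix seems the right tool.
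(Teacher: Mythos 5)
This statement is an open conjecture in the paper: no proof is given there, so there is nothing to compare your argument against, and your write-up does not close the gap either — you say so yourself. The setup is sound and consistent with the paper's framework: rewriting the claim as $\xY\circ\xD=\xE$, expanding the group law of $\gp_{\prelie}$ over vertex partitions into connected blocks, and using Lemmas \ref{connexe_un} and \ref{same_support} to predict the vanishing of $\xY_T$ when the root has valency at least $2$ are all reasonable first moves, and the reduction to the combinatorial identity
\begin{equation*}
  \xE_T=\sum_{P}(-1)^{L(\sigma(P))-1}\,\xE^s_{\sigma(P)}\,\prod_{B\in P}\xD_{B}
\end{equation*}
is a legitimate reformulation (modulo routine care with the automorphism normalisations, which your coefficient convention handles).

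The genuine gap is twofold. First, the map from a triple (partition, saturated flow on $\sigma(P)$, small closed flows on the blocks) to a closed flow $\psi$ on $T$ is not actually defined: a saturated flow forces the root of every non-leaf block to be an output, while the small flow inside that block forbids its root from being an input or output, so reconciling the two rate functions at block roots — and hence even saying which partitions are ``compatible with $\psi$'' — is unresolved. Second, and decisively, the sign-reversing involution that is supposed to show $\sum_P(-1)^{L(\sigma(P))-1}=1$ for each fixed $\psi$ is never constructed; you correctly identify this as the crux and note that already for $\fork_{2,2}$ a nontrivial cancellation must occur. Until both of these are supplied the argument establishes nothing beyond the (easy) linear-tree case, and even the root-valency-$\geq 2$ vanishing is only asserted (``I would check that\dots''), not proved. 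The alternative route you mention — inverting $\COR\diam(\arb{0},-)$ to get a closed functional equation for $\xY$ — is attractive and closer in spirit to how the paper manipulates its other series, but it too would require a new ingredient (an explicit inverse in the rooted-monoid) that neither you nor the paper provides.
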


If this is true, then by Lemma \ref{same_support}, the support of
$\xY$ is the same as the support of $\xE^c$, and one can write
\begin{equation}
  \label{Z_here}
  \xY= \arb{0}+ b\, \arb{0} \pl \xZ,
\end{equation}
for some series $\xZ$. We will consider this series again later.

% \subsection{Usual generating function}

% Let $E_t$ be the generating series
% \begin{equation}
%   E_t = \sum_n \sum_i \# \fl^{(i)}_n t^i x^n/n!,
% \end{equation}
% and $E$ be the value at $t=0$.

% It satisfies
% \begin{equation}
%   E_t = \frac{1}{1-t} X e^{E_t} +\frac{b}{t} X (e^{E_t}- e^{E}).
% \end{equation}

\section{Planar binary trees, dendriform operad and $\ncsf$}

\subsection{Notations for planar binary trees}

A \textbf{planar binary tree} on $n$ vertices is either the tree $1=|$ with no
inner vertex or a pair of two planar binary trees. Planar binary trees
will be drawn with their root at the bottom and leaves at the top,
aligned on a horizontal line. Examples are depicted in figure
\ref{fig:expl_bt}.

There is a natural involution on the set of planar binary trees, given
by left-right reversal, as shown in figure \ref{fig:expl_bt}.

The \textbf{canopy} of a planar binary tree is a sequence of letters
$\pp$ and $\mm$ of length $n-1$. There is a letter for each leaf but
the leftmost and rightmost one. The letter is $\mm$ is the leaf is the
left son of its parent vertex, and $\pp$ is the leaf is the right son
of its parent vertex.

For example, the canopy of the planar binary tree at the left of of figure
\ref{fig:expl_bt} is $\mm \pp \pp \mm \pp $.

We will also use the following variants: the full canopy is obtained
from the canopy by adding $\mm$ at the beginning and $\pp$ at the end,
the left-completed canopy by adding $\mm$ at the beginning, and the
right-completed canopy by adding $\pp$ at the end.

\begin{figure}
  \centering
  \includegraphics[height=2cm]{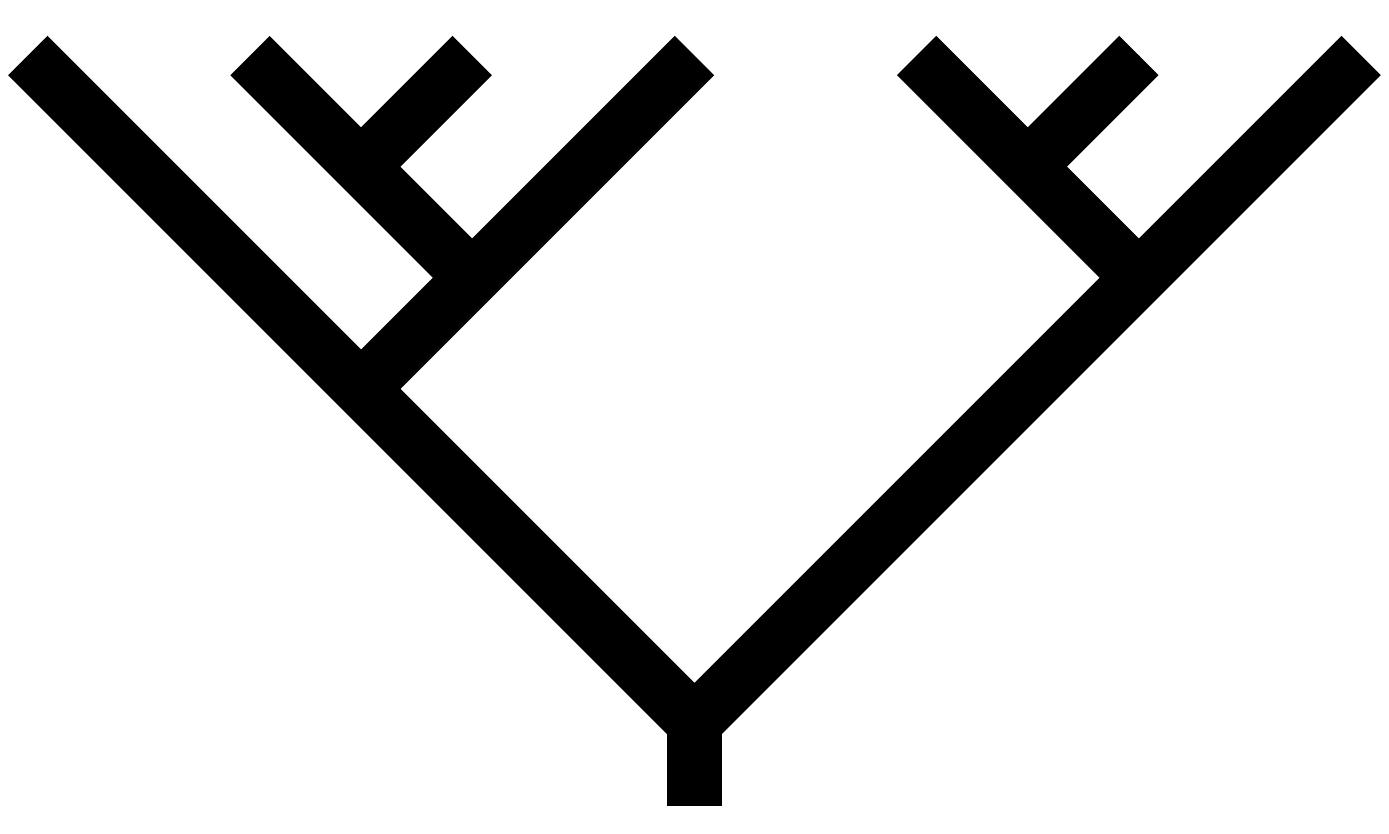}\hspace{1cm}\includegraphics[height=2cm]{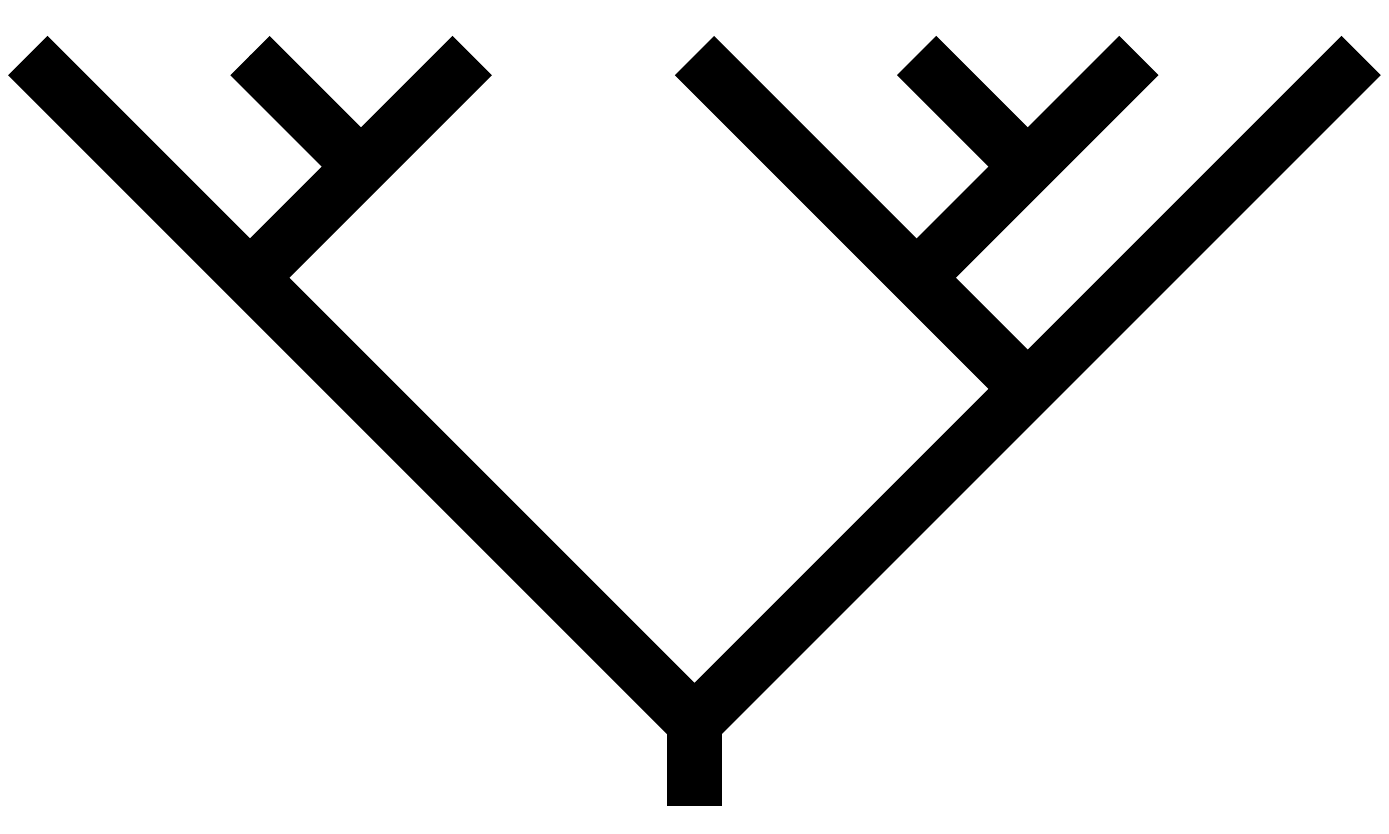}
  \caption{A planar binary tree with $6$ inner vertices, and its image by reversal.}
  \label{fig:expl_bt}
\end{figure}

\subsection{The Dendriform operad}

The Dendriform operad, introduced by Loday, is a non-symmetric operad
with a basis indexed by planar binary trees. The free dendriform
algebra is just the direct sum of all components of the Dendriform
operad. We refer the reader to \cite{loday_lnm} for more information
on the dendriform algebras.

On the free dendriform algebra $\dend$, there are two dendriform
products $\prec$ and $\succ$, that satisfy the $3$ dendriform
axioms. In particular, their sum defines an associative product
\begin{equation}
  x * y = x\succ y + x \prec y,
\end{equation}
which is the product used in the Hopf algebra structure of $\dend$.

We will use the following notation:
\begin{equation}
  \label{def_vee}
  x \vee_y z = x \succ y \prec z.
\end{equation}
By one of the dendriform axioms, no parentheses are needed in this
expression. When $y$ is the planar binary tree $\arb{1}$, the
operation $ x \vee z$ can be described as the gluing of $x$ and $z$ on
a common vertex.

From the dendriform axioms, one can deduce the following relations :
\begin{equation}
  \label{vee_star}
  (x \vee y) \prec z = x \vee (y*z) \quad \text{and} \quad  x \succ (y \vee z) = (x*y) \vee z.
\end{equation}

Oen can extend (in a unique way) the notation $x \vee_y z$ to the
cases where $x$ or $z$ are the unit tree $1$, with the same
properties.

Let $\phi$ be the operad morphism from the $\prelie$ operad to the
$\dend$ operad defined by its value on the labelled generator:
\begin{equation}
  \label{defi_phi}
  \phi(x \pl y) =y \succ x - x \prec y. 
\end{equation}
Therefore, the map $\phi$ sends $\arb{10}$ to $\arb{12}-\arb{21}$. One
can show that the morphism $\phi$ is injective by using that it
factorises through the Brace operad.

From now on, the expression ``dendriform image'' will mean the image
by $\phi$.

We will work in the group $\gp_{\dend}$ associated with the dendriform
operad. This is an open subset in the free dendriform algebra on one
generator $\dend$.

\begin{lemma}
  \label{racine_map}
  Let $T$ be a labeled rooted tree, with $i$ the label of the root.
  The dendriform image of $T$ is a linear combination of labelled planar binary
  trees whose root is labeled by $i$.
\end{lemma}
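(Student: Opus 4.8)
The plan is to prove the statement by induction on the structure of $T$, using the recursive formula for $\phi$ that follows from \eqref{defi_phi}. Since $\phi$ is an operad morphism, the induced map on free algebras is a morphism of pre-Lie algebras, where $\dend$ carries the pre-Lie product $u \pl v = v \succ u - u \prec v$; hence for all $u,v \in \prelie$ one has
\begin{equation*}
  \phi(u \pl v) = \phi(v) \succ \phi(u) - \phi(u) \prec \phi(v).
\end{equation*}
The starting observation is the root-label behaviour of the dendriform products on planar binary trees: in $p \prec q$ the root is inherited from $p$, and in $p \succ q$ the root is inherited from $q$. These rules follow from the relations \eqref{vee_star}, since every planar binary tree is of the form $x \vee_y z$ with root label that of $y$. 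Consequently, both terms of the displayed formula consist of trees rooted at the label of the root of $\phi(u)$.

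First I would treat the base and the root-valency-one cases. For $T=\arb{0}$ with label $i$ one has $\phi(\arb{0})=\arb{1}$, a single planar binary tree rooted at $i$. If $T=B_+(T_1)$ has root-valency one, I write $T=\arb{0}\pl T_1$, which is a single tree because $\arb{0}$ has only one vertex; the formula then expresses $\phi(T)$ via $\phi(T_1)\succ\phi(\arb{0})$ and $\phi(\arb{0})\prec\phi(T_1)$, both rooted at $i$. In general I would run a double induction, the outer one on $\#T$ and the inner one on the valency $k$ of the root of $T$.

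The main obstacle is the case $k\geq 2$, where one cannot present $T$ as a pre-Lie product without producing extra terms, since $u\pl v$ grafts $v$ at every vertex of $u$. The key point is to choose the decomposition $a=B_+(T_1,\dots,T_{k-1})$ and $b=T_k$: then in $a \pl b$ the grafting of $T_k$ at the root of $a$ produces exactly $T$, and this is the unique summand whose root still has valency $k$, every other summand having root-valency $k-1$. Thus
\begin{equation*}
  \phi(T) = \phi(a \pl b) - \sum_{T'} \phi(T'),
\end{equation*}
where each $T'$ has root labelled $i$, the same number of vertices as $T$, and root-valency $k-1$. By the inner induction each $\phi(T')$ is a combination of trees rooted at $i$, and by the outer induction $\phi(a)$ is such a combination (as $a$ has fewer vertices and root label $i$), so the recursive formula shows $\phi(a\pl b)$ is too; hence so is $\phi(T)$. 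The only facts needed outside this scheme are the root-inheritance rules for $\succ$ and $\prec$, which I expect to be routine from the defining recursions of the dendriform products on planar binary trees.
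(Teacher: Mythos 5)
Your argument is correct, but it follows a genuinely more hands-on route than the paper. The paper's own proof is two lines: it notes that the root condition holds on the generator by \eqref{defi_phi} and then invokes the general Lemma \ref{check_on_gen} of the appendix, which says that an operad morphism preserving roots on generators is automatically a morphism of rooted-operads; that lemma is proved by induction on arity, writing an arbitrary element as a linear combination of compositions $p_\alpha \circ_{i_\alpha} q_\alpha$ of strictly smaller-arity elements and discarding, by linearity, the terms whose root differs from that of the target. Your proof replaces this with an explicit double induction on $(\#T, \text{root valency})$ and the concrete decomposition $T = B_+(T_1,\dots,T_{k-1}) \pl T_k - \sum_{T'} T'$; the price of this explicit choice is that the correction terms $T'$ have the same arity as $T$, which is exactly what forces your inner induction on root valency, and the payoff is that you never need the rooted-operad formalism — only the root-inheritance rules for $\prec$ and $\succ$, which you correctly extract from \eqref{vee_star} via the presentation of a labelled planar binary tree as $x \vee_y z$, and which are precisely the statement that the Dendriform operad is a rooted-operad specialized to compositions with the generator. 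Both proofs thus rest on the same two ingredients (the generator condition and root-compatibility of dendriform composition), but yours is a self-contained unpacking where the paper appeals to general machinery; your well-founded lexicographic induction and the observation that grafting $T_k$ at a non-root vertex of $a$ preserves the root label and drops the root valency to $k-1$ are both sound, so there is no gap.
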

\begin{proof}
  One has to show that $\phi$ is a morphism of rooted-operads, in the
  language of the appendix \ref{appA}. This is clear on the generators
  by \eqref{defi_phi}, hence one can apply Lemma \ref{check_on_gen}.

  % This is true for the generators by \eqref{defi_phi}. Assume that
  % this is true for all labeled rooted trees with at most $n$ vertices,
  % for some $n\geq 3$. Let us consider the composition $S \circ_j T$,
  % where $S$ and $T$ are labeled rooted trees with at most $n$
  % vertices, and $j$ is a label of $S$. One can distinguish two cases.

  % If $j$ is not the root-label of $S$, any rooted tree in the
  % composition $S \circ_j T$ has the same root-label as $S$. By
  % induction hypothesis, any labeled planar binary tree in the image of
  % $S$ has the same root-label as $S$. It follows that any labeled
  % planar binary tree in the image of $S \circ_j T$ has the same
  % root-label as $S$.

  % If $j$ is the root-label of $S$, any rooted tree in the composition
  % $S \circ_j T$ has the same root-label as $T$. By induction
  % hypothesis, any labeled planar binary tree in the image of $T$ has
  % the same root-label as $T$, and the same is true for $S$. It follows
  % that any labeled planar binary tree in the image of $S \circ_j T$ has
  % the same root-label as $T$.

  % This is enough to prove the statement, as every labeled rooted tree
  % with $n$ vertices is a linear combination of compositions $S \circ_j
  % T$, for rooted trees $S$ and $T$ with strictly less vertices.
\end{proof}

\begin{lemma}
  \label{diamant_vee}
  Let $x,y,z,t$ in $\dend$. Then
  \begin{equation}
    (x \vee y) \diamond (z,t) = (x \circ t) \vee_z (y \circ t).
  \end{equation}
\end{lemma}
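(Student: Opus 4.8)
The plan is to unwind the definition of the rooted composition $\diamond$ supplied in the appendix \ref{appA} and reduce everything to a single grafting identity. By Prop.~\ref{diam_prop}, and as glossed in the text, $A \diamond (B,C)$ is the series obtained by substituting $B$ at the root vertex of each basis tree of $A$ and $C$ at all of its other vertices. Since both $\diamond$ and the group law $\circ$ are linear and continuous in each slot, I would first reduce to the case where $x,y,z,t$ are single planar binary trees and prove the identity there; the general statement then follows by multilinearity and continuity.

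Next I would analyse the shape of $x \vee y$. By \eqref{def_vee} one has $x \vee y = x \succ \arb{1} \prec y$, i.e.\ $x$ and $y$ glued on one common new vertex, the image of $\arb{1}$; this common vertex is the root of $x \vee y$ (as recorded by the description following \eqref{def_vee}, and consistent with the root-preservation of Lemma \ref{racine_map}), and the remaining vertices of $x \vee y$ are exactly the vertices of $x$ together with those of $y$. Applying $\diamond(z,t)$ therefore substitutes $z$ at this single root vertex and $t$ at every vertex of $x$ and of $y$. Substituting $t$ at every vertex of $x$ is, by the very definition of the group law $\circ$, nothing but $x \circ t$, and likewise for $y$; grafting these two processed subtrees on the left and the right of the inserted $z$ is, by \eqref{def_vee}, precisely $(x\circ t)\vee_z (y \circ t)$. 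This yields the claimed equality.

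An equivalent, more algebraic route — which I expect is the cleanest to write out in full — is to establish the two distribution rules $(a\succ b)\diamond(z,t) = (a\circ t)\succ\big(b\diamond(z,t)\big)$ and $(a\prec b)\diamond(z,t) = \big(a\diamond(z,t)\big)\prec(b\circ t)$, which simply record that the root of $a\succ b$ comes from $b$ and that of $a\prec b$ from $a$ (as visible from \eqref{vee_star}), so that the rooted slot stays on the side carrying the root while the other side receives the full substitution $\circ\,t$. Granting these and the trivial evaluation $\arb{1}\diamond(z,t)=z$, one computes $(x\vee y)\diamond(z,t) = \big((x\succ\arb{1})\prec y\big)\diamond(z,t) = \big((x\circ t)\succ z\big)\prec(y\circ t) = (x\circ t)\vee_z(y\circ t)$. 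The main obstacle in either approach is the same: to justify rigorously, from the rooted-operad formalism of the appendix, that the root of $x\vee y$ is the single $\vee$-vertex and that $\diamond$ distributes over $\succ$ and $\prec$ as above. Once this bookkeeping of the root is settled the identity is immediate; I would also check the degenerate cases $x=1$ or $y=1$, handled by the stated extension of $\vee_y$ to the unit tree.
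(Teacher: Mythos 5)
Your proposal is correct and follows the same route as the paper, whose proof is simply the one-line remark that the identity is an easy consequence of the definition \eqref{def_vee} of $\vee$ and of the definitions of $\diamond$ and $\circ$ in appendix \ref{appA}; your unwinding (the root of $x\vee y$ is the single $\vee$-vertex, $\diamond(z,t)$ inserts $z$ there and $t$ at all other vertices, which on the $x$- and $y$-parts is exactly $\circ\, t$) is precisely the bookkeeping the paper leaves implicit.
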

\begin{proof}
  This is an easy consequence of the definition \eqref{def_vee} of
  $\vee$ and of the general definition of the operations $\diamond$
  and $\circ$ in appendix \ref{appA}.
\end{proof}

\begin{lemma}
  \label{double_inversion}
  Let $x,y,z,t, u$ in $\dend$. Let $   v=(y \vee_z t).$  Then
  \begin{equation*}
    x \vee_{v} u = (x * y) \vee_z (t * u). 
  \end{equation*}
\end{lemma}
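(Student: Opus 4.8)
The plan is to reduce the whole identity to two one-sided absorption rules for the subscripted operation $\vee_z$, namely the evident generalisations of \eqref{vee_star} in which the middle argument is an arbitrary element $z$ rather than the unit tree:
\begin{equation*}
  (a \vee_z b)\prec c = a \vee_z (b * c) \quad\text{and}\quad a \succ (b \vee_z c) = (a*b)\vee_z c .
\end{equation*}
Granting these, the lemma follows by a two-step substitution. Starting from $x \vee_v u = x \succ v \prec u$ (the definition \eqref{def_vee}) and inserting $v = y\vee_z t$, I first move $x$ inside using the second rule, obtaining $x\succ (y\vee_z t) = (x*y)\vee_z t$; then I absorb $u$ on the right using the first rule, obtaining $\bigl((x*y)\vee_z t\bigr)\prec u = (x*y)\vee_z (t*u)$, which is exactly the right-hand side $(x * y) \vee_z (t * u)$.

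It remains to establish the two absorption rules, and this is where the only real care is needed, since one must track which of the three dendriform axioms is being used at each step. For the first rule I expand $(a \vee_z b)\prec c = (a \succ z \prec b)\prec c$ by \eqref{def_vee} and combine the two rightmost factors via the ``left'' axiom $(w \prec b)\prec c = w\prec (b*c)$ (taking $w = a\succ z$), which yields $a\succ z \prec (b*c) = a\vee_z (b*c)$. For the second rule I expand $a \succ (b\vee_z c) = a\succ (b\succ z\prec c)$, pull the trailing $\prec c$ outward with the ``middle'' axiom $w\succ(u\prec c) = (w\succ u)\prec c$, and then apply the ``right'' axiom $a\succ(b\succ z) = (a*b)\succ z$ to get $\bigl((a*b)\succ z\bigr)\prec c = (a*b)\vee_z c$; here one also uses that $a \succ z \prec b$ is unambiguous, which is itself the content of the middle axiom and already underlies \eqref{def_vee}.

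I expect no genuine obstacle: the statement is a purely formal consequence of the dendriform relations, so the work is entirely in the careful bookkeeping of the previous paragraph. The one point worth flagging is that the rules above are used with a general middle argument $z$, so I would either state them explicitly as the natural extension of \eqref{vee_star} or, to keep the proof self-contained, simply record that each follows from a single dendriform axiom exactly as in the unit case. With the generalised forms in hand, the computation $x \vee_v u = x\succ(y\vee_z t)\prec u = \bigl((x*y)\vee_z t\bigr)\prec u = (x*y)\vee_z(t*u)$ is immediate.
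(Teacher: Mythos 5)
Your proof is correct and is exactly the ``simple computation in the dendriform operad, starting from the definition of $\vee$'' that the paper invokes without spelling out: the two absorption rules you isolate each follow from a single dendriform axiom, and chaining them gives the identity. No difference in approach, only in the level of detail.
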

\begin{proof}
  This is a simple computation in the dendriform operad, starting from
  the definition \eqref{def_vee}.
\end{proof}

The suspension $\Sigma$ is defined by
\begin{equation}
  \Sigma \big{(} \sum_{n \geq 1}a_n \big{)} = \sum_{n \geq 1} (-1)^{n-1} a_n,
\end{equation}
where $a_n$ is homogeneous of degree $n$.

We will also use the \textbf{bar involution}, which is the composition of
suspension and reversal, that are two commuting involutions.

\subsection{The subalgebra $\ncsf$ of $\dend$}

Let $\ncsf$ be the algebra of non-commutative symmetric
functions. This is the free associative algebra generated by one
generator in every positive degree. We will use the basis of ribbon
Schur functions, indexed by compositions of $n$ in degree $n$. For
more information, the reader may consult
\cite{ncsf, thibon_lectures, ncsf6}.

Compositions of $n$ will be identified with strings of $n-1$ symbols
$\pp$ and $\mm$, by the convention that a $\mm$ symbol means ``cut
here'' and a $\pp$ symbols means ``do not cut here''. For example,
\begin{equation}
  1|4|1|2 \longleftrightarrow \mm \pp \pp \pp \mm \mm \pp.
\end{equation}

The product in the basis of ribbon Schur functions is given by the rule
\begin{equation}
  \epsilon * \delta = \epsilon\pp\delta + \epsilon\mm\delta.
\end{equation}

The inclusion from $\ncsf$ to $\dend$ is defined on the basis of Schur
function by sending a sequence of elements of $\{\pp,\mm\}$ to the sum
of all planar binary having this sequence as canopy. This is a
morphism of algebras.

One will need the following lemma.
\begin{lemma}
  \label{coeff_id}
  Let $\theta$ be a Lie idempotent in the descent algebra of $\sym_n$,
  seen as an element of $\ncsf$. Then the coefficient of the ribbon
  Schur function with index $\pp^{n-1}$ in $\theta$ is $1/n$.
\end{lemma}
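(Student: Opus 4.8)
The plan is to reduce the statement to a computation of the coefficient of the identity permutation in $\theta$, viewed in $\QQ[\sym_n]$, and then to evaluate that coefficient by a rank count in the regular representation.

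First I would unwind the identification between the degree-$n$ part of $\ncsf$ and the descent algebra inside $\QQ[\sym_n]$. Under the inclusion $\ncsf \hookrightarrow \dend \hookrightarrow \fqsym$ described above, the ribbon Schur function $R_{\pp^{n-1}}$ corresponds to the composition $(n)$, whose associated descent set is empty; hence it maps to the sum of all permutations with empty descent set, which is just the identity $\Id$. Consequently the coefficient of $R_{\pp^{n-1}}$ in $\theta$ equals the coefficient $c_{\Id}$ of the identity permutation when $\theta = \sum_{\sigma \in \sym_n} c_\sigma \sigma$ is written in the permutation basis. It therefore suffices to prove that $c_{\Id} = 1/n$ for every Lie idempotent $\theta$.

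Next I would extract $c_{\Id}$ as a trace. In the right regular representation $R$ of $\QQ[\sym_n]$ on itself, each $R(\sigma)$ is a permutation matrix with no fixed basis vector unless $\sigma = \Id$, so $\operatorname{tr} R(\sigma) = n!\,[\sigma = \Id]$ and therefore
\begin{equation*}
  c_{\Id} = \frac{1}{n!}\operatorname{tr} R(\theta).
\end{equation*}
Since $\theta$ is idempotent, $R(\theta)$ is a projection, and its trace equals its rank, namely the dimension of the left ideal $\QQ[\sym_n]\,\theta$.

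Finally I would identify this dimension. By definition of a Lie idempotent, multiplication by $\theta$ projects onto the space of Lie elements, so the left ideal $\QQ[\sym_n]\,\theta$ affords the Lie representation $\lie_n$, i.e.\ the multilinear component of the free Lie algebra, whose dimension is the classical value $(n-1)!$. Combining this with the display gives $c_{\Id} = (n-1)!/n! = 1/n$, as claimed. The one point that genuinely uses the hypothesis, and hence the main obstacle, is the identification of $\QQ[\sym_n]\,\theta$ with the Lie representation; everything else is bookkeeping about descent sets and permutation matrices. An alternative to this last step, avoiding the abstract dimension count, would be to observe that $c_{\Id}$ is an affine function on the affine space of Lie idempotents whose linear part vanishes, and then to evaluate it on the single explicit example $\tfrac1n \Psi_n$, the normalized noncommutative power sum, whose ribbon expansion has coefficient $1$ on $R_{\pp^{n-1}}$.
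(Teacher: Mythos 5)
Your proposal is correct, and its first half coincides exactly with the paper's argument: the ribbon Schur function indexed by $\pp^{n-1}$ corresponds to the composition $(n)$, hence to the empty descent set, hence to the single permutation $\Id$ under the inclusion of $\ncsf$ into $\fqsym$, so the problem reduces to showing that the coefficient of $\Id$ in any Lie idempotent is $1/n$. Where you diverge is in how that last fact is established. The paper simply cites it (Schocker, Prop.\ 2.4), whereas you prove it from scratch via the regular representation: $\operatorname{tr}R(\sigma)=n!\,[\sigma=\Id]$ gives $c_{\Id}=\tfrac{1}{n!}\operatorname{tr}R(\theta)$, idempotency turns the trace into a rank, and the defining property of a Lie idempotent identifies that rank with $\dim\lie_n=(n-1)!$. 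This is in fact the standard proof of the cited result, so your write-up is a self-contained version of the paper's proof rather than a genuinely different route; what it buys is independence from the reference, at the cost of invoking the classical dimension $(n-1)!$ of the multilinear Lie module. Two minor remarks: the trace/rank argument works equally well whether ``product by $\theta$'' means left or right multiplication, so the convention ambiguity is harmless; and your closing alternative (evaluating an affine function at $\tfrac1n\Psi_n$) is not fully justified as stated, since you would still need to show that the coefficient of $\Id$ vanishes on the direction space of the affine space of Lie idempotents --- but as it is offered only as an aside, this does not affect the validity of the main argument.
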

\begin{proof}
  By \cite[Prop. 2.4]{schocker}, the coefficient of $\Id$ in the
  expansion of any Lie idempotent in the usual basis of the symmetric
  group ring $\QQ[\sym_n]$ is $1/n$. By the inclusion of $\ncsf$ in
  $\fqsym$, a ribbon Schur function is mapped to the sum of all
  permutations with a fixed descent set, depending on its index. For
  the ribbon Schur function with index $\pp^{n-1}$, the image is just
  the permutation $\Id$. For a Lie idempotent in the descent algebra,
  the coefficient of the ribbon Schur function with index $\pp^{n-1}$
  is therefore $1/n$.
\end{proof}

\subsection{Known series in the dendriform group}

Let us recall some elements of $\gp_{\dend}$ and their properties.

Let $\sR$ be the positive sum of all right combs,
\begin{equation*}
  \sR = \dun + \arb{21}+ \arb{321} + \dots 
\end{equation*}
This is the unique solution of the equation
\begin{equation}
  \label{carac_R}
  \sR = \dun + \dun \prec \sR.
\end{equation}

Let $\sL$ be the alternative sum of all left combs,
\begin{equation*}
  \sL = - \dun +  \arb{12} - \arb{123} + \dots 
\end{equation*}
This is the unique solution of the equation
\begin{equation}
  \label{carac_L}
  \sL=- \dun - \sL \succ \dun.
\end{equation}

The bar involution maps $\sR$ to $-\sL$.

\begin{lemma}
  \label{L_inverse_R}
  The following inversion relation holds:
  \begin{equation}
    (1+\sL) * (1+ \sR)=1.
  \end{equation}
\end{lemma}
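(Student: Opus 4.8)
The plan is to reduce the claimed identity to a single cancellation, and then to extract that cancellation from the two characterising equations \eqref{carac_R} and \eqref{carac_L} together with the dendriform axioms, working in the unital extension of $\dend$ in which the unit $1$ satisfies $1\succ x=x$, $x\prec 1=x$, $1\prec x=0$, $x\succ 1=0$ and $1*x=x*1=x$. First I would expand
\[
  (1+\sL)*(1+\sR) = 1 + \sL + \sR + \sL * \sR,
\]
so that the lemma is equivalent to the vanishing $\sL + \sR + \sL*\sR = 0$. Splitting $\sL*\sR=\sL\succ\sR+\sL\prec\sR$ and absorbing the remaining linear terms through the unit rules $1\succ\sR=\sR$ and $\sL\prec 1=\sL$, I would rewrite the target as
\[
  \sL + \sR + \sL*\sR = (1+\sL)\succ\sR + \sL\prec(1+\sR),
\]
which isolates the two mixed half-products.

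Next I would recast the defining equations in unital form. Equation \eqref{carac_R} reads $\sR=\dun+\dun\prec\sR=\dun\prec(1+\sR)$, and equation \eqref{carac_L} reads $\sL=-\dun-\sL\succ\dun=-(1+\sL)\succ\dun$, the latter giving in particular $(1+\sL)\succ\dun=-\sL$. Substituting the first of these into the leading half-product and applying the dendriform axiom $(x\succ y)\prec z = x\succ(y\prec z)$ with $x=1+\sL$, $y=\dun$, $z=1+\sR$, I obtain
\[
  (1+\sL)\succ\sR = (1+\sL)\succ\bigl(\dun\prec(1+\sR)\bigr) = \bigl((1+\sL)\succ\dun\bigr)\prec(1+\sR) = -\sL\prec(1+\sR).
\]
Adding the second half-product $\sL\prec(1+\sR)$ then produces the exact cancellation $(1+\sL)\succ\sR+\sL\prec(1+\sR)=0$, whence $\sL+\sR+\sL*\sR=0$ and the lemma follows.

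The only delicate point, and the step I would verify most carefully, is the legitimacy of the manipulations involving the formal unit. One must confirm that the dendriform axiom $(x\succ y)\prec z=x\succ(y\prec z)$ persists when $x$ is replaced by the unital element $1+\sL$; by linearity this reduces to the $x=1$ instance $(1\succ y)\prec z=y\prec z=1\succ(y\prec z)$, which is consistent with the chosen conventions. Similarly one checks that the absorptions $1\succ\sR=\sR$, $\sL\prec 1=\sL$ and $(1+\sL)\succ\dun=\dun+\sL\succ\dun=-\sL$ are the correct unital readings of \eqref{carac_R} and \eqref{carac_L}. A quick low-degree sanity check confirms the conventions are the right ones: the degree-one part of the product is $\sL+\sR$ contributing $-\dun+\dun=0$, and in degree two the terms $\sL_2=\arb{12}$, $\sR_2=\arb{21}$ cancel against $(\sL*\sR)_2=-\dun*\dun=-(\arb{12}+\arb{21})$. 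Once the unit conventions are fixed, the argument is purely formal and no induction on the length of the combs is required.
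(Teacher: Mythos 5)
Your proof is correct, but it takes a genuinely different route from the one in the paper. The paper observes that $1+\sL$ and $1+\sR$ both lie in the subalgebra $\ncsf$, being the geometric series $\sum_{k\geq 0}(-1)^{k+1}\pp^k$ and $\sum_{k\geq 0}\mm^k$ in the ribbon basis, and then the identity telescopes immediately from the ribbon product rule $\epsilon*\delta=\epsilon\pp\delta+\epsilon\mm\delta$. You instead stay entirely inside the (unitally extended) dendriform algebra: you reduce the claim to $(1+\sL)\succ\sR+\sL\prec(1+\sR)=0$, substitute the unital forms $\sR=\dun\prec(1+\sR)$ and $(1+\sL)\succ\dun=-\sL$ of \eqref{carac_R} and \eqref{carac_L}, and apply the middle dendriform axiom $(x\succ y)\prec z=x\succ(y\prec z)$ once to produce the cancellation. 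Both arguments are sound; I checked that your use of the axiom never requires the undefined products $1\prec 1$ or $1\succ 1$, and that the unit conventions you adopt are the standard ones (consistent with the paper's extension of $x\vee_y z$ to the case where $x$ or $z$ is the unit tree). What your approach buys is independence from the embedding $\ncsf\hookrightarrow\dend$ and from the fact that this embedding is an algebra morphism, using only the characterising equations of the two comb series; what the paper's approach buys is a one-line verification once the $\ncsf$ picture is in place, and it is that picture which is reused repeatedly later in the article. Your argument is in fact close in spirit to the alternative proof the paper points to in the reference \cite{qidempotent}.
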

\begin{proof}
  Both $1+\sL$ and $1+\sR$ belong to the subalgebra $\ncsf$. Indeed
  $\sL$ and $\sR$ are the same as 
  \begin{equation*}
    \sum_{k \geq 0} (-1)^{k+1} \pp^k \quad \text{and}\quad \sum_{k \geq 0} \mm^k.
  \end{equation*}
  With the product rule $A * B = A \pp B+A \mm B$ of $\ncsf$, this
  identity is easily proved there. Another proof can be found in
  \cite[Prop. 5.1]{qidempotent}.
\end{proof}

\begin{proposition}
  \label{image_des_corolles}
  The dendriform image of 
  \begin{equation}
    \COR = \sum_{n\geq 0} \frac{\corol_n}{n!} \quad \text{is} \quad (1+\sR) \vee (1+\sL).
  \end{equation}
\end{proposition}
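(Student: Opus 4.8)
The plan is to reduce the statement to the computation of the dendriform image of a single corolla, and then to sum over $n$. Since $\phi$ is an operad morphism sending the generator $\arb{0}$ to $\dun$ and the pre-Lie product to the operation $u\pl v=v\succ u-u\prec v$ on $\dend$ recorded in \eqref{defi_phi}, and since $\corol_n=B_+(\arb{0},\dots,\arb{0})$ is obtained by grafting $n$ copies of $\arb{0}$ onto a one-vertex root, the image $\phi(\corol_n)$ is obtained from $\dun$ by the same grafting pattern, now using this induced pre-Lie product on $\dend$. By Lemma \ref{racine_map} applied to the root of $\corol_n$, every planar binary tree occurring in $\phi(\corol_n)$ has its root equal to the image of that vertex; concretely this forces $\phi(\COR)$ to be of the form $X\vee Y$ for two series $X,Y$ in $\gp_{\dend}$, and the whole problem becomes the identification $X=1+\sR$ and $Y=1+\sL$.

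To pin down $X$ and $Y$, I would set up a recursion on the number of leaves of the corolla. Grafting one further copy of $\arb{0}$ onto the root of $\corol_{n}$ produces $\corol_{n+1}$; under $\phi$ this single grafting, expanded through $u\pl v=v\succ u-u\prec v$, splits into a term that enlarges the left factor by a $\prec$-grafting of $\dun$ and a term that enlarges the right factor by a $\succ$-grafting of $\dun$. These two moves are exactly governed by the relations \eqref{vee_star} and by Lemma \ref{double_inversion}, which is what is needed to re-collect the resulting nested $\vee$ into a single one. Carrying this out, the left factor $X$ is seen to satisfy $X=1+\dun\prec X$ and the right factor $Y$ to satisfy $Y=1-Y\succ\dun$. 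These are precisely the equations characterising $1+\sR$ and $1+\sL$: indeed \eqref{carac_R} gives $\dun\prec(1+\sR)=\sR$, hence $1+\sR=1+\dun\prec(1+\sR)$, while \eqref{carac_L} gives $(1+\sL)\succ\dun=-\sL$, hence $1+\sL=1-(1+\sL)\succ\dun$. By uniqueness of the solutions of these fixed-point equations one gets $X=1+\sR$ and $Y=1+\sL$, which is the claim. As a consistency check, Lemma \ref{L_inverse_R} asserts $Y*X=1$, so the two factors are mutually inverse for $*$, as one expects of the two sides of a corolla.

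The main obstacle is the bookkeeping hidden in that recursion: the corolla $\corol_n$ is totally symmetric in its $n$ leaves, whence the factor $1/n!$ in $\COR$, whereas the combs $\sR$ and $\sL$ carry no factorials. Concretely, one must establish the closed form $\phi(\corol_n)=n!\sum_{p+q=n}(-1)^{q}\,R_p\vee L_q$, where $R_p$ (resp. $L_q$) denotes the right comb (resp. left comb) with $p$ (resp. $q$) inner vertices, with $R_0=L_0=1$ and the $\vee$ extended to unit arguments as allowed after \eqref{vee_star}. One checks the base case $n=1$ at once, since $R_1\vee L_0-R_0\vee L_1=\dun\succ\dun-\dun\prec\dun=\phi(\corol_1)$. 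Summing the closed form over $n$ with weight $1/n!$ collapses the factorials and yields $\big(\sum_p R_p\big)\vee\big(\sum_q(-1)^q L_q\big)=(1+\sR)\vee(1+\sL)$, which is the proposition. Proving the closed form itself is where the real work lies: I would argue by induction on $n$, feeding the one-leaf grafting step through the dendriform axioms in the form \eqref{vee_star} and using Lemma \ref{double_inversion} to merge the two nested $\vee$'s created at each step, the factor $n!$ arising because each of the $n!$ orders in which the leaves may be attached contributes the same pair of combs. A cleaner alternative, closer to the rooted-operad language of the appendix, is to transport a single $\diamond$-recursion for $\COR$ through $\phi$ and simplify the right-hand side with Lemma \ref{diamant_vee}; either way, the crux is the same symmetrisation identity.
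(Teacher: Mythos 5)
The paper does not prove this proposition at all: it simply cites \cite{ronco}, so you are in effect attempting to reprove Ronco's theorem from scratch. Your target identity $\phi(\corol_n)=n!\sum_{p+q=n}(-1)^{q}R_p\vee L_q$ is indeed correct and, summed against $1/n!$, is exactly equivalent to the proposition (I checked it reproduces $\phi(\corol_2)=2\bigl[(\dun\prec\dun)\succ\dun-\dun\succ\dun\prec\dun+\dun\prec(\dun\succ\dun)\bigr]$ using the dendriform axioms). But the argument you sketch for it has two genuine gaps. First, Lemma \ref{racine_map} does not ``force'' $\phi(\COR)$ to be of the form $X\vee Y$: every planar binary tree decomposes uniquely at its root, so any series is a sum of terms $x\vee_{\dun}z$, but the assertion that the coefficients factor as a product of a left series and a right series is a strong multiplicative statement, and it is precisely what has to be proved, not a consequence of knowing where the root sits.

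Second, and more seriously, the inductive step rests on a conflation of ``grafting a new leaf onto the root'' with the pre-Lie product. In the free pre-Lie algebra, $\corol_n\pl\arb{0}=\corol_{n+1}+n\,B_+(\linear_2,\arb{0},\dots,\arb{0})$, since the pre-Lie product grafts at \emph{every} vertex; so $\corol_{n+1}$ is not obtained from $\corol_n$ by a single application of $\phi(x\pl y)=y\succ x-x\prec y$, and the correction term involves a non-corolla tree, so your recursion does not close on corollas. Even for the ``good'' part, $\dun\succ(R_p\vee L_q)-(R_p\vee L_q)\prec\dun=(\dun* R_p)\vee L_q-R_p\vee(L_q*\dun)$ by \eqref{vee_star}, and since $\dun* R_p=R_{p+1}+\dun\succ R_p$ (and similarly on the right), you generate parasitic terms $(\dun\succ R_p)\vee L_q$ and $R_p\vee(L_q\prec\dun)$ that are not of comb-$\vee$-comb shape; the whole content of the proof is that these cancel against the image of the correction trees, and your sketch never confronts this. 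Relatedly, the factor $n!$ is asserted (``each of the $n!$ orders contributes the same pair of combs'') rather than derived; in any honest induction it has to emerge from the telescoping of the sum over $p+q=n$, and your bookkeeping does not produce the jump from $n!$ to $(n+1)!$. The cleanest repair is the route you mention only in passing: work with the symmetric brace (or the $\diamond$ operation of appendix \ref{appA}) to isolate root-grafting correctly, or simply invoke \cite{ronco} as the paper does.
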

\begin{proof}
  This was proved in \cite{ronco}.
\end{proof}

We will now recall and extend some results of \cite{qidempotent}. 
Beware that this article uses slightly different notations.

Recall from section \ref{section_pl} that $\linear_\ell$ is the linear
rooted tree with $\ell$ vertices.

\begin{lemma}
  \label{lemme_auxi_1}
  The dendriform image of $\sum_T \frac{T}{\aut(T)}$ is given by
  \begin{equation}
    (1-\Sigma \sL) * \phi\left(\sum_{\ell \geq 1} \linear_\ell \right) * (1-\Sigma \sR).
  \end{equation}
\end{lemma}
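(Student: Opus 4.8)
The plan is to turn the statement into a fixed-point equation for $\sum_T \frac{T}{\aut(T)}$ and then verify that the right-hand side solves the same equation. Write $\Omega=\sum_T \frac{T}{\aut(T)}$ and $M=\phi\!\left(\sum_{\ell\geq 1}\linear_\ell\right)$. The starting observation is that a rooted tree is a root carrying a set of rooted trees, which gives the self-referential identity
\begin{equation}
  \Omega = \COR \diam (\arb{0},\Omega),
\end{equation}
exactly parallel to Theorem \ref{th_d_corolle_e} (the $n=0$ corolla contributes the fixed term $\arb{0}$, and the higher corollas insert copies of $\Omega$ in the upper slots, raising the number of vertices). Since $\phi$ is a morphism of rooted-operads (Lemma \ref{racine_map}) it commutes with $\diam$, so applying $\phi$, using Proposition \ref{image_des_corolles} to replace $\phi(\COR)$ by $(1+\sR)\vee(1+\sL)$, and then Lemma \ref{diamant_vee} with $z=\phi(\arb{0})=\dun$, one obtains
\begin{equation}
  \phi(\Omega) = \bigl((1+\sR)\circ \phi(\Omega)\bigr) \vee \bigl((1+\sL)\circ \phi(\Omega)\bigr),
\end{equation}
where the unsubscripted $\vee$ is $\vee_{\dun}$, gluing along a common vertex.

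Next I would show that the candidate $P=(1-\Sigma\sL) * M * (1-\Sigma\sR)$ satisfies the very same equation, together with the correct lowest-degree term $\dun$ (which is immediate, since $1-\Sigma\sL$ and $1-\Sigma\sR$ begin with $1$ and $M$ begins with $\dun$). The middle factor $M$ is the image of the linear trees, which from $\sum_\ell\linear_\ell=\arb{0}+\bigl(\sum_\ell\linear_\ell\bigr)\pl\arb{0}$ and \eqref{defi_phi} satisfies the linear equation $M=\dun+\dun\succ M-M\prec\dun$; this identifies $M$ as the backbone, while the two comb factors are responsible for the branching. The substitutions $(1+\sR)\circ P$ and $(1+\sL)\circ P$ are then simplified using the defining equations \eqref{carac_R} and \eqref{carac_L} of the combs (so that substitution into a comb is governed by a single $\prec$- or $\succ$-recursion), the inversion relation of Lemma \ref{L_inverse_R}, and the dendriform relations \eqref{vee_star} together with Lemma \ref{double_inversion}, which allow the nested $\vee$-products produced on the right to be refolded into associative products and matched against the triple product $P$.

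Finally, I would conclude by uniqueness: the operator $X\mapsto \phi(\COR)\diam(\dun,X)$ determines the degree-$n$ component of a solution from strictly lower-degree components (apart from the fixed term $\dun$ in the lowest degree), so a solution with prescribed lowest term is unique, and since both $\phi(\Omega)$ and $P$ are such solutions they coincide. The main obstacle is the middle step: correctly evaluating the comb substitutions $(1+\sR)\circ P$ and $(1+\sL)\circ P$ and re-expressing the resulting nested $\vee$-expression as an associative product, i.e.\ making the left/right asymmetry of $\succ$ and $\prec$ — and the accompanying suspension signs hidden in $\Sigma\sL$ and $\Sigma\sR$ — bookkeep exactly into the factors $1-\Sigma\sL$ and $1-\Sigma\sR$; the relations \eqref{vee_star} and Lemma \ref{double_inversion} are the precise tools designed to carry this out.
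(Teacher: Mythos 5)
Your strategy is genuinely different from the paper's: the paper disposes of this lemma in two lines by quoting Propositions 5.6 and 6.4 of \cite{qidempotent} together with Lemma \ref{L_inverse_R}, whereas you propose a self-contained fixed-point argument. The frame of that argument is sound: the identity $\Omega=\COR\diam(\arb{0},\Omega)$ does hold (it is the tree analogue of Theorem \ref{th_d_corolle_e} and follows from Proposition \ref{diam_prop}), $\phi$ does commute with $\diam$ by functoriality of the rooted-monoid construction, and the resulting equation $X=((1+\sR)\circ X)\vee((1+\sL)\circ X)$ does determine $X$ degree by degree from its lowest term. But the proof is not complete, for two reasons. First, the decisive step --- verifying that $(1-\Sigma\sL)*\phi(\sum_\ell\linear_\ell)*(1-\Sigma\sR)$ satisfies this equation --- is only announced, with a list of tools; you yourself call it ``the main obstacle''. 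Unwinding it, the verification amounts to proving $\sR\circ P=-\Sigma\sL$ and $\sL\circ P=-\Sigma\sR$ for the candidate $P$, which is essentially Lemma \ref{nice_compo} and its mirror image. Here lies a circularity hazard you do not address: in the paper, Lemma \ref{nice_compo} rests on Lemma \ref{inverse_cool}, which rests on Proposition \ref{image_de_tout}, which rests on the very lemma you are proving. You would have to re-derive those composition identities independently (possible via Lemmas \ref{L_inverse_R}, \ref{diamant_vee}, \ref{double_inversion} and the comb equations, but that is real work, not bookkeeping).

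Second, a concrete error: your recursion for $M=\phi\left(\sum_\ell\linear_\ell\right)$ starts from the decomposition $\sum_\ell\linear_\ell=\arb{0}+\left(\sum_\ell\linear_\ell\right)\pl\arb{0}$, but grafting $\arb{0}$ onto a linear tree $L$ via $L\pl\arb{0}$ adds a new leaf at \emph{every} vertex of $L$ and so produces non-linear trees as well. The correct decomposition is $\linear_{\ell+1}=\arb{0}\pl\linear_\ell$, which by \eqref{defi_phi} gives $M=\dun+M\succ\dun-\dun\prec M$ rather than your $M=\dun+\dun\succ M-M\prec\dun$; the two agree only up to degree $2$. Since the whole difficulty of this lemma is precisely the left/right bookkeeping of $\succ$ and $\prec$ against the factors $1-\Sigma\sL$ and $1-\Sigma\sR$, this swap is not harmless and would derail the middle step you intend to carry out.
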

\begin{proof}
  This follows from \cite[Prop. 5.6]{qidempotent} (at $q=\infty$) and \cite[Prop. 6.4]{qidempotent}. One also uses Lemma \ref{L_inverse_R}.
\end{proof}

\begin{lemma}
  \label{lemme_auxi_2}
  One has
  \begin{equation}
    (1-\Sigma \sL) * \phi\left(\sum_{\ell \geq 1} \linear_\ell \right)=
    \sum_{n \geq 1} n L_n,
  \end{equation}
  where $L_n$ is the left comb with $n$ vertices.
\end{lemma}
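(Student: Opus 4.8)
The plan is to turn the statement into a comparison of two fixed-point equations in the (completed) dendriform algebra. First I would rewrite the left factor. Since $\sL=\sum_{n\geq1}(-1)^nL_n$ with $L_n$ homogeneous of degree $n$, the suspension gives $\Sigma\sL=-\sum_{n\geq1}L_n$, so that $1-\Sigma\sL=1+\Lambda$, where I set $\Lambda:=\sum_{n\geq1}L_n$. From $L_1=\dun$ and $L_n=L_{n-1}\succ\dun$ one gets that $\Lambda$ is characterised by $\Lambda=\dun+\Lambda\succ\dun$, hence $\Lambda\succ\dun=\Lambda-\dun$. With the standard unital conventions $1\succ\dun=\dun$ and $1\prec x=0$, this yields the identity I will lean on,
\[
(1+\Lambda)\succ\dun=\Lambda,\qquad\text{equivalently}\qquad (1+\Lambda)-(1+\Lambda)\succ\dun=1 .
\]

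Next I would produce recursions for the two sides. Writing $P:=\phi\!\left(\sum_{\ell\geq1}\linear_\ell\right)$, I use that $\linear_\ell=\linear_1\pl\linear_{\ell-1}$ in the free pre-Lie algebra (grafting the chain $\linear_{\ell-1}$ onto the one new vertex $\linear_1$) together with the defining relation $\phi(x\pl y)=\phi(y)\succ\phi(x)-\phi(x)\prec\phi(y)$ and $\phi(\linear_1)=\dun$; summing over $\ell\geq1$ gives
\[
P=\dun+P\succ\dun-\dun\prec P .
\]
On the other hand, $L_n=L_{n-1}\succ\dun$ shows immediately that $N:=\sum_{n\geq1}nL_n$ satisfies $N=\Lambda+N\succ\dun$.

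The heart of the argument is to prove that $U:=(1+\Lambda)*P$ satisfies the same equation $U=\Lambda+U\succ\dun$. Setting $A:=1+\Lambda$ and substituting the recursion for $P$ into $U=A*P$, I would expand every product via $*=\succ+\prec$ and simplify the $\succ$-nested terms using two of the three dendriform axioms, in the forms $x\succ(y\succ z)=(x*y)\succ z$ and $x\succ(y\prec z)=(x\succ y)\prec z$; this turns $A\succ(P\succ\dun)$ into $U\succ\dun$ and $A\succ(\dun\prec P)$ into $(A\succ\dun)\prec P$. Collecting the surviving $\prec$-terms and using $P\succ\dun-\dun\prec P=P-\dun$ from the recursion, the leftover assembles into $(A-A\succ\dun)\prec P=1\prec P=0$, precisely because $A\succ\dun=\Lambda$; what remains is $A\succ\dun+U\succ\dun=\Lambda+U\succ\dun$. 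This cancellation, which depends on the special shape $A-A\succ\dun=1$ coming from the exact factor $1-\Sigma\sL$, is the one genuinely delicate point and the place where an arbitrary series would fail.

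Finally, since the operator $X\mapsto\Lambda+X\succ\dun$ strictly raises degree in its $X$-dependent part, the equation $X=\Lambda+X\succ\dun$ has a unique solution in the completed dendriform algebra (the degree-$n$ component of $X$ is determined by $\Lambda$ and the degree-$(n-1)$ component of $X$). As both $U$ and $N$ solve it, they coincide, giving $(1-\Sigma\sL)*\phi\!\left(\sum_{\ell\geq1}\linear_\ell\right)=\sum_{n\geq1}nL_n$.
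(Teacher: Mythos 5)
Your argument is correct, and it is genuinely different from what the paper does: the paper disposes of this lemma by pointing to \cite[Prop.~5.3]{qidempotent}, whereas you give a self-contained proof inside the (completed, unital) dendriform algebra. Your three ingredients all check out. First, $\Sigma\sL=-\sum_n L_n$, so $1-\Sigma\sL=1+\Lambda$ with $\Lambda=\dun+\Lambda\succ\dun$ forced by \eqref{carac_L}, whence $(1+\Lambda)\succ\dun=\Lambda$. Second, $\linear_{\ell}=\linear_1\pl\linear_{\ell-1}$ (consistent with the paper's convention that $\arb{0}\pl T=B_+(T)$, cf.\ \eqref{from_EC_to_F}) together with \eqref{defi_phi} gives $P=\dun+P\succ\dun-\dun\prec P$, and $L_{n+1}=L_n\succ\dun$ gives $N=\Lambda+N\succ\dun$. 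Third, the fixed-point computation goes through: writing $A=1+\Lambda$ and expanding $A*(\dun+P\succ\dun-\dun\prec P)$, the axioms $x\succ(y\succ z)=(x*y)\succ z$ and $x\succ(y\prec z)=(x\succ y)\prec z$ produce $U\succ\dun$ and $(A\succ\dun)\prec P$, the remaining $\prec$-terms combine via $P\succ\dun-\dun\prec P=P-\dun$ into $(A-A\succ\dun)\prec P=1\prec P=0$, and $A*\dun-A\prec\dun=A\succ\dun=\Lambda$ supplies the constant term; uniqueness of the solution of $X=\Lambda+X\succ\dun$ (degree by degree) then forces $U=N$. What your route buys is independence from the external reference and an argument entirely parallel in spirit to the paper's own proofs of \eqref{carac_R}, \eqref{carac_L} and Lemma \ref{nice_compo}, namely characterising a series by a fixed-point equation that raises degree; the cost is that one must be a little careful with the unital conventions ($1\prec x=0$, $1\succ x=x$), which you use correctly and which are exactly where the cancellation $(A-A\succ\dun)\prec P=0$ comes from.
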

\begin{proof}
  This is essentially \cite[Prop. 5.3]{qidempotent}.
\end{proof}

\begin{lemma}
  \label{lemme_auxi_3}
  There holds
  \begin{equation}
    \left(\sum_{n \geq 1} n L_n \right)* (1-\Sigma \sR) = (1-\Sigma\sL)\vee (1-\Sigma\sR).
  \end{equation}
\end{lemma}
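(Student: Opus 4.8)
The plan is to rewrite both sides in the comb bases and reduce everything to a single application of a dendriform axiom. Writing $L_n$ (resp. $R_n$) for the left (resp. right) comb with $n$ vertices and setting $L_0=R_0=1$, the first step is to unwind the suspensions: since the degree-$n$ part of $\sL$ is $(-1)^nL_n$ and that of $\sR$ is $R_n$, one gets
\[
1-\Sigma\sL=\sum_{n\geq 0}L_n=:P,\qquad 1-\Sigma\sR=\sum_{m\geq 0}(-1)^mR_m=:Q .
\]
From the characterisations \eqref{carac_L} and \eqref{carac_R} one reads off the comb recursions $L_{n+1}=L_n\succ\dun$ and $R_{m+1}=\dun\prec R_m$ (with $L_1=1\succ\dun=\dun$, $R_1=\dun\prec 1=\dun$), and these give the two reduction formulas
\[
P\succ\dun=\sum_{n\geq 1}L_n=P-1,\qquad \dun\prec Q=1-Q .
\]

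Next I would simplify the right-hand side. By the definition \eqref{def_vee} of $\vee$ (with $\arb{1}=\dun$) and the dendriform axiom that renders $x\succ\dun\prec z$ unambiguous, one has $P\vee Q=(P\succ\dun)\prec Q=(P-1)\prec Q$, and $P-1$ lies in the augmentation ideal so this product is well defined.

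The bridge between the two sides is the term-by-term identity $A\succ\dun=A-(P-1)$ for $A=\sum_{n\geq 1}nL_n$, which follows from $L_n\succ\dun=L_{n+1}$ by reindexing. Substituting $P-1=A-A\succ\dun$ and applying the dendriform axiom $(x\succ y)\prec z=x\succ(y\prec z)$ together with the second reduction formula, I would compute
\[
(P-1)\prec Q=A\prec Q-(A\succ\dun)\prec Q=A\prec Q-A\succ(\dun\prec Q)=A\prec Q-A\succ(1-Q).
\]
Since $A$ lies in the augmentation ideal, $A\succ 1=0$, so the last expression equals $A\prec Q+A\succ Q=A*Q$. Comparing with the previous paragraph yields $A*Q=P\vee Q$, which is the claim.

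The algebraic heart of the argument is therefore a single use of the axiom $(x\succ y)\prec z=x\succ(y\prec z)$; I do not expect that to be the obstacle. The only points demanding care are the sign bookkeeping through $\Sigma$ (so that $1-\Sigma\sL$ and $1-\Sigma\sR$ really are the stated comb sums) and the unit conventions $1\succ\dun=\dun$, $\dun\prec 1=\dun$, $A\succ 1=0$ used implicitly. Concretely, the real content is verifying the three bookkeeping identities $P\succ\dun=P-1$, $\dun\prec Q=1-Q$ and $A\succ\dun=A-(P-1)$ correctly from \eqref{carac_L}, \eqref{carac_R}; once these are in hand, everything else is formal. (An induction on degree using the same recursions is also available, but this direct rewriting seems cleaner.)
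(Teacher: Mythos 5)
Your proof is correct, and it is essentially the paper's argument: the paper dismisses this lemma as ``a simple computation in the dendriform algebra, or even in the sub-algebra $\ncsf$, with easy cancellations,'' and your write-up simply carries out that computation explicitly (the comb recursions $L_{n+1}=L_n\succ\dun$, $R_{m+1}=\dun\prec R_m$, the reindexing identities, one application of $(x\succ y)\prec z=x\succ(y\prec z)$, and the standard unit conventions are all handled correctly). No gap.
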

\begin{proof}
  This is a simple computation in the dendriform algebra, or even in
  the sub-algebra $\ncsf$, with easy cancellations.
\end{proof}

\begin{proposition}
  \label{image_de_tout}
  The dendriform image of 
  \begin{equation}
    \sumt_1 = \sum_T \frac{T}{\aut(T)} \quad \text{is} \quad (1- \Sigma \sL) \vee (1- \Sigma \sR).
  \end{equation}
\end{proposition}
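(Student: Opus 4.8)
The plan is to chain together the three auxiliary lemmas just established, since each does exactly one step of the simplification. First I would invoke Lemma \ref{lemme_auxi_1} to write the dendriform image of $\sumt_1 = \sum_T \frac{T}{\aut(T)}$ as the triple product
\begin{equation*}
  (1-\Sigma \sL) * \phi\left(\sum_{\ell \geq 1} \linear_\ell \right) * (1-\Sigma \sR).
\end{equation*}
The whole point is that $\phi$ is an algebra morphism for $*$, so the dendriform image of the Pre-Lie series $\sumt_1$ is literally this associative product, and nothing combinatorial remains to be done here.

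Next I would use the associativity of the product $*$ to regroup the first two factors, writing the expression as $\bigl[(1-\Sigma \sL) * \phi(\sum_{\ell \geq 1} \linear_\ell )\bigr] * (1-\Sigma \sR)$. Lemma \ref{lemme_auxi_2} then collapses the bracketed part to the single series $\sum_{n \geq 1} n L_n$, so the image becomes $\bigl(\sum_{n \geq 1} n L_n\bigr) * (1-\Sigma \sR)$.

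Finally I would apply Lemma \ref{lemme_auxi_3} verbatim, which identifies this last product with $(1-\Sigma\sL)\vee (1-\Sigma\sR)$. This is exactly the claimed dendriform image of $\sumt_1$, completing the argument.

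There is essentially no obstacle: all the genuine content sits inside the three cited lemmas, and the only thing to be careful about is the bookkeeping of which factor is grouped with which, i.e.\ a single application of associativity of $*$. The one mild point worth stating explicitly is that $\phi$ respects the associative product, so that the dendriform image of the operadic series $\sumt_1$ really is the $*$-product of the dendriform images of its pieces, legitimizing the use of Lemma \ref{lemme_auxi_1} as the starting point.
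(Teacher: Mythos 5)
Your proof is correct and follows exactly the same route as the paper, which likewise deduces the proposition by chaining Lemmas \ref{lemme_auxi_1}, \ref{lemme_auxi_2} and \ref{lemme_auxi_3} in that order. The only difference is that you spell out the associativity bookkeeping and the compatibility of $\phi$ with $*$, which the paper leaves implicit.
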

\begin{proof}
  This follows from the Lemmas \ref{lemme_auxi_1},
  \ref{lemme_auxi_2} and \ref{lemme_auxi_3}.
\end{proof}

\begin{lemma}
  \label{inverse_cool}
  In $\gp_{\dend}$, the inverse of $(1- \Sigma \sL) \vee (1- \Sigma
  \sR)$ is $(1+ \sL) \vee (1+ \sR)$.
\end{lemma}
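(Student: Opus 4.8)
The plan is to deduce this from the fact that $\phi$ is a morphism of operads, hence induces a morphism of the associated groups from $\gp_{\prelie}$ to $\gp_{\dend}$ commuting with the composition law, together with the inversion formula of Lemma \ref{inversion_somme_tous}. By Proposition \ref{image_de_tout}, the element $(1-\Sigma\sL)\vee(1-\Sigma\sR)$ is exactly the dendriform image $\phi(\sumt_1)$. Since $\phi$ respects the group structure, the inverse of $(1-\Sigma\sL)\vee(1-\Sigma\sR)$ in $\gp_{\dend}$ is $\phi(\sumt_1^{-1})$, and Lemma \ref{inversion_somme_tous} tells us that $\sumt_1^{-1}=\sumt_{-1}$. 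So the whole problem reduces to identifying $\phi(\sumt_{-1})$ with $(1+\sL)\vee(1+\sR)$.

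The second step is to observe that $\sumt_{-1}=\Sigma\,\sumt_1$. This is immediate from the definition $\sumt_k=\sum_T k^{\#T-1}\,T/\aut(T)$: the degree of a tree $T$ is $\#T$, so applying the sign $(-1)^{\#T-1}$ to the all-ones coefficients of $\sumt_1$ recovers the coefficients of $\sumt_{-1}$. Moreover $\phi$ preserves degree, since a tree with $n$ vertices is sent to a combination of planar binary trees with $n$ inner vertices, and therefore commutes with the suspension $\Sigma$ (which acts by a sign depending only on the degree). Consequently $\phi(\sumt_{-1})=\phi(\Sigma\,\sumt_1)=\Sigma\,\phi(\sumt_1)=\Sigma\big((1-\Sigma\sL)\vee(1-\Sigma\sR)\big)$, and it only remains to compute this suspension.

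The last and only computational step is to show that $\Sigma\big((1-\Sigma\sL)\vee(1-\Sigma\sR)\big)=(1+\sL)\vee(1+\sR)$. Here I would expand both factors into homogeneous components, $1-\Sigma\sL=\sum_{p\ge 0}a_p$ and $1-\Sigma\sR=\sum_{q\ge 0}b_q$, and use that $a_p\vee b_q$ is homogeneous of degree $p+q+1$, because $\vee$ glues on a fresh vertex and so adds $1$ to the total degree. Applying $\Sigma$ then multiplies the $(p,q)$-term by $(-1)^{p+q}=(-1)^p(-1)^q$, so by bilinearity of $\vee$ the suspension factorises as $\big(\sum_p(-1)^p a_p\big)\vee\big(\sum_q(-1)^q b_q\big)$. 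Finally the signs recombine: in each factor the degree-$0$ term $1$ is unchanged, while in positive degree the two signs coming from $\Sigma$ cancel the sign carried by $-\Sigma\sL$ (resp. $-\Sigma\sR$), turning $1-\Sigma\sL$ into $1+\sL$ and $1-\Sigma\sR$ into $1+\sR$, using $\Sigma^2=\Id$.

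The main obstacle, such as it is, lies entirely in this last step: one must keep careful track of the interaction of the suspension with the unit term of each factor and with the degree shift built into $\vee$, since it is precisely the matching of the sign $(-1)^{p+q}$ produced by $\Sigma$ with the product of signs $(-1)^p(-1)^q$ that makes the factorisation go through. Everything else is a formal consequence of $\phi$ being a group morphism and of Lemma \ref{inversion_somme_tous}.
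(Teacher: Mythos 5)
Your proposal is correct and follows the same route as the paper's own proof: invert $\sumt_1$ via Lemma \ref{inversion_somme_tous}, identify $\sumt_{-1}$ with $\Sigma\,\sumt_1$, and transport through $\phi$ using Proposition \ref{image_de_tout} and functoriality of the group construction. The only difference is that you carry out explicitly the final suspension computation $\Sigma\bigl((1-\Sigma\sL)\vee(1-\Sigma\sR)\bigr)=(1+\sL)\vee(1+\sR)$, which the paper leaves implicit; your sign bookkeeping there is correct.
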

\begin{proof}
  By lemma \ref{inversion_somme_tous}, the inverse of $\sumt_1$ is
  $\sumt_{-1}$, which is the suspension of $\sumt_1$.

  The result then follows from proposition \ref{image_de_tout}, by
  functoriality of the group construction.
\end{proof}

\begin{lemma}
  \label{nice_compo}
  One has
  \begin{equation}
    \Sigma \sR \circ ((1+\sL) \vee (1+\sR)) = -\sL.
  \end{equation}
\end{lemma}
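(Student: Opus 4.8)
The plan is to turn the defining equation \eqref{carac_R} of $\sR$ into a fixed-point equation for $N := \Sigma\sR\circ\big((1+\sL)\vee(1+\sR)\big)$, and then to check that $-\sL$ solves the very same equation.

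First I would record the suspended form of \eqref{carac_R}. Since $\Sigma$ multiplies a homogeneous term of degree $n$ by $(-1)^{n-1}$, one has $\Sigma(x\prec y)=-(\Sigma x)\prec(\Sigma y)$, so applying $\Sigma$ to \eqref{carac_R} and using $\Sigma\dun=\dun$ gives
$$\Sigma\sR=\dun-\dun\prec\Sigma\sR.$$
Writing $W=(1+\sL)\vee(1+\sR)$ and composing on the right by $W$ in $\gp_{\dend}$, linearity of $f\mapsto f\circ W$ together with $\dun\circ W=W$ reduces the claim to evaluating $(\dun\prec\Sigma\sR)\circ W$.

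The key step is to move this composition through the $\vee$-structure. By \eqref{def_vee} one has $\dun\prec\Sigma\sR=1\vee\Sigma\sR$. Right composition by $W$ is the specialisation $f\circ W=f\diamond(W,W)$ of the operation $\diamond$ (this is precisely the identity used in passing from \eqref{rela_ect_e} to \eqref{flow_is_tree_of_connected}), so Lemma \ref{diamant_vee} with $z=t=W$ gives
$$(1\vee\Sigma\sR)\circ W=(1\circ W)\vee_W(\Sigma\sR\circ W)=1\vee_W N=W\prec N,$$
using $1\circ W=1$ and, by \eqref{def_vee}, $1\vee_W N=1\succ W\prec N=W\prec N$. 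Hence $N$ satisfies the fixed-point equation $N=W-W\prec N$. This manipulation is the only genuinely structural input, and is where the argument must be handled with care: one has to be sure that the unit tree $1$ is admissible in Lemma \ref{diamant_vee} and that $\circ W$ is indeed $\diamond(W,W)$, which is exactly the appendix compatibility between $\circ$ and $\diamond$.

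It then remains to verify that $-\sL$ solves $X=W-W\prec X$ and that this equation has a unique solution. For the first point, $W-W\prec(-\sL)=W+W\prec\sL=W\prec(1+\sL)$ using the unital convention $W\prec 1=W$; by relation \eqref{vee_star} and Lemma \ref{L_inverse_R} in the form $(1+\sR)*(1+\sL)=1$ (valid since $1+\sL$ and $1+\sR$ are mutually inverse units) this equals $(1+\sL)\vee 1=(1+\sL)\succ\dun=\dun+\sL\succ\dun$, which is $-\sL$ by \eqref{carac_L}. For uniqueness, the map $X\mapsto W\prec X$ strictly raises degree (both factors have degree $\geq 1$), so $X=W-W\prec X$ determines $X$ degree by degree, with $X_1=\dun$. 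Therefore $N=-\sL$, which is the assertion of the lemma.
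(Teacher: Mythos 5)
Your proof is correct, and it is a genuine mirror image of the paper's argument rather than a reproduction of it. The paper starts from the characterisation \eqref{carac_L} of the \emph{target}: it reduces the claim to showing that $N=\Sigma\sR\circ W$ satisfies $N=\dun-N\succ\dun$, then strips off the composition by substituting the explicit inverse $W^{-1}=(1-\Sigma\sL)\vee(1-\Sigma\sR)$ from Lemma \ref{inverse_cool}, suspends, and lands on \eqref{carac_R}. You instead start from the characterisation \eqref{carac_R} of the \emph{source}: you suspend it, push the right composition by $W$ through the dendriform product via Lemma \ref{diamant_vee} (equivalently, the fact that $-\circ W=-\diamond(W,W)$ is an endomorphism for $\prec$ and $\succ$), obtain the new fixed-point equation $N=W-W\prec N$, and verify that $-\sL$ is its unique solution. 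The toolkits overlap heavily --- both arguments use \eqref{vee_star}, the reversed form $(1+\sR)*(1+\sL)=1$ of Lemma \ref{L_inverse_R}, the sign rules for $\Sigma$, and a degree-by-degree uniqueness principle --- but you trade the explicit inverse of $W$ (Lemma \ref{inverse_cool}) for the $\vee$/$\diamond$ compatibility (Lemma \ref{diamant_vee}), which makes your version slightly more self-contained. The two points you flag as delicate (extending Lemma \ref{diamant_vee} to the unit tree $1$, and $f\circ W=f\diamond(W,W)$) are both covered by the paper's conventions (the explicit extension of $\vee_y$ to unit arguments, and the $t=u$ specialisation of $\diamond$ in the appendix), so there is no gap.
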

\begin{proof}
  By equation \eqref{carac_L}, it is enough to prove that
  \begin{equation*}
    \Sigma\sR \circ \left((1+\sL)\vee (1+\sR)\right) = \dun - (\Sigma\sR \circ \left((1+\sL)\vee (1+\sR)\right)) \succ \dun.
  \end{equation*}
  By composition with the inverse of $(1+\sL)\vee (1+\sR)$ given by
  lemma \ref{inverse_cool}, this is equivalent to
  \begin{equation*}
    \Sigma\sR = (1-\Sigma \sL)\vee (1-\Sigma \sR) - \Sigma\sR \succ \left((1-\Sigma \sL)\vee (1-\Sigma \sR) \right).
  \end{equation*}
  By suspension, this is the same as
  \begin{equation*}
    \sR = (1+ \sL)\vee (1+ \sR) + \sR \succ \left((1+ \sL)\vee (1+ \sR) \right).
  \end{equation*}
  By Lemma \ref{L_inverse_R} and \eqref{vee_star}, this is equivalent to
  \begin{equation*}
    \sR = 1 \vee (1 + \sR),
  \end{equation*}
  which is just the equation \eqref{carac_R}.
\end{proof}

\section{Series in $\ncsf$} %$\pp$ and $\mm$}

Let $\dP_t$ and $\dN_t$ be series in variables $\pp,\mm$ defined by
\begin{equation}
  \dP_t=\sum_{k \geq 1} \ca_{k,t} \pp^k\quad \text{and}\quad  \dN_t=\sum_{k \geq 1} (-1)^k \ca_{k,t} \mm^k,
\end{equation}
where $ca_{k,t}$ are the $t$-Narayana fractions defined in \S \ref{dyck}, and let $\dP$ (resp. $\dN$) be $\dP_{t=0}$ (resp. $\dN_{t=0}$). 

These series can be considered as ordinary generating series for flows
on linear trees, see section \ref{dyck}.

\begin{lemma}
  \label{equation_pt_nt}
  One has
  \begin{equation}
    \label{defi_pt}
    \dP_t = \frac{1}{1-t} (\emptyset+\dP_t) \pp +\frac{b}{t} (\dP_t - \dP) \pp
  \end{equation}
  and
  \begin{equation}
    \dN_t = \frac{-1}{1-t} \mm (\emptyset+\dN_t) -\frac{b}{t} \mm (\dN_t - \dN).
  \end{equation}
\end{lemma}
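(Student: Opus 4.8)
The plan is to recognise both identities as images, under two simple substitutions, of the single functional equation for the ordinary generating series $E_t$ of flows on linear trees established in \S\ref{dyck}. Recall that $\ca_{n,t}=\xE_{\linear_n,t}$ and that $E_t=\sum_{n\geq 1}\ca_{n,t}x^n$ satisfies
\begin{equation*}
  E_t=\frac{x}{1-t}(1+E_t)+\frac{bx}{t}(E_t-E),
\end{equation*}
a relation obtained by decomposing a flow on $\linear_n$ according to whether its root is an output or not. At this point in the paper this is legitimate, since it is the specialisation to linear trees of the master equation \eqref{master_eq_E} proved in Theorem \ref{master_th_E}. Extracting the coefficient of $x^m$, this equation is equivalent to the recursion $\ca_{m,t}=\frac{1}{1-t}\ca_{m-1,t}+\frac{b}{t}(\ca_{m-1,t}-\ca_{m-1})$ for $m\geq 2$, together with the initial value $\ca_{1,t}=1/(1-t)$.

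For the first identity, I would pass from $E_t$ to $\dP_t$ by the substitution sending $x^n$ to the word $\pp^n$ and $1$ to $\emptyset$; equivalently, letting the variable $x$ act as right multiplication by the letter $\pp$. Since $\pp^{n}\pp=\pp^{n+1}$ and $\dP=E|_{t=0}$, the series $\dP_t=E_t|_{x\mapsto\pp}$ transforms the displayed equation for $E_t$ termwise into
\begin{equation*}
  \dP_t=\frac{1}{1-t}(\emptyset+\dP_t)\pp+\frac{b}{t}(\dP_t-\dP)\pp,
\end{equation*}
which is \eqref{defi_pt}. Concretely, one checks that the coefficient of $\pp^m$ on the right-hand side reproduces exactly the recursion above for $\ca_{m,t}$, while the coefficient of $\pp$ reproduces $\ca_{1,t}=1/(1-t)$.

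For the second identity, I would instead use the substitution $x\mapsto -\mm$ acting by left multiplication, so that $x^n\mapsto(-1)^n\mm^n$; this sends $E_t$ to $\dN_t$ and $E$ to $\dN$. Applying it to the equation for $E_t$ produces
\begin{equation*}
  \dN_t=\frac{-1}{1-t}\mm(\emptyset+\dN_t)-\frac{b}{t}\mm(\dN_t-\dN),
\end{equation*}
the factor $(-1)^n$ accounting for the two overall minus signs. Again one may verify this by extracting the coefficient of $\mm^m$ and using $(-1)^m=-(-1)^{m-1}$ to match the sign pattern of the recursion.

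There is no serious obstacle here: the content of the lemma is entirely carried by the functional equation for $E_t$, and the only points requiring care are (i) confirming that this equation is available at this stage, which it is as a specialisation of Theorem \ref{master_th_E}, and (ii) handling correctly the non-commutative placement of the letters ($\pp$ on the right, $\mm$ on the left) together with the sign $(-1)^n$ in the second case, both of which are purely a matter of bookkeeping.
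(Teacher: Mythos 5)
Your proof is correct and rests on the same idea as the paper's: the decomposition of a flow on a linear tree according to whether the root is an output, which is exactly the content of the functional equation for $E_t$ from \S\ref{dyck} that you transport to the non-commutative variables $\pp$ and $\mm$. The paper's own proof simply invokes that decomposition directly; your version adds only the (correct) bookkeeping of the substitutions $x\mapsto\pp$ on the right and $x\mapsto-\mm$ on the left.
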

\begin{proof}
  This follows from the fact that the coefficients $\ca_{k,t}$ count
  flows on linear rooted trees. One has to decompose according to
  whether the root is an output or not, as already done in the proof
  of Theorem \ref{main}.
\end{proof}

One will also need connected variants of $\dP$ and $\dN$, defined by
\begin{equation}
  \label{defi_pc_nc}
  \dP^c= (\emptyset+b\dP)\pp \quad \text{and}\quad \dN^c= -\mm(\emptyset+b\dN).
\end{equation}

By \eqref{usual_eq}, these series are generating series for connected
closed flows on linear trees. Let us now consider the similar series
$\dP^c_t$ for arbitrary connected flows on linear trees.

Every flow on a linear tree can be decomposed as a list of connected
components, all but one are closed. One therefore has
\begin{equation}
  \label{decoupe_flot_lineaire}
  \dP_t=\dP^c_{t}+\dP_t \dP^c.
\end{equation}

A connected flow on a linear tree is either closed, or one can remove one layer of rate on every edge, and obtain any linear flow. This implies that
\begin{equation}
  \label{flot_lineaire_connexe}
  \dP^c_t = \dP^c + t \dP_t.
\end{equation}

\begin{proposition}
  The series $\dP^c$ and $\dN^c$ satisfy
  \begin{equation}
    \label{prop_pc_nc}
    (1-t) \dP_t=\dP^c+ \dP^c \dP_t \quad \text{and} \quad  (1-t) \dN_t=\dN^c+ \dN_t \dN^c.
  \end{equation}
\end{proposition}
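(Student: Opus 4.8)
The plan is to establish the first identity by eliminating the auxiliary series $\dP^c_t$ between the two combinatorial equations already available for flows on linear trees, and then to deduce the second identity from the first by the elementary symmetry exchanging $\pp$ with $-\mm$. No new combinatorics should be needed beyond what has already been set up in the subsection.

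First I would combine the two relations \eqref{decoupe_flot_lineaire} and \eqref{flot_lineaire_connexe}, namely $\dP_t = \dP^c_t + \dP_t \dP^c$ and $\dP^c_t = \dP^c + t\dP_t$. Substituting the second into the first eliminates $\dP^c_t$ and gives $\dP_t = \dP^c + t\dP_t + \dP_t \dP^c$, hence $(1-t)\dP_t = \dP^c + \dP_t \dP^c$. The crucial observation is that $\dP_t$ and $\dP^c$ both lie in the commutative subalgebra generated by the single letter $\pp$: indeed $\dP_t = \sum_k \ca_{k,t}\pp^k$ and $\dP^c = (\emptyset + b\dP)\pp$ by \eqref{defi_pc_nc}. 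Therefore $\dP_t \dP^c = \dP^c \dP_t$, and the first claimed identity $(1-t)\dP_t = \dP^c + \dP^c \dP_t$ follows at once.

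For the second identity I would argue by symmetry rather than repeat the decomposition. Consider the $\QQ(b,t)$-algebra homomorphism $\sigma$ sending $\pp \mapsto -\mm$ from the subalgebra generated by $\pp$ onto the subalgebra generated by $\mm$. From the defining series one checks directly that $\sigma(\dP_t) = \dN_t$ and $\sigma(\dP) = \dN$; and using \eqref{defi_pc_nc} together with the commutativity of the $\mm$-subalgebra one gets $\sigma(\dP^c) = \sigma\big((\emptyset + b\dP)\pp\big) = -(\emptyset + b\dN)\mm = -\mm(\emptyset + b\dN) = \dN^c$. Applying $\sigma$ to the already proved identity $(1-t)\dP_t = \dP^c + \dP^c \dP_t$ then yields $(1-t)\dN_t = \dN^c + \dN^c \dN_t$, and commutativity in the $\mm$-subalgebra turns this into the stated form $(1-t)\dN_t = \dN^c + \dN_t \dN^c$.

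The only genuinely delicate point is the bookkeeping of noncommutative order: a priori the letters $\pp$ and $\mm$ do not commute in $\ncsf$, so one must verify that every series entering a given identity actually lies in a single-letter, hence commutative, subalgebra before transposing factors freely. Once this is checked the argument is purely formal. Alternatively, if one prefers to avoid invoking $\sigma$, the $\dN$-analogues of \eqref{decoupe_flot_lineaire} and \eqref{flot_lineaire_connexe} can be obtained by exactly the same decomposition of a flow on a linear tree into its connected components, the only change being that the root component is written on the left because of the shape $\dN^c = -\mm(\emptyset + b\dN)$; eliminating the analogue of $\dP^c_t$ then reproduces the second identity directly.
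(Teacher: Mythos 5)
Your proposal is correct and follows essentially the same route as the paper: the first identity is obtained by eliminating $\dP^c_t$ between \eqref{decoupe_flot_lineaire} and \eqref{flot_lineaire_connexe}, and the second follows by the symmetry exchanging $\pp$ and $-\mm$. Your extra care about the order of factors (everything lives in a single-letter, hence commutative, subalgebra) is a harmless elaboration of a point the paper leaves implicit.
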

\begin{proof}
  It is enough to consider the case of $\dP$, by symmetry under the
  exchange of $\pp$ and $-\mm$. The equation follows directly from
  \eqref{decoupe_flot_lineaire} and \eqref{flot_lineaire_connexe}.
\end{proof}

Let us now define three series involving both variables $\pp$ and
$\mm$.

The series $\dT$ is defined by
\begin{equation}
  \label{defi_dt}
  \dT=\sum_{k \geq 0} b^k (\dP \dN)^k,
\end{equation}
and does not depend on the variable $t$.

The series $\dU_t$ is then defined by
\begin{equation}
  \label{def_u}
  \dU_t=(\emptyset+b \dN)\dT\dP_t.
\end{equation}
The first few terms of $\dU_t$ are 
\begin{equation*}
  \ca_{1,t} \pp + \ca_{2,t} \pp\pp -b\ca_{1,t}\mm\pp + \dots
\end{equation*}

The series $\dV_t$ is similarly defined by
\begin{equation}
  \label{def_v}
  \dV_t=\dN_t \dT (\emptyset+b \dP) .
\end{equation}
Its first few terms are 
\begin{equation*}
  - \ca_{1,t} \mm + \ca_{2,t} \mm\mm  - b\ca_{1,t}\mm\pp + \dots
\end{equation*}

Let $\dU$ (resp. $\dV$) be $\dU_{t=0}$ (resp. $\dV_{t=0}$).

\begin{lemma}
  One has
  \begin{equation}
    \label{droite_u_plus}
    \dU_t = \frac{1}{1-t}  ((\emptyset+b\dN)\dT+\dU_t) \pp +\frac{b}{t}  (\dU_t - \dU) \pp
  \end{equation}
  and
  \begin{equation}
    \dV_t = \frac{-1}{1-t} \mm (\dT(\emptyset+b\dP)+\dV_t) -\frac{b}{t} \mm (\dV_t - \dV).
  \end{equation}
\end{lemma}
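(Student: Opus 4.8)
The plan is to derive both identities directly from Lemma~\ref{equation_pt_nt} by multiplying those equations by a suitable factor that is independent of $t$.

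For the first identity, I would set $W = (\emptyset + b\dN)\dT$ and note that $W$ does not depend on $t$, since neither $\dN$ nor $\dT$ does. By definition~\eqref{def_u} one then has $\dU_t = W \dP_t$, and consequently $\dU = W\dP$. Starting from the equation for $\dP_t$ in Lemma~\ref{equation_pt_nt} and multiplying it on the \emph{left} by $W$, the scalar coefficients commute through and the letter $\pp$ remains at the right of every term, so that
\[
  W\dP_t = \frac{1}{1-t} W(\emptyset + \dP_t)\pp + \frac{b}{t} W(\dP_t - \dP)\pp .
\]
It then remains only to rewrite $W\dP_t = \dU_t$, $W\dP = \dU$ and $W(\emptyset + \dP_t) = W + \dU_t = (\emptyset + b\dN)\dT + \dU_t$, which reproduces~\eqref{droite_u_plus} verbatim.

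For the second identity I would argue symmetrically, now multiplying on the \emph{right}. Put $W' = \dT(\emptyset + b\dP)$, which is again free of $t$, so that by~\eqref{def_v} one has $\dV_t = \dN_t W'$ and $\dV = \dN W'$. Multiplying the equation for $\dN_t$ in Lemma~\ref{equation_pt_nt} on the right by $W'$, and using that the letter $\mm$ stays at the left of every term, yields the stated formula once one rewrites $\dN_t W' = \dV_t$, $\dN W' = \dV$ and $(\emptyset + \dN_t)W' = W' + \dV_t = \dT(\emptyset + b\dP) + \dV_t$.

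There is no genuine obstacle here; the only point that requires attention is the non-commutativity of the algebra generated by $\pp$ and $\mm$. One must multiply on the correct side—on the left for $\dU_t$, so as to preserve the trailing $\pp$, and on the right for $\dV_t$, so as to preserve the leading $\mm$—and one must genuinely use that the inserted factor $W$ (respectively $W'$) carries no dependence on $t$, which is precisely what allows the differences $\dU_t - \dU$ and $\dV_t - \dV$ to appear.
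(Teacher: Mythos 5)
Your proof is correct and is exactly the argument the paper intends: the paper's own proof consists of the single sentence ``This follows from Lemma~\ref{equation_pt_nt} and the definition of $\dU_t$ and $\dV_t$,'' and your computation --- multiplying the $\dP_t$ equation on the left by the $t$-independent factor $(\emptyset+b\dN)\dT$ and the $\dN_t$ equation on the right by $\dT(\emptyset+b\dP)$ --- is precisely the verification being left to the reader. Your remarks about which side to multiply on and the $t$-independence of the inserted factor are the right points of care.
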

\begin{proof}
  This follows from Lemma \ref{equation_pt_nt} and the definition of
  $\dU_t$ and $\dV_t$.
\end{proof}

One will need an involution (called the \textbf{bar involution}) on
the space of non-commutative formal power series in two variables
$\mm,\pp$. It is the unique anti-morphism of algebra defined on
generators by $\overline{\pp}=-\mm$ and $\overline{\mm}=-\pp$.

Under the bar involution, $\dN_t$ and $\dP_t$ are exchanged, $\dT$ is
fixed and $\dU_t$ and $\dV_t$ are exchanged.

\section{Series in the dendriform group}

Let $\sU_t$ be the
unique dendriform series whose right-completed canopy is given by $\dU_t$ :
\begin{equation}
  \sU_t= \ca_{1,t} \dun + \ca_{2,t} \arb{12}  -b\ca_{1,t}\arb{21} + \dots
\end{equation}
and let $\sV_t$ be the unique dendriform series whose left-completed canopy is given
by $\dV_t$ :
\begin{equation}
  \sV_t= - \ca_{1,t} \dun + \ca_{2,t} \arb{21}  - b\ca_{1,t}\arb{12} + \dots. 
\end{equation}

It follows from this definition that, under the bar involution on
dendriform series, one has $\overline{\sU}_t=-\sV_t$.

\begin{lemma}
  \label{lemmeVU}
  Let $u,v$ be two indeterminates. One has
  \begin{equation}
    (1+\sV_v) * (1+ \sU_u) = 1 + (v-u) \dN_v T \dP_u,
  \end{equation}
  where $\dN_v T \dP_u$ has to be interpreted as the sum over planar
  binary trees with the given full canopy.
\end{lemma}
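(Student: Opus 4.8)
The plan is to push the identity down to the subalgebra $\ncsf$ and then to a purely combinatorial identity of noncommutative series in $\pp$ and $\mm$. The key observation is that all three series involved lie in the image of the algebra inclusion $j\colon\ncsf\to\dend$ recalled above: the coefficient of a planar binary tree $T$ in $\sU_u$ (resp.\ in $\sV_v$, resp.\ in the series $\dN_v\dT\dP_u$ read off through full canopies) depends only on the canopy of $T$, since the right-completed, left-completed and full canopies are all functions of the canopy. Hence one may write $\sU_u=j(U)$, $\sV_v=j(V)$ and, for the right-hand side, $1+(v-u)\,j(W)$, with $U,V,W\in\ncsf$. Because $j$ is an injective morphism of algebras for $*$, the assertion becomes $(1+V)*(1+U)=1+(v-u)W$ in $\ncsf$, that is, after expanding and deleting the unit, $U+V+V*U=(v-u)W$.

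Next I would encode $\ncsf$ by canopy words, writing $\widehat A=\sum_\epsilon[A]_\epsilon\,\epsilon$. The ribbon rule $\epsilon*\delta=\epsilon\pp\delta+\epsilon\mm\delta$ says exactly that $\widehat{A*B}=\widehat A\,(\pp+\mm)\,\widehat B$, so the $\ncsf$ identity reads $\widehat U+\widehat V+\widehat V(\pp+\mm)\widehat U=(v-u)\widehat W$. The normalisation of $\sU_u$ by its right-completed canopy gives $\widehat U\,\pp=\dU_u$, that of $\sV_v$ by its left-completed canopy gives $\mm\,\widehat V=\dV_v$, and the full-canopy reading of the right-hand side gives $\mm\,\widehat W\,\pp=\dN_v\dT\dP_u$. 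Multiplying the identity on the left by $\mm$ and on the right by $\pp$ to restore the stripped boundary letters, the whole claim reduces to the single identity of noncommutative series
\begin{equation*}
  \mm\,\dU_u+\dV_v\,\pp+\dV_v\,(\pp+\mm)\,\dU_u=(v-u)\,\dN_v\dT\dP_u,
\end{equation*}
in which $\dU_u=(\emptyset+b\dN)\dT\dP_u$ and $\dV_v=\dN_v\dT(\emptyset+b\dP)$ by \eqref{def_u} and \eqref{def_v}. A quick check of the lowest word $\mm\pp$ already produces the factor through $\ca_{1,u}-\ca_{1,v}=\frac{1}{1-u}-\frac{1}{1-v}=\frac{u-v}{(1-u)(1-v)}$, which confirms the normalisation and pinpoints the mechanism.

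To prove this last identity I would start from the factorisation
\begin{equation*}
  (\emptyset+b\dP)\,(\pp+\mm)\,(\emptyset+b\dN)=\dP^c(\emptyset+b\dN)-(\emptyset+b\dP)\dN^c,
\end{equation*}
which follows from $(\emptyset+b\dP)\pp=\dP^c$ and $\mm(\emptyset+b\dN)=-\dN^c$ of \eqref{defi_pc_nc}. Inserting it into the cross term and using the same two relations on the first two terms, one regroups the target into
\begin{equation*}
  \dN_v\dT\dP^c(\emptyset+\dU_u)-(\emptyset+\dV_v)\dN^c\dT\dP_u=(v-u)\,\dN_v\dT\dP_u.
\end{equation*}
At this point the connected-flow relations \eqref{prop_pc_nc}, namely $(1-u)\dP_u=\dP^c(\emptyset+\dP_u)$ and $(1-v)\dN_v=(\emptyset+\dN_v)\dN^c$, rewrite $\dP_u$ and $\dN_v$ as one-sided resolvents in $u$ and $v$, and the identity $\frac{1}{1-u}-\frac{1}{1-v}=\frac{u-v}{(1-u)(1-v)}$ then produces the prefactor $v-u$.

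The main obstacle is the final bookkeeping: the awkward factor $\emptyset+\dU_u$ does not telescope as cleanly as $\emptyset+\dP_u$, and one must show that the correction terms carrying the values $\dU,\dV,\dP,\dN$ at the origin (the $\frac{b}{u}$ and $\frac{b}{v}$ parts of the functional equations \eqref{droite_u_plus} and Lemma~\ref{equation_pt_nt}) cancel exactly, leaving no residual. I would handle this either by induction on the word length, feeding the recursions \eqref{droite_u_plus} for $\dU_u$ and $\dV_v$ into the regrouped identity, or by comparing the poles and residues in $u$ and in $v$ of the two sides, which are rational series with poles only at $u=1$ and $v=1$; agreement of residues there, together with agreement of the regular parts, would force equality.
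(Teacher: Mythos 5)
Your reduction of the lemma to the word-level identity
\begin{equation*}
  \mm\,\dU_u+\dV_v\,\pp+\dV_v\,(\pp+\mm)\,\dU_u=(v-u)\,\dN_v\dT\dP_u
\end{equation*}
is exactly the paper's starting point (the paper phrases it as a computation of $(1+\sV_v)*(1+\sU_u)-1$ through full canopies, which produces the same four terms $\dV_v\pp+\mm\dU_u+\dV_v\pp\dU_u+\dV_v\mm\dU_u$), and your first regrouping via $(\emptyset+b\dP)\pp=\dP^c$ and $\mm(\emptyset+b\dN)=-\dN^c$ reproduces the paper's second displayed expression. But the proof is not complete: the final cancellation, which is the actual content of the computation, is left as an acknowledged obstacle with two unexecuted fallback strategies. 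The missing ingredient is not the functional equations \eqref{droite_u_plus} or Lemma~\ref{equation_pt_nt} — those play no role here and your worry about residual "$b/u$ and $b/v$ correction terms" is a red herring — but the geometric-series recursion $\dT=\emptyset+b\,\dT\dP\dN=\emptyset+b\,\dP\dN\dT$, which follows from the definition \eqref{defi_dt} of $\dT$.

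Concretely: after expanding $(\emptyset+b\dP)$ and $(\emptyset+b\dN)$ in the cross terms, one uses that recursion to split $\dN_v\dT\dP^c\dT\dP_u$ as $\dN_v\dT\dP^c\dP_u+b\,\dN_v\dT\dP^c\dP\dN\dT\dP_u$ and $\dN_v\dT\dN^c\dT\dP_u$ as $\dN_v\dN^c\dT\dP_u+b\,\dN_v\dT\dP\dN\dN^c\dT\dP_u$. Then \eqref{prop_pc_nc} is applied four times: at $t=u$ and $t=v$ to get $\dN_v\dT\dP^c(\emptyset+\dP_u)=(1-u)\dN_v\dT\dP_u$ and $(\emptyset+\dN_v)\dN^c\dT\dP_u=(1-v)\dN_v\dT\dP_u$, and at $t=0$ to get $\dP^c(\emptyset+\dP)=\dP$ and $(\emptyset+\dN)\dN^c=\dN$, which makes the four $b$-terms cancel in pairs. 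This yields $(1-u)\dN_v\dT\dP_u-(1-v)\dN_v\dT\dP_u=(v-u)\dN_v\dT\dP_u$ with no leftover. Your regrouping into $\dN_v\dT\dP^c(\emptyset+\dU_u)-(\emptyset+\dV_v)\dN^c\dT\dP_u$ is valid but steers you toward the "awkward factor" $\emptyset+\dU_u$ precisely because you have not yet used the $\dT$-recursion to convert $\dP^c(\emptyset+b\dN)\dT$ into something \eqref{prop_pc_nc} can absorb; once you insert that recursion the telescoping is forced and neither an induction on word length nor a pole/residue analysis is needed.
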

\begin{proof}
  This is in fact a computation inside series in $\pp$ and $\mm$, by
  the correspondence between a monomial in $\pp$ and $\mm$ and the sum
  of all planar binary trees having this monomial as their full
  canopy.
  
  Let us compute $(1+\sV_v) * (1+ \sU_u) -1$. One finds
  \begin{multline*}
    \dN_v\dT(\emptyset+b\dP)\pp + \mm (\emptyset + b\dN) \dT \dP_u + \dN_v\dT(\emptyset+b\dP)\pp(\emptyset + b\dN) \dT \dP_u \\+\dN_v\dT(\emptyset+b\dP) \mm (\emptyset + b\dN) \dT \dP_u .
  \end{multline*}
  Using the definition \eqref{defi_pc_nc} of $\dN^c$ and $\dP^c$, one gets
  \begin{equation*}
    \dN_v\dT\dP^c -\dN^c \dT \dP_u + \dN_v\dT\dP ^c (\emptyset + b\dN) \dT \dP_u -\dN_v\dT(\emptyset+b\dP)\dN^c \dT \dP_u .
  \end{equation*}
  Expanding the products, one obtains
  \begin{multline*}
    \dN_v\dT\dP^c -\dN^c \dT \dP_u + \dN_v\dT\dP ^c \dT \dP_u+ b \dN_v\dT\dP ^c \dN \dT \dP_u \\ -\dN_v\dT\dN^c \dT \dP_u-b \dN_v\dT \dP\dN^c \dT \dP_u  .
  \end{multline*}
  One can then use the fact that $\dT=\emptyset+b \dT \dP \dN=\emptyset+b \dP \dN \dT$ to split the third and fifth terms, getting
  \begin{multline*}
     \dN_v\dT\dP^c -\dN^c \dT \dP_u + \dN_v\dT \dP ^c \dP_u+ b \dN_v\dT \dP ^c \dP \dN \dT \dP_u+ b \dN_v\dT\dP ^c \dN\dT \dP_u \\-\dN_v\dN^c \dT \dP_u -b \dN_v\dT \dP \dN \dN^c \dT \dP_u-b \dN_v\dT \dP\dN^c \dT \dP_u .   
  \end{multline*}
  Now gathering terms by pairs and using four times the equation
  \eqref{prop_pc_nc}, one gets, after some
  cancellations,
  \begin{equation*}
    (1-u) \dN_v \dT \dP_u - (1-v) \dN_v \dT \dP_u,
  \end{equation*}
  which is the expected result.
\end{proof}

\subsection{Flows in the dendriform group}

Let us now consider two series $\sD_t$ and $\sE_t$. Our aim will be to
show that they are the respective dendriform images of the series
$\xD_t$ and $\xE_t$.

The series $\sD_t$ is defined by
\begin{equation}
  \label{defi_dt}
  \sD_t= (1+\sU_t)\vee(1+\sV_t),
\end{equation}
and its first few terms are
\begin{equation*}
  \dun + \ca_{1,t} \arb{12} - \ca_{1,t} \arb{21}  +\dots
\end{equation*}
The series $\sD$ is the value of $\sD_t$ at $t=0$.

The series $\sE_t$ is then defined by
\begin{equation}
  \label{defi_et}
  \sE_t =\frac{1}{1-t} \sD_t + \frac{b}{t} (\sD_t-\sD).
\end{equation}
Its first few terms are
\begin{equation*}
\ca_{1,t} \dun + \ca_{2,t} \arb{12} - \ca_{2,t} \arb{21} + \dots
\end{equation*}
The series $\sE$ is the value of $\sE_t$ at $t=0$.

From these definitions, it results that both $\sE_t$ and $\sD_t$ are
fixed under the bar involution of $\dend$.

\begin{proposition}
  \label{uv_from_et}
  One has the following relations
  \begin{equation}
    \sU_t =  \sR \circ \sE_t \quad \text{and}\quad    \sV_t =  \sL \circ \sE_t.
  \end{equation}
\end{proposition}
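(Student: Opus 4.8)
The plan is to characterise both sides of each identity by the same recursive equation and then invoke uniqueness. Write $(-)\circ\sE_t$ for composition in $\gp_{\dend}$; as used in the proof of Lemma \ref{nice_compo}, this operation is a morphism of dendriform algebras sending the generator $\dun$ to $\sE_t$ (in particular $\dun\circ\sE_t=\sE_t$ and $(x\prec y)\circ\sE_t=(x\circ\sE_t)\prec(y\circ\sE_t)$). Applying it to the defining equation \eqref{carac_R} of $\sR$ gives $\sR\circ\sE_t=\sE_t+\sE_t\prec(\sR\circ\sE_t)$, and applying it to \eqref{carac_L} gives $\sL\circ\sE_t=-\sE_t-(\sL\circ\sE_t)\succ\sE_t$. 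Since $\sE_t$ has no term in degree $0$, each half-product $\sE_t\prec(-)$, $(-)\succ\sE_t$ strictly raises degree, so these two equations have unique solutions, computed degree by degree. It therefore suffices to prove that $\sU_t$ and $\sV_t$ satisfy the very same equations, i.e.
\[
  \sU_t=\sE_t\prec(1+\sU_t)\qquad\text{and}\qquad \sV_t=-(1+\sV_t)\succ\sE_t .
\]

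To attack the first of these I would substitute the definition \eqref{defi_et} of $\sE_t$ together with $\sD_t=(1+\sU_t)\vee(1+\sV_t)$, and simplify the resulting half-products with the relations \eqref{vee_star}. The crucial input is that $(1+\sV_t)$ is the $*$-inverse of $(1+\sU_t)$: this is Lemma \ref{lemmeVU} at equal parameters, giving $(1+\sV_t)*(1+\sU_t)=1$. Writing $P=1+\sU_t$, the relation $(x\vee y)\prec z=x\vee(y*z)$ then yields $\sD_t\prec P=(1+\sU_t)\vee\bigl((1+\sV_t)*P\bigr)=P\vee 1$, while the analogous computation of $\sD\prec P$ uses Lemma \ref{lemmeVU} at parameters $(0,t)$ and produces a correction governed by $\dN\dT\dP_t$. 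Collecting the linear combination that defines $\sE_t$ turns the first identity into an explicit equality between $\sU_t$ and the $\vee$-combination of $P\vee 1$, $(1+\sU)\vee 1$ and $(1+\sU)\vee(\dN\dT\dP_t)$.

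The final step is to verify this explicit equality, which I would do by expanding in the planar-binary-tree basis and matching against the recursion \eqref{droite_u_plus} for the right-completed canopy $\dU_t$ of $\sU_t$, using the structural relations among $\dP_t,\dN,\dT$ (such as $\dT=\emptyset+b\dP\dN\dT$ and \eqref{prop_pc_nc}). This is where the real difficulty lies: the auxiliary series $\sD_t$, $\sE_t$ and the products $P\vee 1$ and $(1+\sU)\vee(\dN\dT\dP_t)$ do \emph{not} lie in $\ncsf$, so one cannot argue purely at the level of canopies; the non-ribbon contributions of the $\vee$-terms must be tracked and shown to cancel, so that the $\vee$-combination collapses onto the ribbon series $\sU_t$. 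I expect this bookkeeping, rather than any conceptual point, to be the main obstacle.

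The second identity, $\sV_t=-(1+\sV_t)\succ\sE_t$, is treated in the mirror-image fashion: the relation $x\succ(y\vee z)=(x*y)\vee z$ gives $(1+\sV_t)\succ\sD_t=1\vee(1+\sV_t)$, and one matches against the recursion for the left-completed canopy $\dV_t$. One could instead hope to deduce it from the first via the bar involution, under which $\sE_t$ is fixed and $\sU_t,\sV_t$ are exchanged up to sign; however, the suspension introduces degree-dependent signs, so this route requires care and I would keep the direct, symmetric computation as the primary argument.
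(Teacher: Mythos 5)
Your proposal follows essentially the same route as the paper: reduce via the characteristic equation \eqref{carac_R} (and uniqueness of its solution) to showing $\sU_t = \sE_t + \sE_t \prec \sU_t$, then expand the right-hand side using \eqref{defi_et}, \eqref{defi_dt}, the relations \eqref{vee_star}, and Lemma \ref{lemmeVU} at parameters $(t,t)$ and $(0,t)$, arriving at a sum of $\vee$-terms to be identified with $\sU_t$. The ``bookkeeping'' you flag as the main obstacle is resolved in the paper exactly along the lines you sketch, by decomposing each tree occurring in $\sU_t$ according to the position of its root in the full canopy (four cases) and matching each case with one of the $\vee$-terms via \eqref{droite_u_plus} and \eqref{def_u}; the only cosmetic difference is that the paper dispatches the second identity by the bar involution, under which $\sE_t$ is fixed and $\sU_t$, $\sV_t$ are exchanged up to sign, rather than by the mirror computation you propose.
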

\begin{proof}
  By symmetry under the bar involution, it is enough to prove the
  first equation. By the characteristic property \eqref{carac_R} of
  right combs, one just has to show that
  \begin{equation*}
    \sU_t = \sE_t + \sE_t \prec \sU_t.
  \end{equation*}
  Let us compute the right hand side using \eqref{defi_et}. One finds
  \begin{equation*}
    \frac{1}{1-t} \sD_t + \frac{b}{t} (\sD_t-\sD) + \frac{1}{1-t} \sD_t \prec \sU_t + \frac{b}{t} (\sD_t\prec \sU_t-\sD\prec \sU_t).
  \end{equation*}
  Using then \eqref{defi_dt}, one gets
  \begin{multline*}
    \frac{1}{1-t} (1+\sU_t)\vee(1+\sV_t) + \frac{b}{t}
    ((1+\sU_t)\vee(1+\sV_t) - (1+\sU)\vee(1+\sV) )+ \\ \frac{1}{1-t}
    ((1+\sU_t)\vee(1+\sV_t)) \prec \sU_t + \frac{b}{t} ((
    (1+\sU_t)\vee(1+\sV_t))\prec \sU_t-( (1+\sU)\vee(1+\sV))\prec
    \sU_t),
  \end{multline*}
  which can be rewritten by \eqref{vee_star} as
  \begin{multline*}
    \frac{1}{1-t} (1+\sU_t)\vee((1+\sV_t)*(1+\sU_t)) + \frac{b}{t}
    ((1+\sU_t)\vee((1+\sV_t)*(1+\sU_t)) \\ - (1+\sU)\vee ((1+\sV)*(1+\sU_t) ).
  \end{multline*}
  Using Lemma \ref{lemmeVU}, one can replace $(1+\sV_t)*(1+\sU_t)$
  by $1$. One obtains
  \begin{equation*}
    \frac{1}{1-t} (1+\sU_t)\vee 1 +\frac{b}{t}(1+\sU_t)\vee 1 -\frac{b}{t} (1+\sU)\vee ((1+\sV) * (1+\sU_t)). 
  \end{equation*}
  Using Lemma \ref{lemmeVU} again, one finds
  \begin{equation*}
    \frac{1}{1-t} (1+\sU_t)\vee 1+\frac{b}{t}(1+\sU_t)\vee 1 - \frac{b}{t} (1+\sU)\vee (1-t(\dN T \dP_t)). 
  \end{equation*}
  Expanding that, one gets
  \begin{equation}
    \label{pas_final}
    \frac{1}{1-t} 1\vee 1 +\frac{1}{1-t} \sU_t\vee 1+\frac{b}{t} \sU_t\vee 1 - \frac{b}{t} \sU \vee 1 + b  1 \vee (\dN T \dP_t)+ b \sU \vee (\dN T \dP_t). 
  \end{equation}
  
  We therefore have to show that this expression is simply $\sU_t$.

  To prove that, let us decompose $\sU_t$ according to the position of
  the root in the trees. There are four ways to place the root in the
  full canopy:
  \begin{itemize}
  \item the tree is $\dun$, the root can be put between $\mm$ and $\pp$,
  \item the full canopy ends by $\pp\pp$, the root can be put between them,
  \item the full canopy starts by $\mm\mm$, the root can be put
    between them,
  \item the root can be put after any $\pp$ followed by $\mm$ in the
    full canopy.
  \end{itemize}

  Using equation \eqref{droite_u_plus}, let us describe the first two cases. One gets
  \begin{itemize}
  \item $\frac{1}{1-t} 1 \vee 1$ for the tree $\dun$,
  \item $\frac{1}{1-t} \sU_t\vee 1+\frac{b}{t} \sU_t\vee 1 - \frac{b}{t} \sU \vee 1 $ for the root between $\pp$ and $\pp$.
  \end{itemize}
  
  Using equation \eqref{def_u}, let us describe the last two cases. One gets
  \begin{itemize}
  \item $b  1 \vee (\dN T \dP_t)$ for the root between $\mm$ and $\mm$,
  \item  $b \sU \vee (\dN T \dP_t)$ for the root between $\pp$ and $\mm$.
  \end{itemize}
  Note that the last case is slightly more subtle, as the cut takes
  places inside the $\dT$ factor and one has to use the expression
  \eqref{defi_dt} for the series $\dT$.
  
  It follows that \eqref{pas_final} is exactly the expansion of
  $\sU_t$ according to the possible positions of the root in the
  canopy.
\end{proof}

\begin{corollary}
  One has 
  \begin{equation}
    \sD_t= ((1+\sR) \circ \sE_t) \vee ((1+\sL)\circ \sE_t).
  \end{equation}
\end{corollary}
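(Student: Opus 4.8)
The plan is to deduce the corollary essentially for free from Proposition \ref{uv_from_et}, the only real content being the way the group composition $\circ$ of $\gp_{\dend}$ interacts with the adjoined unit $1$. First I would recall that Proposition \ref{uv_from_et} gives $\sU_t = \sR \circ \sE_t$ and $\sV_t = \sL \circ \sE_t$, so that the definition \eqref{defi_dt} of $\sD_t$ may be rewritten as $\sD_t = (1 + \sR \circ \sE_t) \vee (1 + \sL \circ \sE_t)$. It then remains only to recognise $1 + \sR \circ \sE_t$ as $(1+\sR) \circ \sE_t$ and $1 + \sL \circ \sE_t$ as $(1+\sL) \circ \sE_t$.

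For this identification I would invoke two elementary properties of the composition $\circ$ in $\gp_{\dend}$, both immediate from the definition of the group law recalled in appendix \ref{appA}. The first is that the map $f \mapsto f \circ \sE_t$ is additive in its outer argument $f$, since operadic substitution is performed degree by degree and is linear in the outer series. The second is that the adjoined product-unit satisfies $1 \circ \sE_t = 1$, because $1$ carries no generator slot into which $\sE_t$ could be substituted. Combining these yields $(1+\sR)\circ \sE_t = 1 \circ \sE_t + \sR \circ \sE_t = 1 + \sU_t$, and likewise $(1+\sL)\circ \sE_t = 1 + \sV_t$.

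Substituting these two identities back into \eqref{defi_dt} then gives $((1+\sR)\circ \sE_t) \vee ((1+\sL)\circ \sE_t) = (1+\sU_t)\vee(1+\sV_t) = \sD_t$, which is exactly the claim. I expect no serious obstacle: the substantive work is all in Proposition \ref{uv_from_et}, and the present step is pure bookkeeping. The one point to watch is to keep the adjoined product-unit $1$ distinct from the composition-unit $\dun$ of the group, so that the identity $1 \circ \sE_t = 1$ is applied to the correct unit; once that distinction is respected, the computation is forced and the corollary follows.
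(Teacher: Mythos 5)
Your proposal is correct and follows exactly the paper's route: the paper's proof is the one-line "This follows from \eqref{defi_dt} and Proposition \ref{uv_from_et}," and your additional bookkeeping about linearity of $\circ$ in the outer argument and $1 \circ \sE_t = 1$ merely makes explicit what the paper leaves implicit.
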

\begin{proof}
  This follows from \eqref{defi_dt} and Proposition \ref{uv_from_et}.
\end{proof}

This is readily reformulated by lemma \ref{diamant_vee} using the
$\diamond$ operation as
\begin{equation}
  \label{d_corolle_e_dend}
  \sD_t= ((1+\sR) \vee (1+\sL)) \diamond (\arb{1},\sE_t).
\end{equation}

\begin{theorem}
  The dendriform images of $\xE_t$ and $\xD_t$ are $\sE_t$ and $\sD_t$.
\end{theorem}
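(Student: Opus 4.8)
The plan is to show that both pairs $(\phi(\xE_t),\phi(\xD_t))$ and $(\sE_t,\sD_t)$ satisfy one and the same coupled system of functional equations in $\gp_{\dend}$, and then to argue that this system has a unique solution. Writing $\phi$ for the dendriform image, the first step is to transport the two Pre-Lie identities \eqref{eq_d_corolle_e} and \eqref{master_eq_E} across $\phi$. Since $\phi$ is a morphism of rooted-operads (Lemma \ref{racine_map}), the induced map on the associated groups is compatible with the operations $\circ$ and $\diamond$; applying $\phi$ to \eqref{eq_d_corolle_e}, using $\phi(\arb{0})=\arb{1}$ together with Proposition \ref{image_des_corolles}, gives
\begin{equation*}
  \phi(\xD_t) = ((1+\sR)\vee(1+\sL)) \diamond (\arb{1}, \phi(\xE_t)).
\end{equation*}
Since $\phi$ is moreover linear over the coefficient ring $\QQ(t)[b]$, applying it to \eqref{master_eq_E} yields
\begin{equation*}
  \phi(\xE_t) = \frac{1}{1-t}\phi(\xD_t) + \frac{b}{t}\left(\phi(\xD_t) - \phi(\xD)\right).
\end{equation*}

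These are exactly the two relations \eqref{d_corolle_e_dend} and \eqref{defi_et} that define $\sD_t$ and $\sE_t$, bearing in mind that $\phi(\xD)$ is the value of $\phi(\xD_t)$ at $t=0$, just as $\sD$ is the value of $\sD_t$. So it remains to check that the system
\begin{equation*}
  D = ((1+\sR)\vee(1+\sL)) \diamond (\arb{1}, E), \qquad E = \frac{1}{1-t} D + \frac{b}{t}(D - D_0),
\end{equation*}
where $D_0$ denotes the value at $t=0$, admits at most one solution in $\gp_{\dend}$.

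I would prove uniqueness by induction on the degree (number of inner vertices). The key structural fact is that the operation $X \diamond(\arb{1}, -)$ with $X=(1+\sR)\vee(1+\sL)$ lowers degree in its second argument: the arity-$n$ component of $X$ supplies a root together with further slots, each of which must receive an argument of degree at least $1$, so the degree-$N$ part of $D$ depends only on the parts of $E$ of degree strictly less than $N$. In degree $1$ only the lowest term $1\vee 1 = \arb{1}$ of $X$ contributes, fixing $D$ in degree $1$ with no reference to $E$. The second equation then expresses the degree-$N$ part of $E$ directly and linearly in terms of the degree-$N$ part of $D$, the operation $\tfrac{1}{1-t}D+\tfrac{b}{t}(D-D_0)$ being well defined on these rational-in-$t$ series. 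Thus, knowing $E$ and $D$ through degree $N-1$, the first equation determines $D$ in degree $N$ and the second then determines $E$ in degree $N$, completing the induction; hence $\phi(\xE_t)=\sE_t$ and $\phi(\xD_t)=\sD_t$.

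The main obstacle I anticipate is the careful justification of the degree-lowering property of $\diamond(\arb{1}, -)$ and of the functoriality of $\diamond$ under $\phi$; both rest on the rooted-operad formalism of the appendix, and one must make sure the bookkeeping variables $b$ and $t$, which record flow data rather than operadic structure, are carried along correctly so that $\phi$ is genuinely $\QQ(t)[b]$-linear and commutes with the scalar coefficients $\tfrac{1}{1-t}$ and $\tfrac{b}{t}$ in the equations. Once uniqueness is secured, the two pairs coincide and the theorem follows.
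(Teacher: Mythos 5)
Your proof is correct and follows essentially the same route as the paper: the paper likewise transports the characterizing equations \eqref{eq_d_corolle_e} and \eqref{master_eq_E} across $\phi$ via Proposition \ref{image_des_corolles} and the functoriality results of the appendix, and concludes by uniqueness of the solution of the resulting system. Your explicit degree-by-degree induction for that uniqueness is a correct elaboration of what the paper leaves implicit in the word ``characterized''.
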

\begin{proof}
  The series $\xE_t$ and $\xD_t$ are characterized by the equations
  \eqref{master_eq_E} and \eqref{eq_d_corolle_e}. By Proposition
  \ref{image_des_corolles} and results of appendix \ref{appA}, the
  dendriform image of \eqref{eq_d_corolle_e} is exactly
  \eqref{d_corolle_e_dend}. The dendriform image of
  \eqref{master_eq_E} is exactly \eqref{defi_et}. Therefore $\sE_t$
  and $\sD_t$ satisfy equations that characterize the dendriform
  images of $\xE_t$ and $\xD_t$, and the statement follows.
\end{proof}

\subsection{Explicit product formulas for coefficients}

The equations \eqref{def_u} and \eqref{def_v} provide an explicit
description of the coefficients of the series $\sU_t$ and $\sV_t$.

More precisely, the coefficient of a planar binary tree $\tau$ in the
series $\sU_t$ can be found as follows. One considers the
right-completed canopy of $\tau$ (including the rightmost leaf but not
the leftmost leaf). It admits a unique coarsest decomposition into
blocks of the shape $\pp^k$ and $\mm^\ell$ for $k,\ell \geq 1$. Every
block of length $\ell$ in this decomposition contributes a Narayana
factor $\ca_\ell$, but the rightmost block contributes instead a
$t$-Narayana factor $\ca_{\ell,t}$. There is an additional factor of
$b$ to the power the number of $\mm$ blocks and $(-1)$ to the power
the number of $\mm$.

For example, the coefficient of the leftmost planar binary tree of
figure \ref{fig:expl_bt}, whose right-completed canopy is
$\mm\pp\pp\mm\pp\pp$, is
\begin{equation*}
  b^2 \ca_1 \ca_2 \ca_1 \ca_{2,t}.
\end{equation*}

There is a similar description for $\sV_t$. One considers the
left-completed canopy of $\tau$ (including the leftmost leaf but not
the rightmost leaf) and decompose it into maximal blocks of $\pp$ and
$\mm$. Every such block of length $\ell$ contributes a Narayana factor
$\ca_\ell$, but the leftmost block contributes instead a $t$-Narayana
factor $\ca_{\ell,t}$. There is an additional factor of $b$ to the
power the number of $\pp$ blocks and $(-1)$ to the power the number of
$\mm$.

\medskip

One can also interpret the definition \eqref{defi_dt} as giving the
explicit coefficients of the series $\sD_t$.

More precisely, the coefficient of a planar binary tree $\tau$ in the
series $\sD_t$ can be found as follows. Consider the canopy of $\tau$,
and cut it into two parts according to the position of the root of
tree. Decompose both parts into maximal blocks of $\pp$ and $\mm$. Every such
block of length $\ell$ contributes a Narayana factor $\ca_\ell$, but
the two blocks that are closest to the root contributes instead a
$t$-Narayana factor $\ca_{\ell,t}$. There is an additional factor of
$b$ to the power the number of $\mm$ blocks in the left part plus the
number of $\pp$ blocks in the right part, and $(-1)$ to the power the
number of $\mm$.

For example, the coefficient of the leftmost planar binary tree of
figure \ref{fig:expl_bt}, whose canopy is cut into
$\mm\pp\pp$ and $\mm\pp$, is
\begin{equation*}
  b^2 \ca_1 \ca_{2,t} \ca_{1,t} \ca_{1}.
\end{equation*}

Letting $t=0$ in this description, one observes that the coefficient
of a tree in $\sD$ depends only on its canopy. This is obvious for the
factors associated with blocks and for the sign. As for the power of
$b$, it can be described as the number of $\mm$ blocks in the canopy,
excluding the last (rightmost) block.

It follows that $\sD$ is in the descent algebra. Moreover, this
description of $\sD_n$ is exactly the value at $a=1$ of the
description given in \cite[Th. 10.1]{menoth} and we therefore recover
this theorem. Let us give its statement here.

\begin{corollary}
  The homogeneous components $\sD_n$ are Lie idempotents and satisfy
  \begin{equation}
    \sD_n \cdot \sD_n = n \ca_{n-1} \sD_n,
  \end{equation}
  in the symmetric group ring of $\sym_n$.
\end{corollary}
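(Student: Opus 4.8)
The plan is to locate $\sD_n$ inside the linear span of the Lie idempotents, then to extract both the idempotency and the squaring relation from a general square-zero phenomenon, and finally to fix the scalar from a single coefficient.

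First I would note that $\sD_n$ is at once a Lie element and an element of the descent algebra. By the theorem identifying dendriform images one has $\sD = \phi(\xD)$, and since $\phi$ maps $\prelie$ into $\dend \cap \al$, every homogeneous component $\sD_n$ lies in $\al$. On the other hand the explicit canopy description obtained above shows that the coefficient of a planar binary tree in $\sD$ depends only on its canopy, so $\sD_n \in \ncsf$. Hence $\sD_n \in \ncsf \cap \al$, which by the results recalled in the introduction (\cite{ncsf}) is the space of primitive elements of $\ncsf$, namely the intersection of the descent algebra of $\sym_n$ with the vector space spanned by the Lie idempotents. Thus $\sD_n = \sum_i c_i e_i$ for some Lie idempotents $e_i$.

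Next I would exploit an elementary property of the internal product: left multiplication by any Lie idempotent is the projector onto Lie elements, so $e \cdot x = x$ for every Lie idempotent $e$ and every Lie element $x$ of degree $n$. Fixing one Lie idempotent $e$ and writing $\sD_n = \lambda e + N$ with $\lambda = \sum_i c_i$ and $N = \sum_i c_i(e_i - e)$ a combination of differences of Lie idempotents, both $e$ and $N$ are Lie elements; therefore $e \cdot N = N$, while $N \cdot x = \sum_i c_i(e_i\cdot x - e\cdot x) = 0$ for every Lie element $x$. In particular $e \cdot e = e$, $N \cdot e = 0$ and $N \cdot N = 0$, so that $\sD_n \cdot \sD_n = \lambda^2 e + \lambda N = \lambda\,\sD_n$. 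Consequently $\sD_n$ is a Lie quasi-idempotent: if $\lambda \neq 0$, then $\sD_n/\lambda = e + N/\lambda$ is idempotent and still projects onto the Lie elements, hence is a genuine Lie idempotent, and $\sD_n$ is proportional to it.

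It then remains to identify $\lambda$, for which I would use Lemma \ref{coeff_id}. Every Lie idempotent has coefficient $1/n$ on the ribbon Schur function of index $\pp^{n-1}$ (which maps to the identity permutation $\Id$), so each difference $e_i - e$, and hence all of $N$, has coefficient $0$ there. Therefore the coefficient of $\pp^{n-1}$ in $\sD_n$ equals $\lambda/n$. Reading this same coefficient off the explicit description of $\sD$ — the canopy $\pp^{n-1}$ forms a single block with no $\mm$, contributing the Narayana factor $\ca_{n-1}$ and no power of $b$ — gives the value $\ca_{n-1}$. Since $\ca_{n-1} \neq 0$ we obtain $\lambda = n\,\ca_{n-1} \neq 0$, which confirms that $\sD_n/(n\,\ca_{n-1})$ is a Lie idempotent and yields $\sD_n \cdot \sD_n = n\,\ca_{n-1}\,\sD_n$. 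The only external ingredient is the identification of $\ncsf \cap \al$ with the span of Lie idempotents; granting it, the square-zero computation does all the work, and I expect the main care to be needed not in the algebra but in the bookkeeping of step one — confirming that the canopy-only dependence really places $\sD_n$ in $\ncsf$ and that the identity coefficient is indeed $\ca_{n-1}$.
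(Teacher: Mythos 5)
Your proposal is correct and follows essentially the route the paper itself takes: establish $\sD_n \in \ncsf$ from the canopy-only description and $\sD_n \in \al$ from $\sD = \phi(\xD)$, invoke the identification of $\ncsf \cap \al$ with the span of Lie idempotents in the descent algebra, and fix the scalar via Lemma \ref{coeff_id} and the coefficient $\ca_{n-1}$ of the ribbon $\pp^{n-1}$. The only difference is that the paper outsources the quasi-idempotency itself to the matching with \cite[Th.~10.1]{menoth}, whereas you derive it directly by the standard square-zero computation on differences of Lie idempotents — a self-contained filling-in of what the paper leaves implicit.
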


To determine the precise constant of proportionality, one uses Lemma
\ref{coeff_id} and the fact that the coefficient of the ribbon Schur
function $\pp^{n-1}$ is $\ca_{n-1}$.

\medskip

One can now use \eqref{defi_et} to give an explicit description of the
coefficients of the series $\sE_t$.

As $\sD_t$ and $\sD$ have all but two of their factors in common, all
these factors are also in $\sE_t$. The remaining factor is
\begin{equation}
  \frac{1}{1-t}\ca_{k,t} \ca_{\ell,t}+\frac{b}{t}\left(\ca_{k,t} 
    \ca_{\ell,t}-\ca_k \ca_{\ell} \right)
\end{equation}
which is the fraction counting flows on the rooted trees
$B_+(\linear_k,\linear_{\ell})$.

Therefore, the coefficient of a planar binary tree $\tau$ in the
series $\sE_t$ can be found as follows. Consider the canopy of $\tau$,
and cut it into two parts according to the position of the root of
tree. Decompose both parts into maximal blocks of $\pp$ and $\mm$,
excluding the two central blocks. Every such block of length $\ell$
contributes a Narayana factor $\ca_\ell$. The two central blocks
together are of the shape $\pp^k \mm^\ell$. We associate with this the
coefficient of the rooted tree $B_+(\linear_k,\linear_{\ell})$ in the
series $\xE_t$.

There is an additional factor of $b$ to the power the number of $\mm$
blocks in the left part plus the number of $\pp$ blocks in the right
part, and $(-1)$ to the power the number of $\mm$.

\subsection{Connected flows in the dendriform group}

Let us now introduce the dendriform image $\sE^c_t$ of the series
$\xE^c_t$ of connected flows. As $\xE^c_t$ is related to $\xE_t$ by
equation \eqref{rela_ect_e}, one gets, by using Proposition
\ref{image_de_tout} and results of appendix \ref{appA}, that $\sE^c_t$
is defined by
\begin{equation}
  \label{defi_ect_dend}
  \sE_t = \left( (1-\Sigma \sL) \vee (1-\Sigma \sR)\right) \diam ( \sE^c_t , \sE^c ).
\end{equation}
Letting $t=0$, one gets
\begin{equation}
  \label{defi_ec}
    \sE = \left( (1-\Sigma \sL) \vee (1-\Sigma \sR) \right) \circ \sE^c.
\end{equation}

\begin{proposition}
  \label{formule_Ect}
  The series $\sE^c_t$ admits the following expression
  \begin{multline}
    \label{f_ect}
    \frac{1}{1-t}\dun  + \left(\frac{t}{1-t}+b\right) 1 \vee \dN_t\dT\dP - \left(\frac{t}{1-t}+b\right) \dN\dT\dP_t \vee 1 \\ - t \left(\frac{t}{1-t}+b\right) \dN\dT\dP_t \vee \dN_t\dT\dP.
  \end{multline}
\end{proposition}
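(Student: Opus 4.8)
The plan is to solve the defining equation \eqref{defi_ect_dend} for $\sE^c_t$ in closed form, and only then to substitute the known expression for $\sE_t$. First I would apply Lemma \ref{diamant_vee} to the right-hand side of \eqref{defi_ect_dend}, which turns the $\diam$ operation into a $\vee$-product:
\begin{equation*}
  \sE_t = \big((1-\Sigma\sL)\circ\sE^c\big)\vee_{\sE^c_t}\big((1-\Sigma\sR)\circ\sE^c\big).
\end{equation*}
The two composed factors then simplify. From \eqref{defi_ec} and Lemma \ref{inverse_cool} one has $\sE^c=\big((1+\sL)\vee(1+\sR)\big)\circ\sE$; combining this with Lemma \ref{nice_compo} (and its bar-involution companion $\Sigma\sL\circ\big((1+\sL)\vee(1+\sR)\big)=-\sR$) and with Proposition \ref{uv_from_et} gives $\Sigma\sR\circ\sE^c=-\sV$ and $\Sigma\sL\circ\sE^c=-\sU$. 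Hence $(1-\Sigma\sL)\circ\sE^c=1+\sU$ and $(1-\Sigma\sR)\circ\sE^c=1+\sV$, so that the defining equation reduces to the compact form $\sE_t=(1+\sU)\vee_{\sE^c_t}(1+\sV)$.

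Next I would invert this relation to isolate $\sE^c_t$. Lemma \ref{lemmeVU}, evaluated at equal parameters, yields $(1+\sV)*(1+\sU)=1$, so $1+\sV$ and $1+\sU$ are mutually inverse for the associative product $*$. Applying Lemma \ref{double_inversion} with $y=1+\sU$, $z=\sE^c_t$, $t=1+\sV$ and the choice $x=1+\sV$, $u=1+\sU$ collapses both outer factors and extracts the middle one:
\begin{equation*}
  (1+\sV)\vee_{\sE_t}(1+\sU)=1\vee_{\sE^c_t}1=\sE^c_t.
\end{equation*}
This is the key structural identity $\sE^c_t=(1+\sV)\succ\sE_t\prec(1+\sU)$.

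It then remains to expand the right-hand side. Using \eqref{defi_et} in the form $\sE_t=\tfrac{1}{1-t}\sD_t+\tfrac{b}{t}(\sD_t-\sD)$ together with $\sD_t=(1+\sU_t)\vee(1+\sV_t)$, I would push the outer factors inside the $\vee$ by the rules \eqref{vee_star}, and then collapse the resulting central products with Lemma \ref{lemmeVU}: one gets $(1+\sV)*(1+\sU_t)=1-t\,\dN\dT\dP_t$ and $(1+\sV_t)*(1+\sU)=1+t\,\dN_t\dT\dP$, whence $(1+\sV)\succ\sD_t\prec(1+\sU)=(1-t\,\dN\dT\dP_t)\vee(1+t\,\dN_t\dT\dP)$, while the $t=0$ term collapses to $(1+\sV)\succ\sD\prec(1+\sU)=\dun$. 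Expanding the $\vee$-product by bilinearity and collecting the coefficients of $\dun$, $1\vee\dN_t\dT\dP$, $\dN\dT\dP_t\vee 1$ and $\dN\dT\dP_t\vee\dN_t\dT\dP$ (using $\tfrac{t^2}{1-t}+bt=t(\tfrac{t}{1-t}+b)$) reproduces \eqref{f_ect} term by term.

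The step I expect to be the genuine obstacle is the simplification in the first paragraph. Handling $(1-\Sigma\sL)\circ\sE^c$ requires care with the two distinct units — the unit $1$ of the product $*$ and the unit $\dun$ of operadic composition $\circ$ — and with the fact that $\circ$ acts only on the positive-degree part while fixing the degree-zero term $1$. Establishing the bar-companion of Lemma \ref{nice_compo} and verifying that these unit conventions are respected throughout the applications of Lemmas \ref{diamant_vee} and \ref{double_inversion} is where the real work lies; once the identity $\sE^c_t=(1+\sV)\succ\sE_t\prec(1+\sU)$ is in hand, the final expansion is purely routine.
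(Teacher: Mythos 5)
Your proposal is correct and follows essentially the same route as the paper's own proof: both reduce the defining relation to the key identity $\sE^c_t=(1+\sV)\vee_{\sE_t}(1+\sU)$ via Lemmas \ref{diamant_vee}, \ref{double_inversion}, \ref{nice_compo} and Proposition \ref{uv_from_et}, and then expand with \eqref{defi_et}, \eqref{defi_dt} and Lemma \ref{lemmeVU}. The only (immaterial) difference is that you identify the composed factors as $1+\sU$ and $1+\sV$ before performing the $\vee$-inversion, whereas the paper inverts first and identifies afterwards, citing Lemma \ref{L_inverse_R} rather than the equal-parameter case of Lemma \ref{lemmeVU} for the needed $*$-inverse.
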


\begin{proof}
  Using lemma \ref{inverse_cool}, one can invert the relation \eqref{defi_ec} between $\sE$ and $\sE^c$ as
  \begin{equation}
    \label{ec_from_e}
    ((1+\sL)\vee (1+\sR)) \circ \sE = \sE^c.
  \end{equation}

  On the other hand, using lemma \ref{diamant_vee}, lemma
  \ref{double_inversion} and lemma \ref{L_inverse_R}, one can invert
  the relation \eqref{defi_ect_dend} between $\sE_t$ and $\sE^c_t$ to
  obtain
  \begin{equation*}
    \sE_t^c = (1-\Sigma \sR \circ \sE^c) \vee_{\sE_t} (1-\Sigma \sL\circ \sE^c).
  \end{equation*}
  But by \eqref{ec_from_e} and Lemma \ref{nice_compo}, one deduces that
  \begin{equation*}
    \Sigma\sR\circ \sE^c = \Sigma\sR \circ ((1+\sL)\vee (1+\sR)) \circ \sE = -\sL \circ \sE,
  \end{equation*}
  and by symmetry that
  \begin{equation*}
    \Sigma\sL\circ \sE^c = \sL \circ ((1+\sL)\vee (1+\sR)) \circ \sE = -\sR \circ \sE.
  \end{equation*}

  One therefore gets using prop. \ref{uv_from_et} that
  \begin{equation*}
    \sE_t^c = (1 + \sL \circ \sE) \vee_{\sE_t} (1+ \sR\circ \sE)= (1+\sV) \vee_{\sE_t} (1+\sU).
  \end{equation*}
  By the equation \eqref{defi_et}, one finds
  \begin{equation*}
    \frac{1}{1-t}(1+\sV) \vee_{\sD_t} (1+\sU) +\frac{b}{t} ((1+\sV) \vee_{\sD_t} (1+\sU) - (1+\sV) \vee_{\sD} (1+\sU) ).
  \end{equation*}
  Using then the definition \eqref{defi_dt} of $\sD_t$, one gets
  \begin{multline*}
     \left(\frac{1}{1-t}+\frac{b}{t}\right)(1+\sV)*(1+\sU_t) \vee (1+\sV_t)*(1+\sU)-\frac{b}{t}(1+\sV)*(1+\sU) \vee (1+\sV)*(1+\sU) .
  \end{multline*}
  By Lemma \ref{lemmeVU}, one gets
  \begin{equation*}
    \left(\frac{1}{1-t} +\frac{b}{t}\right) (1-t \dN\dT\dP_t ) \vee (1+t \dN_t\dT\dP )-\frac{b}{t} 1 \vee 1.
  \end{equation*}
  This gives the expected result, after simplification.
\end{proof}

Recall from \eqref{from_EC_to_F} that one can write
\begin{equation}
  \sE^c = \dun + b \sF/\dun - b \dun \backslash \sF,
\end{equation}
where $\sF$ is the dendriform image of the series $\xF$ introduced in
\eqref{from_EC_to_F}.

\begin{corollary}
  The series $\sE^c$ admits the following expression
  \begin{equation}
    \dun + b 1 \vee \dN\dT\dP - b \dN\dT\dP \vee 1.
  \end{equation}
  The series $\sF$ is given by
  \begin{equation}
    \label{formule_f_dend}
    - \dN\dT\dP,
  \end{equation}
  and belongs to the descent algebra.
\end{corollary}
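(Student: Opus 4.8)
The plan is to obtain both formulas by specialising Proposition~\ref{formule_Ect} at $t=0$, and then to recognise the shape of $\dN\dT\dP$ to settle the descent-algebra claim. First I would evaluate the expression \eqref{f_ect} for $\sE^c_t$ at $t=0$, recalling that $\sE^c$ is by convention the value of $\sE^c_t$ there. The prefactor $\tfrac{1}{1-t}$ of the first term becomes $1$, giving $\dun$; the coefficient $\tfrac{t}{1-t}+b$ of the second and third terms becomes $b$, while $\dN_t\dT\dP\to\dN\dT\dP$ and $\dN\dT\dP_t\to\dN\dT\dP$; and the fourth term carries an explicit factor $t$ and therefore vanishes. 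This yields
\begin{equation*}
  \sE^c = \dun + b\,1\vee\dN\dT\dP - b\,\dN\dT\dP\vee 1,
\end{equation*}
which is the first assertion.

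Next I would identify $\sF$. Applying $\phi$ to the pre-Lie identity \eqref{from_EC_to_F}, namely $\xE^c=\arb{0}+b\,\arb{0}\pl\xF$, and using $\phi(x\pl y)=y\succ x-x\prec y$ from \eqref{defi_phi} together with $\phi(\arb{0})=\dun$, gives $\sE^c=\dun+b(\sF\succ\dun-\dun\prec\sF)$. Reading the $\vee$-against-the-unit operations as $1\vee z=\dun\prec z$ and $z\vee 1=z\succ\dun$ (gluing on a common vertex with the unit), a term-by-term comparison with the formula just obtained forces $\sF=-\dN\dT\dP$; this comparison is legitimate because the grafting maps $\dun\prec(-)$ and $(-)\succ\dun$ are injective with distinguishable images, and because \eqref{from_EC_to_F} determines $\xF$, hence $\sF$, uniquely. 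I expect this bookkeeping — correctly converting the $\vee$-with-unit operations into $\prec$ and $\succ$ and matching the two decompositions of $\sE^c$ — to be the only genuinely delicate point.

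Finally, for the descent-algebra claim, I would observe that every monomial occurring in $\dN\dT\dP$ begins with $\mm$ (forced by the factor $\dN=\sum_{k\geq 1}(-1)^k\ca_k\mm^k$) and ends with $\pp$ (forced by $\dP=\sum_{k\geq 1}\ca_k\pp^k$), the middle factor $\dT=\emptyset+\dots$ leaving the endpoints untouched. Thus each such monomial $\mm w\pp$ is exactly a full canopy, and under the interpretation used in Lemma~\ref{lemmeVU} the series $\dN\dT\dP$ is the sum over planar binary trees carrying these full canopies. Since the trees with full canopy $\mm w\pp$ are precisely those with canopy $w$, this coincides with the image of $\sum_w(\text{coefficient})\,w$ under the canopy inclusion $\ncsf\hookrightarrow\dend$. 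Hence $\dN\dT\dP\in\ncsf$, and therefore $\sF=-\dN\dT\dP$ lies in the descent algebra.
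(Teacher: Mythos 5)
Your proposal is correct and follows essentially the paper's (unwritten) route: specialise Proposition~\ref{formule_Ect} at $t=0$, match the result against the dendriform image of \eqref{from_EC_to_F} (which the paper records just before the corollary as $\sE^c=\dun+b\,\sF/\dun-b\,\dun\backslash\sF$), and observe that every monomial of $\dN\dT\dP$ is a full canopy, hence a ribbon Schur function under the inclusion $\ncsf\hookrightarrow\dend$. Your explicit justification of the matching step via $\phi(x\pl y)=y\succ x-x\prec y$ and the disjointness of the images of $1\vee(-)$ and $(-)\vee 1$ is a welcome elaboration of what the paper leaves implicit.
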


One can use \eqref{formule_f_dend} to give an explicit
description of the coefficients of $\sF$.

More precisely, let $\tau$ be a planar binary tree. Then consider the
full canopy of $\tau$, and its coarsest decomposition into blocks of
the shape $\mm^k$ or $\pp^\ell$ for $k,\ell \geq 1$. To each such
block of size $\ell$, one associate a factor $\ca_{\ell}$. 

Then the coefficient of $\tau$ is the product of these factors, times
a power of $b$ given by the number of $\mm$ blocks minus $1$ and times
$(-1)$ to the power the number of $\mm$ minus $1$.

For example, the coefficient of the leftmost planar binary tree of
figure \ref{fig:expl_bt}, whose full canopy is
$\mm\mm\pp\pp\mm\pp\pp$, is
\begin{equation*}
  b \ca_2 \ca_2 \ca_1 \ca_2.
\end{equation*}

Using Prop. \ref{formule_Ect}, one can also give a description of the
coefficients of the series $\sE^c_t$ of connected flows. 

Let us consider a planar binary tree $\tau$ with at least $2$ inner
vertices. One considers the full canopy of $\tau$, and cut it into two
parts by using the position of the root. One distinguish three cases:
the root can either be placed between $\mm$ and $\mm$ at the left of
of the full canopy, or between $\pp$ and $\pp$ at the right of the
full canopy, or between $\pp$ and $\mm$ inside the full canopy. In
each case, one can translate the corresponding term in \eqref{f_ect}
into a description of the factors of the coefficient of $\tau$ in
$\sE^c_t$.

\medskip

The series $\sF$ provides new Lie idempotents.

\begin{proposition}
  \label{lie_id_F}
  The homogeneous component $\sF_n$ of the series $\sF$ satisfies
  \begin{equation}
    \sF_n \cdot \sF_n = n \ca_n \sF_n,
  \end{equation}
  in the symmetric group ring of $\sym_n$.
\end{proposition}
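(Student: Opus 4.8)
The plan is to mirror the argument already used for the corollary on $\sD_n$: first show that $\sF_n$ is a nonzero scalar multiple of a genuine Lie idempotent, and then pin down that scalar by reading off a single coefficient. Once we know $\sF_n = \lambda\,\theta$ with $\theta^2 = \theta$ a Lie idempotent, the desired identity is automatic, since $\sF_n \cdot \sF_n = \lambda^2 \theta^2 = \lambda^2 \theta = \lambda\,\sF_n$; the whole problem therefore reduces to identifying $\lambda$.

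To obtain the multiple-of-an-idempotent statement, I would first record that $\sF_n$ lies in $\ncsf_n \cap \al$. Indeed, $\sF$ is the dendriform image $\phi(\xF)$ of a Pre-Lie series, so it lies in $\al$ (the image of $\phi$ is contained in $\dend \cap \al$), and by the preceding corollary $\sF = -\dN\dT\dP$ belongs to the descent algebra $\ncsf$; both properties are graded, so $\sF_n \in \ncsf_n \cap \al$. By the results of \cite{ncsf} recalled in the introduction, this intersection is exactly the span of the Lie idempotents of degree $n$. Now the Lie idempotents form an affine subspace $A_n = \theta_0 + W$, and by Lemma \ref{coeff_id} every element of $A_n$ has coefficient $1/n$ on the ribbon Schur function $\pp^{n-1}$, so $W$ consists of elements with vanishing $\pp^{n-1}$-coefficient. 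Hence, writing $\sF_n = \mu\,\theta_0 + w$ in the span $\QQ\theta_0 \oplus W$, the coefficient $c$ of $\pp^{n-1}$ in $\sF_n$ equals $\mu/n$; if $c \neq 0$ then $\mu = nc \neq 0$ and $\sF_n = nc\,(\theta_0 + w/(nc))$ exhibits $\sF_n$ as $nc$ times the Lie idempotent $\theta_0 + w/(nc) \in A_n$.

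It then remains to compute $c$, the coefficient of the ribbon Schur function $\pp^{n-1}$ in $\sF_n$, which by the explicit block description of $\sF$ attached to \eqref{formule_f_dend} equals the coefficient of any planar binary tree with canopy $\pp^{n-1}$. Such a tree has full canopy $\mm\,\pp^{n}$, whose coarsest block decomposition is a single $\mm$-block of size $1$ and a single $\pp$-block of size $n$. The associated factors are $\ca_1 = 1$ and $\ca_n$, the power of $b$ is $1-1 = 0$ and the sign is $(-1)^{1-1} = 1$, so $c = \ca_n$. With $\lambda = nc = n\ca_n$, the reduction of the first paragraph yields $\sF_n \cdot \sF_n = n\ca_n\,\sF_n$.

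The only genuinely delicate point is the upgrade from \emph{lies in the span of Lie idempotents} to \emph{is a scalar multiple of a single Lie idempotent}; everything else is the affine bookkeeping above together with the routine coefficient extraction. In particular the argument uses $\ca_n \neq 0$, which holds because $\ca_n$ is a Narayana polynomial with positive coefficients, and it is carried out over the base field $\QQ(b)$, where Lemma \ref{coeff_id} and the identification of $\ncsf_n \cap \al$ with the span of Lie idempotents both remain valid.
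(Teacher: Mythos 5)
Your proof is correct and follows essentially the same route the paper takes (the paper only sketches it in the remark following the proposition): $\sF_n$ lies in $\ncsf_n\cap\al$, hence is a scalar multiple of a Lie idempotent, and the scalar $n\ca_n$ is read off from the $\pp^{n-1}$ ribbon coefficient via Lemma \ref{coeff_id}. Your explicit affine-space bookkeeping and the verification that $\ca_n\neq 0$ are exactly the details the paper leaves implicit.
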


The constant $n\ca_n$ is determined by Lemma \ref{coeff_id}, using
that the coefficient of the ribbon Schur function $\pp^{n-1}$
(corresponding to the full canopy $\mm \pp^n$) is $\ca_n$.

%This statement holds up to $\sym_6$ included.

Let us consider now the series $\sF_t=- (1-t) \dN_t \dT \dP_t$, which gives
back $\sF$ when $t=0$. 

Assuming that Conjecture \ref{conjecture_F} holds, one can introduce a
global series $\xF_t$ and propose the following conjecture.

\begin{conjecture}
  The series $\sF_t$ is the dendriform image of the series $\xF_t$.
\end{conjecture}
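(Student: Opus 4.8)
The plan is to parallel the $t=0$ argument, where the identity $\phi(\xF)=\sF=-\dN\dT\dP$ of \eqref{formule_f_dend} was obtained by combining the structural relation \eqref{from_EC_to_F} for $\xE^c$ with the explicit dendriform expression for $\sE^c$ coming from Proposition \ref{formule_Ect}, and then to promote every ingredient to its $t$-dependent version. The guiding principle is that the morphism $\phi$ is functorial for the group construction, so any functional equation characterizing $\xF_t$ in $\gp_{\prelie}$ transports to a dendriform equation that the candidate $\sF_t=-(1-t)\dN_t\dT\dP_t$ must then be checked to satisfy.

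First, assuming Conjecture \ref{conjecture_F}, I would assemble the rational functions $\xF_{T,t}$ into a global series $\xF_t\in\gp_{\prelie}$ and seek a single global identity characterizing it. The uniqueness remark following Conjecture \ref{conjecture_F} shows that $\xF_t$ is already pinned down by the local pull-up recursion together with its values on linear trees and corollas, so it suffices to re-encode this data as an equation of operadic series. I would lift the pull-up recursion to the group using the rooted-operad machinery of the appendix (Propositions \ref{circ_prop} and \ref{diam_prop}), exactly as the local flow decompositions were turned into the global equations \eqref{master_eq_E} and \eqref{global_eq_connected}.

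Next I would apply $\phi$: by functoriality, $\phi(\xF_t)$ satisfies the dendriform image of the Pre-Lie equation from the previous step, with the known expression for $\sE^c_t$ in Proposition \ref{formule_Ect} serving as the anchor. The final step is to verify that $\sF_t=-(1-t)\dN_t\dT\dP_t$ solves this same dendriform equation. I expect this to reduce to a manipulation of non-commutative series in $\pp$ and $\mm$, using \eqref{vee_star}, Lemmas \ref{diamant_vee}, \ref{double_inversion} and \ref{lemmeVU}, together with the relations \eqref{prop_pc_nc} for $\dP^c$ and $\dN^c$, much as in the proof of Proposition \ref{formule_Ect}; uniqueness, transported through the injectivity of $\phi$, would then force $\phi(\xF_t)=\sF_t$.

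The \emph{hard part} is the first step. Unlike the root-status decompositions, which directly yield closed global equations, the pull-up operation \eqref{pullF} changes the shape of the tree and has no obvious counterpart as a group operation, so globalizing it is not routine. Moreover, the text notes that the fractions $\xF_{T,t}$ fail to have positive coefficients in $b$ and $t$, so there is no model of connected flows to exploit and the argument must be purely algebraic; the mismatch between $\sF_t$, which carries the parameter $t$ on both end factors, and the expression for $\sE^c_t$ in \eqref{f_ect}, which carries it on only one side, shows that $\sF_t$ cannot simply be read off from $\sE^c_t$ as in the $t=0$ case. Finally, the whole construction rests on Conjecture \ref{conjecture_F} itself, whose existence claim is open, and it is securing a form of that existence usable at the operadic-group level that I expect to be the genuine obstacle, which is presumably why the statement remains conjectural.
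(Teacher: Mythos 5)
The statement you are asked to prove is explicitly a \emph{conjecture} in the paper: no proof is given there, and the introduction states that the authors ``do not know if the dendriform series $\sF_t$ is a Lie element or not'' precisely because a combinatorial or algebraic construction of the Pre-Lie series $\xF_t$ is missing. So there is no proof in the paper to compare yours against, and your proposal, as you yourself acknowledge, is a strategy sketch rather than a proof.

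Your diagnosis of where the difficulty lies is accurate and matches the paper's own framing. The $t=0$ identity $\phi(\xF)=-\dN\dT\dP$ is not obtained by globalizing the pull-up recursion \eqref{pullF}; it falls out of the relation \eqref{from_EC_to_F}, $\xE^c=\arb{0}+b\,\arb{0}\pl\xF$, combined with the explicit formula for $\sE^c$ from Proposition \ref{formule_Ect}. That route is unavailable for general $t$: since the fractions $\xF_{T,t}$ of Conjecture \ref{conjecture_F} do not have positive coefficients, there is no flow-theoretic interpretation and hence no analogue of \eqref{from_EC_to_F} tying $\xF_t$ to $\xE^c_t$; and as you observe, the placement of $t$ in \eqref{f_ect} (on one side of each $\vee$) is structurally incompatible with $-(1-t)\dN_t\dT\dP_t$ (which carries $t$ on both ends). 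The remaining hope, globalizing the pull-up recursion itself, founders because pulling a leaf down an edge is a local tree surgery with no counterpart among the operations $\circ$ and $\diamond$ of the appendix — the global equations \eqref{master_eq_E} and \eqref{global_eq_connected} all come from root-status decompositions, which \eqref{pullF} is not. So the gap you name is genuine, it is the actual obstruction, and it is why the paper leaves the statement open; your proposal does not close it.
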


If this is true, then the series $\sF_t$ provides new Lie idempotents.

\begin{conjecture}
  \label{lie_id_FT}
  The homogeneous component $\sF_{n,t}$ of the series $\sF_t$ satisfies
  \begin{equation}
    \sF_{n,t} \cdot \sF_{n,t} =  n \ca_{n,t} \sF_{n,t},
  \end{equation}
  in the symmetric group ring of $\sym_n$.
\end{conjecture}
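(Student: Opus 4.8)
The plan is to follow the template of Proposition \ref{lie_id_F}, the $t=0$ analogue, deforming each ingredient in the variable $t$ and working over the base field $\QQ(t)$. Concretely, it suffices to prove two things: that each homogeneous component $\sF_{n,t}$ is a Lie element, i.e. lies in $\al$, and that it lies in the descent algebra $\ncsf$. Once both hold, $\sF_{n,t}$ belongs to $\ncsf\cap\al$ and, exactly as in the proof of Proposition \ref{lie_id_F}, is a scalar multiple of a Lie idempotent; the idempotent relation $\sF_{n,t}\cdot\sF_{n,t}=c_n\,\sF_{n,t}$ then holds automatically, and only the constant $c_n$ remains to be pinned down.

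The descent-algebra membership is the easy half and requires no new idea. By its very definition $\sF_t=-(1-t)\dN_t\dT\dP_t$ is a product of series in the two letters $\pp$ and $\mm$, with coefficients in $\QQ(t)$. Under the correspondence between monomials in $\pp,\mm$ and ribbon Schur functions, such a product lies in $\ncsf\otimes\QQ(t)$, so the coefficient of a planar binary tree in $\sF_t$ depends only on its canopy. This is the exact analogue, with $\ca_k$ replaced by $\ca_{k,t}$, of the corollary stating that $\sF=-\dN\dT\dP$ belongs to the descent algebra.

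The Lie-element property is the crux, and it is precisely here that the argument is currently blocked. In the case $t=0$ one knows $\sF\in\al$ because $\sF$ is the dendriform image of the Pre-Lie series $\xF$, and $\phi(\prelie)\subseteq\dend\cap\al$. To run the same reasoning for general $t$, one must produce a Pre-Lie series $\xF_t$ with $\phi(\xF_t)=\sF_t$; this is exactly the content of Conjecture \ref{conjecture_F} (existence of the fractions $\xF_{T,t}$, which assemble into $\xF_t$) together with the conjecture that $\sF_t$ is the dendriform image of $\xF_t$. The main obstacle is that, unlike $\xF_T$, the fractions $\xF_{T,t}$ lack a combinatorial interpretation: they fail to have nonnegative coefficients, so one cannot derive their defining functional equation by a bijective flow argument as in Theorem \ref{mainF}. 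The route I would attempt is purely algebraic: take the $t$-deformed recursion $\xF_{T\pl_v w,t}=\xF_{T\pl_u w,t}+(1-t)\xF_{S,t}\xF_{S',t}$ of Conjecture \ref{conjecture_F} as the \emph{definition} of $\xF_t$, establish that it determines a well-defined element of $\gp_{\prelie}$ (uniqueness is already noted after Conjecture \ref{conjecture_F}; existence is what must be shown), and then verify that its image under $\phi$ satisfies the dendriform identity characterising $\sF_t=-(1-t)\dN_t\dT\dP_t$. Matching the $\phi$-image of the deformed relation against a dendriform factorisation of $\sF_t$, in the spirit of Lemma \ref{lemmeVU} and the manipulations in Proposition \ref{formule_Ect}, would simultaneously prove existence of $\xF_t$ and the equality $\phi(\xF_t)=\sF_t$, hence $\sF_t\in\al$.

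Granting the Lie-element property, the constant is determined exactly as for $\sD_n$ and $\sF_n$. Since $\sF_{n,t}=c_n\,\theta$ for a Lie idempotent $\theta$, Lemma \ref{coeff_id} gives that the coefficient of the ribbon Schur function $\pp^{n-1}$ in $\theta$ is $1/n$, so $c_n$ is $n$ times the coefficient of $\pp^{n-1}$ in $\sF_{n,t}$. That coefficient corresponds to the full canopy $\mm\pp^n$; extracting it from $-(1-t)\dN_t\dT\dP_t$ forces the factor $\dT$ to reduce to its unit term $\emptyset$ and selects $-\ca_{1,t}$ from $\dN_t$ and $\ca_{n,t}$ from $\dP_t$, giving $(1-t)\ca_{1,t}\ca_{n,t}=\ca_{n,t}$ because $\ca_{1,t}=1/(1-t)$. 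Hence $c_n=n\ca_{n,t}$ and $\sF_{n,t}\cdot\sF_{n,t}=n\ca_{n,t}\sF_{n,t}$, as claimed. Thus the entire statement reduces to the single missing input that $\sF_t$ is a Lie element, equivalently the existence of the Pre-Lie preimage $\xF_t$.
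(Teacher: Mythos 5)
The statement you are addressing is presented in the paper as a conjecture (checked only up to $\sym_6$), and the paper offers no proof: it merely observes that the constant $n\,\ca_{n,t}$ is forced by Lemma \ref{coeff_id} via the coefficient of the full canopy $\mm\pp^n$ in $-(1-t)\dN_t\dT\dP_t$, and that the Lie-idempotent property would follow from the conjectural existence of a Pre-Lie preimage $\xF_t$ with $\phi(\xF_t)=\sF_t$. Your proposal reproduces exactly this conditional reasoning — descent-algebra membership from the canopy description, the constant computation, and the reduction of everything to the existence of $\xF_t$ — and honestly leaves open precisely the same gap the paper does, so it matches the paper's approach but, like the paper, does not constitute a proof.
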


The constant $n\ca_{n,t}$ in this conjecture is given by Lemma
\ref{coeff_id}, using that the coefficient of the ribbon Schur
function $\pp^{n-1}$ (corresponding to the full canopy $\mm \pp^n$) is
$\ca_{n,t}$.

Conjecture \ref{lie_id_FT} has been checked up to $\sym_6$ included.

\subsection{Description of $\sZ$}

Let $\sZ$ be the dendriform image of $\xZ$, introduced in
\eqref{Z_here}. We propose here a conjectural description of the
coefficients of $\sZ$.

For positive integers $ p$ and $ q$, let us define polynomials
\begin{equation*}
  z_{p,q}=\sum_{k\geq 0} \binom{p}{k}\binom{q}{k} b^{p+q+1-k}.
\end{equation*}
If $p=q$, this polynomial is essentially a Narayana polynomial of type
$B$.

Let now $\tau$ be a planar binary tree of size $n$. Consider the full
canopy of $\tau$ and decompose it into blocks of the shape
$\mm^{p}\pp^{q}$ with $p,q\geq 1$. To each such block
$\mm^{p}\pp^{q}$, one associates a factor $(-1)^{p-1} z_{p-1,q-1}$.

The coefficient of $\tau$ in the series $\xZ$ seems to be the product
of these factors associated with blocks, divided by $b$. The total
degree with respect to $b$ is $n$ minus the number of blocks.

If this description holds, the coefficient of $\tau$ would depend only
on its canopy. This would imply the following result.

\begin{conjecture}
  The homogeneous component $\sZ_n$ of the series $\sZ$ is in the
  descent algebra and satisfies
  \begin{equation}
    \sZ_n \cdot \sZ_n =  n b^{n-1} \sZ_n,
  \end{equation}
  in the symmetric group ring of $\sym_n$.
\end{conjecture}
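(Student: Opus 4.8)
The plan is to follow the route already successful for $\sD_n$ and $\sF_n$: exhibit each $\sZ_n$ as a Lie element of the descent algebra, then read off the multiplicative constant from a single ribbon coefficient. Two of the three ingredients are immediate. Since $\phi$ induces a morphism of the associated groups $\gp_{\prelie}\to\gp_{\dend}$ and $\phi(\prelie)\subseteq\dend\cap\al$, the series $\sZ=\phi(\xZ)$ lies in $\al$, so every $\sZ_n$ is automatically a Lie element. Moreover, under the inclusion $\ncsf\hookrightarrow\dend$ the subalgebra $\ncsf$ is exactly the set of elements of $\dend$ whose coefficient on a planar binary tree $\tau$ depends only on the canopy of $\tau$; hence $\sZ_n\in\ncsf$ as soon as the conjectural $z_{p,q}$-description is proved, since that description depends only on the (full) canopy. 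The entire difficulty is thus concentrated in establishing that coefficient description.

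To attack it I would push everything through $\phi$ at once. As $\phi$ is a group morphism and the identification of $\sE_t,\sD_t$ as the dendriform images of $\xE_t,\xD_t$ gives $\phi(\xE)=\sE$ and $\phi(\xD)=\sD$ at $t=0$, one gets $\phi(\xY)=\sE\circ\sD^{-1}$ in $\gp_{\dend}$, built from series that are all explicit via $\sU=\sR\circ\sE$, $\sV=\sL\circ\sE$ (Proposition \ref{uv_from_et}), the factorisation $\sD=(1+\sU)\vee(1+\sV)$, and the linear data $\dN,\dT,\dP$. Unfolding $\phi(\xY)=\dun+b\,(\sZ\vee1)-b\,(1\vee\sZ)$, exactly as $\sE^c$ was unfolded from the analogue of \eqref{Z_here}, reduces the task to computing $\sE\circ\sD^{-1}$ and collecting $\sZ$. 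This is the same style of manipulation as in Proposition \ref{formule_Ect} and Lemma \ref{lemmeVU}: invert $\sD$ using Lemmas \ref{diamant_vee}, \ref{double_inversion} and \ref{L_inverse_R}, compose with $\sE$, and gather the output as a product of factors along the canopy blocks $\mm^p\pp^q$. Matching these factors with $(-1)^{p-1}z_{p-1,q-1}$, where the type-$B$ Narayana nature of $z_{p,p}$ emerges, is then a finite identity among Narayana-type polynomials.

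A purely combinatorial alternative is to prove the equivalent Pre-Lie statement $\xY_T=(-1)^{L(T)-1}\xE_T^s$ directly. Rewriting $\xE=\xY\circ\xD$, one wants to show that inserting small flows (counted by $\xD$) into a signed skeleton supported on trees that carry a saturated flow, of weight the monomial $\xE_T^s$ and nonzero exactly on the support governed by Lemma \ref{same_support}, reproduces every flow, the signs $(-1)^{L(T)-1}$ running a sign-reversing involution that cancels all non-saturated contributions. Making this precise would use the operadic--species bookkeeping of the appendix, in the spirit of the proofs of \eqref{rela_ect_e} and \eqref{flow_is_tree_of_connected}.

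Once the coefficient description is secured one has $\sZ_n\in\ncsf\cap\al$, and the quadratic relation is then formal, as for $\sD_n$ and $\sF_n$: a Lie element $x$ in $\QQ[\sym_n]$ that lies in the descent algebra satisfies, for the internal product, $x\cdot x=n\,c\,x$, where $c$ is the coefficient of $\Id$, equivalently (through $\ncsf\hookrightarrow\fqsym$) the coefficient of the ribbon Schur function with index $\pp^{n-1}$. Here that ribbon corresponds to the full canopy $\mm\pp^n$, i.e. to the single block $\mm^1\pp^n$, whose factor is $z_{0,n-1}/b=b^{n-1}$; hence $c=b^{n-1}$ and the constant is $n\,b^{n-1}$, with Lemma \ref{coeff_id} fixing the normalisation $1/n$ of the underlying idempotent. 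The main obstacle is beyond doubt the coefficient description: it amounts to controlling the group inverse $\xD^{-1}$ (equivalently $\sD^{-1}$), which is precisely what resists proof here, and neither Theorem \ref{main} nor the global equations \eqref{master_eq_E}, \eqref{eq_d_corolle_e} seem to force the needed factorisation. A genuinely new input, a canonical saturation procedure refining Lemma \ref{same_support} into a weight- and sign-controlled involution, appears to be what is missing.
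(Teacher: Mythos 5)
The statement you were asked to prove is labelled a conjecture in the paper, and the paper offers no proof: its entire justification is the remark that the result \emph{would} follow from a conjectural description of the coefficients of $\sZ$ (the product of factors $(-1)^{p-1}z_{p-1,q-1}$ over the canopy blocks $\mm^p\pp^q$), which forces $\sZ_n$ into the descent algebra because it depends only on the canopy, together with Lemma \ref{coeff_id} to pin down the constant. Your proposal reproduces exactly this reduction, and your details are right: $\sZ=\phi(\xZ)$ is a Lie element since $\phi(\prelie)\subseteq\dend\cap\al$; the image of $\ncsf$ in $\dend$ is precisely the span of canopy-class sums; and the ribbon $\pp^{n-1}$ corresponds to the full canopy $\mm\pp^n$, a single block with factor $z_{0,n-1}/b=b^{n-1}$, giving the constant $n\,b^{n-1}$ via the standard fact that a Lie element of the descent algebra is a scalar multiple of a Lie idempotent.

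But this is not a proof, and you say so yourself. The essential step --- the canopy-block formula for the coefficients of $\sZ$, equivalently the formula $\xY_T=(-1)^{L(T)-1}\xE^s_T$ for the quotient $\xY=\xE\circ\xD^{-1}$ --- is left open both by you and by the paper. Note that even the existence of $\xZ$ (hence of $\sZ$) rests on the unproved claim that the support of $\xY$ consists of trees of root valency at most $1$, which is what licenses writing $\xY=\arb{0}+b\,\arb{0}\pl\xZ$ in \eqref{Z_here}. Your two suggested attacks (unwinding $\sE\circ\sD^{-1}$ in $\gp_{\dend}$ via Lemmas \ref{diamant_vee}, \ref{double_inversion}, \ref{L_inverse_R}, or a sign-reversing involution on flows refining Lemma \ref{same_support}) are plausible directions, but neither is carried out, so the proposal should be read as an accurate account of what remains to be done rather than as a proof. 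In particular it neither exceeds nor falls short of what the paper itself establishes.
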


Note that one uses Lemma \ref{coeff_id} to get this precise statement.

% This statement holds up to $\sym_6$ included.

% Here is a piece of evidence for conjecture \ref{lie_id_Z}. The
% sub-dominant coefficient in $\xZ_n$ seems to be given by
% \begin{equation}
%   \frac{1}{2}\sum_{i | 2i<n}(n-2i) [\Psi_{n-i},\Psi_{i}]
% \end{equation}
% where $\Psi_n$ is the linear tree with $n$ vertices. So this belongs
% to the free Lie algebra generated by the $\Psi_i$, hence is a Lie
% idempotent.

\begin{question}
  Are the zeroes of the polynomials $z_{p,q}$ real ?
\end{question}

It is known that the generalized Narayana numbers associated with
finite Coxeter groups have only real roots, see \cite[\S
5.2]{reiner_welker}.

\begin{remark}
  The polynomials $z_{p,p+1}$ and $z_{p,p}$, as well as the
  polynomials $\xF$ for forks seem to appear in the article
  \cite{lassalle}, which deals with symmetric functions. The
  relationship with the present work is not clear to us.
\end{remark}

\appendix
\section{Appendix}
\label{appA}

We present here a general setting for the combinatorial use of some
algebraic structures related to operads with specific properties. The
reader may like to keep in mind that the Pre-Lie and dendriform
operads are the motivating examples.

Recall that a species is a functor from the category of finite sets
and bijections to the category of finite sets. For more on the notion
of species, the reader may want to consult the book \cite{belale}.

\subsection{Rooted-operads}

Let $\PP$ be a species, such that $\ZZ\PP$ is endowed with an operad
structure (in the category of $\ZZ$-modules).

We assume that $\PP$ comes with a morphism of species to the species
of pointed sets. This means that to every $\PP$-structure $T$ on a
finite set $I$, one associates an element of $I$. We will call this
element the \textbf{root} of $T$.

We assume also that the composition $\circ_i$ is compatible with the
root in the following sense:
\begin{itemize}
\item if $i$ is the root of $S$,
the root of every term of $S \circ_i T$ is the root of $T$,
\item otherwise
the root of every term of $S \circ_i T$ is the root of $S$.
\end{itemize}

We will call this structure a \textbf{rooted-operad}.

Examples of this situation are provided by the Pre-Lie operad, the
Dendriform operad, the NAP operad \cite{livernet_nap} and the Perm
operad. For the Pre-Lie and NAP operad, the underlying species is the
species of rooted trees, and one takes the root of each tree. For the
Dendriform operad, the underlying species can be described as (rooted)
planar binary trees with labels on internal vertices, and one also
takes the root of each tree. For Perm, the underlying species is the
species of pointed sets, and the root morphism is the identity.

Let $\PP$ and $\PP'$ be two rooted-operads. A \textbf{morphism of
  rooted-operads} $\theta$ from $\PP$ to $\PP'$ is a morphism of
operads from $\ZZ\PP$ to $\ZZ\PP'$ such that for every element $p$ of
$\PP$, the root of every term of $\theta(p)$ is the root of $p$.

\begin{lemma}
  \label{check_on_gen}
  Let $\theta$ be a morphism of operads from $\ZZ\PP$ to $\ZZ\PP'$
  given by its value on elements of $\PP$ that are generators of
  $\ZZ\PP$. If, for every generator $p$, the root of every term in
  $\theta(p)$ is the root of $p$, then $\theta$ is a morphism of
  rooted-operads.
\end{lemma}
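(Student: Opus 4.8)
The plan is to show that $\theta$ does not move the root by organizing everything through the root-grading on $\ZZ\PP$ and $\ZZ\PP'$. For an element $x$, write $x = \sum_r x^{(r)}$ where $x^{(r)}$ collects the basis terms whose root is $r$; call $x$ \emph{root-homogeneous with root $r$} when $x = x^{(r)}$. The lemma is exactly the assertion that $\theta$ sends the root-$r$ subspace of $\ZZ\PP$ into the root-$r$ subspace of $\ZZ\PP'$, since every basis element $p \in \PP$ is itself root-homogeneous. First I would record the elementary fact that in any rooted-operad the composition $\circ_i$ respects this grading: applying the root axiom term by term, if $S$ is root-homogeneous with root $r_S$ and $T$ is root-homogeneous with root $r_T$, then $S \circ_i T$ is again root-homogeneous, with root $r_T$ if $i = r_S$ and root $r_S$ otherwise.

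Since the generators generate $\ZZ\PP$ as an operad, every element is a $\ZZ$-linear combination of iterated $\circ_i$-compositions of generators; by linearity it then suffices to prove, by induction on the number of compositions, that $\theta$ carries each such iterated composition to a root-homogeneous element with the same root. The base case is a single generator $p$: it is root-homogeneous, and the hypothesis says precisely that every term of $\theta(p)$ carries the root of $p$.

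For the inductive step I would take an expression $S \circ_i T$ with $S$ and $T$ built from fewer generators. Using that $\theta$ is a morphism of operads, $\theta(S \circ_i T) = \theta(S) \circ_i \theta(T)$, and the inductive hypothesis makes $\theta(S)$ root-homogeneous with root $r_S$ and $\theta(T)$ root-homogeneous with root $r_T$. By the closure fact above, applied now in $\PP'$, the composite $\theta(S) \circ_i \theta(T)$ is root-homogeneous, with root $r_T$ if $i = r_S$ and $r_S$ otherwise, which is exactly the root the axiom assigns to $S \circ_i T$ in $\PP$. The crux, and the step I would treat most carefully, is this matching of the two case distinctions: it hinges on the fact that the condition $i = r_S$ is literally the same whether it is read in $S$ or in $\theta(S)$, which is guaranteed only because the inductive hypothesis ensures $\theta$ has not changed the root. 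Granting this, every term of $\theta(S \circ_i T)$ carries the root of $S \circ_i T$, which closes the induction and proves the lemma.
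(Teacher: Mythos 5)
Your proof is correct and follows essentially the same route as the paper: an induction over how elements are built from generators by $\circ_i$, using that composition respects the root-grading in both $\ZZ\PP$ and $\ZZ\PP'$ together with the hypothesis on generators. The only cosmetic difference is that the paper inducts on arity and writes a basis element as a linear combination of compositions of smaller-arity elements, whereas you induct on the number of compositions in an iterated composite of generators; both reduce to the same base case and the same root-matching step.
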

\begin{proof}
  Let us prove by induction on the arity of $p\in\PP$ that every term
  in $\theta(p)$ has the same root as $p$. This is clearly true for
  the unit of $\ZZ\PP$.

  Let $x$ be an element of $\PP$. If $x$ is a generator, then the
  statement is true by hypothesis.

  Otherwise, $x$ can be written as a linear combination
  \begin{equation*}
    x = \sum_\alpha \lambda_\alpha p_\alpha \circ_{i_\alpha} q_\alpha,
  \end{equation*}
  where $p_\alpha$ and $q_\alpha$ are elements of $\PP$ of smaller
  arities. Because $\PP$ is a rooted-operad, every composition
  $p_\alpha \circ_{i_\alpha} q_\alpha$ is a linear composition of
  terms sharing the same root. One can therefore remove in the sum
  above every $\alpha$ such that $p_\alpha \circ_{i_\alpha} q_\alpha$
  does not have the same root as $x$. Then one has
  \begin{equation}
    \theta(x)=\sum_\alpha \lambda_\alpha \theta(p_\alpha) \circ_{i_\alpha} \theta(q_\alpha),
  \end{equation}
  and every term $\theta(p_\alpha) \circ_{i_\alpha} \theta(q_\alpha)$
  has the same root as $x$. The induction step is done.
\end{proof}

This condition holds for the morphism $\phi$ from the Pre-Lie operad
to the Dendriform operad, which is therefore a morphism of
rooted-operads.

\subsection{Triple operation associated with rooted-operads}

Let $\PP$ be a rooted-operad, as defined in the previous section.

Let us associate with $\PP$ the coinvariant space $\ZZ\PP_{\sym}$,
which is the free module over $\ZZ$ with basis indexed by isomorphism
classes of $\PP$-structures on all finite sets. One can also describe it as
\begin{equation}
  \ZZ\PP_{\sym} = \oplus_{m \geq 1} \ZZ\PP(m)_{\sym},
\end{equation}
where $\PP(m)$ is $\PP(\{1,2,\dots,m\})$. We will call $[T]$ the basis
element associated with the isomorphism class of the $\PP$-structure
$T$.

On $\ZZ\PP_{\sym}$, there is a natural structure of monoid, whose
associative product is defined using the composition of the operad
$\PP$, see for example \cite[App. A]{qidempotent}. Let us recall this
construction and introduce a refinement of it, which uses the
existence of the root.

Let $s=\sum_m s_m,t=\sum_m t_m$ be elements of $\ZZ\PP_{\sym}$, with
$s_m,t_m$ elements of $\ZZ\PP(m)_{\sym}$. Choose any representatives
$x_m,y_m$ of $s_m,t_m$ in $\ZZ\PP(m)$.

The monoid structure is given by
\begin{equation}
  s \circ  t = \sum_{m \geq 1} \sum_{n_1,\dots,n_{m} \geq 1} \langle \compo{x_m}{y_{n_1},\dots, y_{n_m}} \rangle,
\end{equation}
where $\langle \, \rangle$ is the quotient map to the coinvariant
space, and $(x,y_1,\dots,y_k) \mapsto \compo{x}{y_1,\dots,y_k}$ is the
global composition map of the operad $\PP$, here using the numbering
of the parts to match inputs of $x$ with the $y$'s.

Let now $s=\sum_m s_m,t=\sum_m t_m$ and $u=\sum_m u_m$ be elements of
$\ZZ\PP_{\sym}$, with $s_m,t_m,u_m$ elements of
$\ZZ\PP(m)_{\sym}$. Choose representatives $x_m,y_m$ and $z_m$ of
$s_m,t_m,u_m$ in $\ZZ\PP(m)$, such that the root of $x_m$ is $1$.

Let us introduce the following operation
\begin{equation}
  \label{def_formula_diam}
  s \diam (t, u) = \sum_{m \geq 1} \sum_{n_1,\dots,n_{m} \geq 1} \langle \compo{x_m}{y_{n_1},z_{n_2},\dots, z_{n_m}} \rangle.
\end{equation}
This is well defined, because $\langle
\compo{x_m}{y_{n_1},z_{n_2},\dots, z_{n_m}} \rangle$ does not depend
on the chosen representatives, provided that the root of $x_m$ is $1$.

\begin{proposition}
  The operation $(s,t,u) \mapsto s \diam (t,u)$ satisfies 
  \begin{equation}
    \label{radixe_axiom}
    ( s \diam (t,u) ) \diam (v,w)= s \diam (t \diam (v,w), u \circ  w),
  \end{equation}
  and is linear with respect to $s$ and to $t$. When $t=u$, it reduces
  to the monoid structure:
  \begin{equation}
    s \diam (t,t) = s \circ  t.
  \end{equation}
  The unit $1$ of the operad gives a unit $[1]$, \textit{i.e.} one has
  \begin{equation}
    [1] \diam (t,u) = t \quad \text{and}\quad s \diam ([1],[1])=s.
  \end{equation}
\end{proposition}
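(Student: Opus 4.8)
The plan is to read off all four assertions from the defining formula \eqref{def_formula_diam}, disposing of the elementary properties first and keeping the associativity identity \eqref{radixe_axiom} for last. Linearity in $s$ and in $t$ is immediate, since in \eqref{def_formula_diam} the representative $x_m$ of $s$ appears once as the outer operation and $y_{n_1}$ of $t$ appears once, in the root slot only, and the global composition map of $\PP$ is linear in each argument. The reduction $s \diam (t,t) = s \circ t$ follows by choosing equal representatives $y_m = z_m$ for the two copies of $t$, whereupon \eqref{def_formula_diam} becomes verbatim the definition of the monoid product. The unit statements are then specialisations: taking $s = [1]$ forces $x_1 = 1$ and annihilates all higher $x_m$, so only $\langle \compo{1}{y_{n_1}} \rangle = \langle y_{n_1} \rangle$ survives and one recovers $t$, while $s \diam ([1],[1]) = s \circ [1] = s$ by the reduction just proved together with the fact that $[1]$ is the monoid unit.

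For the radix axiom \eqref{radixe_axiom} the decisive point is to track how the root propagates through $\diam$. First I would note that, by the rooted-operad compatibility of composition with roots, every term of $\compo{x_m}{y_{n_1}, z_{n_2}, \dots, z_{n_m}}$ with the root of $x_m$ equal to $1$ has its own root equal to the root of $y_{n_1}$; thus the root slot of $s \diam (t,u)$ lies inside the $t$-factor. I would then expand the left-hand side $(s \diam (t,u)) \diam (v,w)$ by applying \eqref{def_formula_diam} twice, choosing at the outer stage a representative whose root is labelled $1$. Inserting $v$ into this root slot and $w$ into every remaining slot then means: placing $v$ at the root of each $y_{n_1}$ and $w$ at all other inputs of $y_{n_1}$, and placing $w$ at every input of each $z_{n_i}$ with $i \geq 2$.

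It then remains to regroup these insertions by the associativity of operadic composition. The factor assembled from $y_{n_1}$, with $v$ at its root and $w$ elsewhere, is precisely a term of $t \diam (v,w)$; each factor assembled from $z_{n_i}$, with $w$ in every slot, is a term of $u \circ w$; and $x_m$ fed with these into its root slot and its remaining slots respectively is exactly a term of $s \diam (t \diam (v,w), u \circ w)$. Checking that the nested summations over arities on the two sides correspond under this regrouping completes the proof. The main obstacle, and the step meriting genuine care, is the bookkeeping rather than any deep idea: one must confirm that representatives can be chosen compatibly with the root-at-$1$ requirement at each nested use of $\diam$, and that the well-definedness clause following \eqref{def_formula_diam} applies, so that the rearrangement forced by operad associativity indeed descends to the coinvariant quotient $\langle\,\rangle$.
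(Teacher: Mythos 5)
Your proof is correct and follows essentially the same route as the paper's: the paper likewise disposes of linearity, the case $t=u$, and the unit properties by inspection of the definition, and establishes \eqref{radixe_axiom} by choosing representatives for $s$ and $t$ with root $1$, invoking the operad axioms together with the root-compatibility conditions of a rooted-operad. Your write-up merely makes explicit the root-tracking and regrouping that the paper leaves implicit.
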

\begin{proof}
  The linearity with respect to the parameters $s$ and $t$, the
  special case when $t=u$ and the unit properties all follows by
  inspection from the definition \eqref{def_formula_diam}.

  Concerning formula \eqref{radixe_axiom}, one has to compute both
  sides by choosing representatives with care. It is necessary to
  choose the representatives for $s$ and for $t$ such that their root
  is $1$. Then the result follows from the usual axioms of operads,
  and from the conditions on roots imposed by the definition of
  rooted-operads.
\end{proof}

% is there already a name ?
One could call this kind of structure a \textbf{rooted-monoid}. There
is an obvious notion of morphism of rooted-monoids.

\begin{proposition}
  The construction that maps a rooted-operad $\PP$ to the space
  $\ZZ\PP_\sym$ endowed with the operation $\diam$ is a functor from
  the category of rooted-operads to the category of rooted-monoids.
\end{proposition}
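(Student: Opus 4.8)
The plan is to construct the functor on morphisms and then verify the three things required: that it produces morphisms of rooted-monoids, that it preserves identities, and that it preserves composition. On objects the assignment is exactly the one provided by the previous proposition, which already shows that $(\ZZ\PP_\sym, \diam)$ is a rooted-monoid.

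First I would produce, from a morphism of rooted-operads $\theta$ from $\PP$ to $\PP'$, a linear map $\theta_\sym$ from $\ZZ\PP_\sym$ to $\ZZ\PP'_\sym$. Since $\theta$ is in particular a morphism of operads, it is given by a collection of $\sym_m$-equivariant maps $\ZZ\PP(m) \to \ZZ\PP'(m)$; equivariance means each descends to the coinvariant spaces $\ZZ\PP(m)_\sym \to \ZZ\PP'(m)_\sym$, and summing over $m \geq 1$ defines $\theta_\sym$. The two functoriality axioms are then immediate at this level: the identity morphism induces the identity on each $\ZZ\PP(m)$, hence on coinvariants, and the composite of two rooted-operad morphisms induces on each $\ZZ\PP(m)$ the composite of the two maps, which descends to the composite on coinvariants.

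The substantial step is to check that $\theta_\sym$ is a morphism of rooted-monoids, that is, that it sends the unit $[1]$ to $[1]$ and that it commutes with $\diam$. The unit is handled at once, since an operad morphism sends the operad unit to the operad unit, giving $\theta_\sym([1]) = [1]$. For the compatibility with $\diam$, I would choose, as in the definition \eqref{def_formula_diam}, representatives $x_m$, $y_m$, $z_m$ of the homogeneous components of $s$, $t$, $u$, with the root of each $x_m$ normalized to $1$. Applying $\theta$ to each summand and using that an operad morphism commutes with the global composition map gives
\begin{equation*}
  \theta\left(\compo{x_m}{y_{n_1},z_{n_2},\dots,z_{n_m}}\right) = \compo{\theta(x_m)}{\theta(y_{n_1}),\theta(z_{n_2}),\dots,\theta(z_{n_m})}.
\end{equation*}
Passing to coinvariants and summing over $m$ and the $n_j$ then identifies $\theta_\sym(s \diam (t,u))$ with $\theta_\sym(s) \diam (\theta_\sym(t), \theta_\sym(u))$, provided $\theta(x_m)$, $\theta(y_m)$, $\theta(z_m)$ are legitimate representatives for computing the right-hand side.

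The main obstacle, and the one place where the rooted hypothesis is genuinely used, is precisely this legitimacy: the operation $\diam$ is only well defined once one normalizes the first argument so that its root is $1$, and one must know that this normalization survives under $\theta$. This is exactly what the definition of a morphism of rooted-operads guarantees: since $x_m$ has root $1$, every term of $\theta(x_m)$ also has root $1$, so $\theta(x_m)$ is an admissible representative on the image side. With this observation the termwise identity above is valid, the two triple products agree, and $\theta_\sym$ is a morphism of rooted-monoids, completing the verification that the construction is a functor.
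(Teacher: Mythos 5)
Your proof is correct and follows essentially the same route as the paper: the paper's entire argument is the observation that a morphism of rooted-operads sends an element with root $1$ to a sum of terms with root $1$, so that the chosen representatives remain admissible for computing $\diam$ on the image side, after which functoriality follows by inspection of the definition \eqref{def_formula_diam}. You have simply written out in full the verification that the paper leaves to the reader, correctly identifying the root-preservation property as the one place where the rooted hypothesis is genuinely needed.
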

\begin{proof}
  By the definition of the morphisms of rooted operads, the image of
  an element with root $1$ is a sum of terms with root $1$. The functoriality then follows by inspection of the definition \eqref{def_formula_diam}.
\end{proof}

\subsection{Combinatorial use: rooted case}

Let us now present the combinatorial application of the operation
$\diam$ that is used in the main part of the article.

Let $\PP$ be a species, such that $\NN\PP$ is endowed with an
rooted-operad structure (in the category of $\NN$-modules rather than
$\ZZ$-modules).

Examples of this situation are also provided by the Pre-Lie operad,
the Dendriform operad, the NAP operad and the Perm operad.

Let $X$ be a species with a morphism of species to $\PP$. When an
$X$-structure $\alpha$ has image the $\PP$-structure $S$, we will say
that $\alpha$ is over $S$. The set of these structures will be denoted
by $X_{/S}$. Its cardinality only depends on the isomorphism class of
$S$. Let $s_X$ be the generating series
\begin{equation*}
  s_X = \sum_{[S]} \# X_{/S} \frac{[S]}{\aut S},
\end{equation*}
where the sum runs over the set of isomorphism classes of
$\PP$-structures, and $\aut(S)$ is the cardinal of the automorphism
group of any representative $S$ of $[S]$. This is an element of
$\QQ\PP_{\sym}$.

Let $I$ be a finite set. Let $U$ be a $\PP$-structure on $I$. Let
$\pi$ be a partition of $I$. Let $S$ be a $\PP$-structure on the set
of parts of $\pi$, and let $(T_e)_e$ be $\PP$-structures on the parts
$e$ of $\pi$. One will denote by $\mathbf{f}_{S,(T_e)_e}^U$ the
coefficient of $U$ in the global composition $\compo{S}{(T_e)_e}$ in
the operad $\PP$. This is a positive integer by the assumption that
$\NN\PP$ is an operad.

Let us consider now four species $A$, $B$, $C$ and $D$, each one with a
morphism of species to $\PP$.

Suppose that (\textbf{hypothesis} $H_\natural(A,B,C,D)$) for any finite set $I$,
any $r\in I$ and any $\PP$-structure $U$ on $I$ with root $r$, there
is a bijection between
\begin{itemize}
\item the set $D_{/U}$ of $D$-structures over $U$,
\item the set of tuples
  \begin{equation}
    (\pi,S,(T_e)_e,\alpha,(\beta_e)_e,\lambda),
  \end{equation}
  where $\pi$ is a partition of $I$ with a part $\eps$ containing $r$,
  $S$ is a $\PP$-structure with root $\eps$ on the set of parts of
  $\pi$, $(T_e)_e$ are $\PP$-structures on the parts $e$ of $\pi$ such
  that the root of $T_{\eps}$ is $r$, $\alpha\in A_{/S}$,
  $\beta_{\eps} \in B_{/T_{\eps}}$, $\beta_e \in C_{/T_e}$ for
  $e\not=\eps$ and $\lambda \in \{1,\dots,\mathbf{f}_{S,(T_e)_e}^U\}$.
\end{itemize}

\begin{proposition}
  \label{diam_prop}
  Under the hypothesis $H_\natural(A,B,C,D)$, one has
  \begin{equation}
    s_A \diam (s_B, s_C) = s_D.
  \end{equation}
\end{proposition}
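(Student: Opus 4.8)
The plan is to compare the two sides coefficient by coefficient in the basis $\{[U]\}$ of $\QQ\PP_{\sym}$. Since $s_D = \sum_{[U]} \# D_{/U}\,[U]/\aut U$ by definition, it suffices to prove that, for every isomorphism class $[U]$ of $\PP$-structures, the coefficient of $[U]$ in $s_A \diam (s_B, s_C)$ equals $\# D_{/U}/\aut U$.

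First I would fix a labelled representative $U$ of $[U]$ on a finite set $I$, with root $r$, and unfold the definition \eqref{def_formula_diam}. Substituting the expansions $s_A = \sum_{[S]} \# A_{/S}\,[S]/\aut S$ and likewise for $s_B$ and $s_C$, and choosing—as the definition of $\diam$ demands—a representative $x_m$ of the $A$-series whose root is the label $1$, each term $\langle \compo{x_m}{y_{n_1}, z_{n_2}, \dots, z_{n_m}}\rangle$ expands, after projection to the coinvariant space, into a combination of basis elements with multiplicities recorded by the operadic structure constants $\mathbf{f}^U_{S,(T_e)_e}$. Collecting the contributions to $[U]$, its coefficient becomes a sum over data consisting of an outer $\PP$-structure $S$, a distinguished root part $\eps$ carrying a $\PP$-structure $T_\eps$, and the remaining parts carrying $\PP$-structures $T_e$, weighted by $\# A_{/S}\cdot \# B_{/T_\eps}\cdot \prod_{e\neq\eps}\# C_{/T_e}$, by $\mathbf{f}^U_{S,(T_e)_e}$, and by the automorphism factors $\tfrac{1}{\aut S}\prod_e \tfrac{1}{\aut T_e}$.

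The heart of the argument is then to show that this weighted sum equals $\tfrac{1}{\aut U}$ times the number of tuples $(\pi, S, (T_e)_e, \alpha, (\beta_e)_e, \lambda)$ attached to the fixed labelled $U$ in hypothesis $H_\natural(A,B,C,D)$. This is the standard labelled-versus-unlabelled bookkeeping for operadic series: one passes from isomorphism classes to labelled structures on $I$ via orbit--stabiliser, observes that $\mathbf{f}^U_{S,(T_e)_e}$ counts exactly the labelled gluings of the $(T_e)$ into $S$ that produce the fixed labelled $U$ (which is where the index $\lambda \in \{1,\dots,\mathbf{f}^U_{S,(T_e)_e}\}$ originates), and checks that the two root conditions arise correctly: that $S$ has root the part $\eps$ reflects the root-$1$ representative $x_m$ required in \eqref{def_formula_diam}, while that $T_\eps$ has root $r$ is forced by the rooted-operad axiom making the root of $\compo{x_m}{y_{n_1}, z_{n_2}, \dots}$ equal to that of $y_{n_1}$, hence equal to $r$ once we restrict to terms giving the fixed $U$. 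The $B$-structure then sits on the part containing the root and the $C$-structures on the others, exactly matching the tuple data; specialising $s_B = s_C$ recovers the corresponding statement for the monoid product $\circ$. I expect this normalisation step—confirming that the automorphism factors and structure constants reassemble into $\tfrac{1}{\aut U}$ with no spurious multiplicities—to be the only genuine difficulty; it is precisely the verification the appendix's formalism is designed to make routine.

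Finally, the bijection of hypothesis $H_\natural(A,B,C,D)$ identifies the set of these tuples with $D_{/U}$, so their number is $\# D_{/U}$. Therefore the coefficient of $[U]$ in $s_A \diam (s_B, s_C)$ is $\# D_{/U}/\aut U$, which agrees with the coefficient of $[U]$ in $s_D$. Since $[U]$ was arbitrary, the identity $s_A \diam (s_B, s_C) = s_D$ follows.
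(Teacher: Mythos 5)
Your proposal is correct and follows essentially the same route as the paper's proof: both reduce to a coefficientwise comparison in the basis $\{[U]\}$, using the bijection of $H_\natural(A,B,C,D)$ together with the structure constants $\mathbf{f}^U_{S,(T_e)_e}$, the two root conditions, and an orbit--stabiliser bookkeeping of the automorphism factors. The only cosmetic difference is the direction of the computation: the paper starts from the labelled identity supplied by the hypothesis and projects it to coinvariants (using a factor $(k-1)!$ to order the parts with the root part first, then the multinomial formula), whereas you expand the coinvariant expression for $s_A \diam (s_B, s_C)$ and lift it to labelled structures; the normalisation step you flag as the only real difficulty is exactly the calculation the paper carries out explicitly.
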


\begin{proof}
  Let us fix an integer $m \geq 1$ and an integer $r \in \{1,\dots,m\}$.

  From the hypothesis $H_\natural(A,B,C,D)$, one can obtain the following
  equality
  \begin{equation*}
    \sum \limits_{\underset{\rt(U)=r}{U\in\PP(m)}}\# D_{/U} U = \sum_{\underset{ r\in \eps \in \pi}{\pi \in \prt(m)}} \sum_{S \in \PP(\pi)} \sum_{\underset{ \rt(T_{\eps})=r}{T_e \in \PP(e)}} \#A_{/S} \#B_{/T_{\eps}} \prod_{e\not=\eps} \# C_{/T_e} \compo{S}{(T_e)_e},
  \end{equation*}
  where $\prt(m)$ is the set of partitions of $\{1,\dots,m\}$.

  Let us take the image of this equality by the projection to coinvariants.

  The left hand side becomes
  \begin{equation}
    \label{eq_pour_D}
    \sum_{[U]\in\PP(m)_{\sym}} \frac{(m-1)!\# D_{/U}}{\aut U} [U].
  \end{equation}

  The right hand side becomes
  \begin{equation*}
    \sum_{k \geq 1} \sum_{\underset{ r\in \eps \in \pi}{\pi \in \prt(m,k)}} \sum_{\underset{[T_e] \in \PP(e)_{\sym}}{[S] \in \PP(\pi)_{\sym}}}  \frac{(k-1)!\#A_{/S}}{\aut S}  \frac{(\#\eps-1)!\#B_{/T_{\eps}}}{\aut T_\eps} \prod_{e\not=\eps}   \frac{\#e!\# C_{/T_e}}{\aut T_e}  \langle \compo{S}{(T_e)_e} \rangle,
  \end{equation*}
  where $\prt(m,k)$ is the set of partitions of $\{1,\dots,m\}$ into
  $k$ parts.

  By using the factor $(k-1)!$ to define an order on the parts of the
  partition, such that the first part contains the root, one gets
  \begin{equation*}
    \sum_{k \geq 1} \sum_{\underset{ r\in \pi_1}{\pi_1,\dots,\pi_k}} \sum_{\underset{[T_i] \in \PP(\pi_i)_{\sym}}{[S] \in \PP(k)_{\sym}}}  \frac{\#A_{/S}}{\aut S}  \frac{(\#\pi_1-1)!\#B_{/T_1}}{\aut T_1} \prod_{j=2}^{k}   \frac{\#\pi_j!\# C_{/T_j}}{\aut T_j}  \langle \compo{S}{(T_j)_j} \rangle.
  \end{equation*}

  By using the multinomial formula for the number of ordered
  partitions of $m$ into $k$ parts of size $n_1,\dots,n_k$, one gets
  \begin{equation}
    \label{eq_pour_ABC}
    (m-1)! \sum_{k \geq 1} \sum_{\underset{n_1+\dots+n_k=m}{n_1,\dots,n_k \geq 1}} \sum_{\underset{[T_i] \in \PP(n_i)_{\sym}}{[S] \in \PP(k)_{\sym}}}   \frac{\#A_{/S}}{\aut S}  \frac{\#B_{/T_1}}{\aut T_1} \prod_{j=2}^k  \frac{\# C_{/T_j}}{\aut T_j}  \langle \compo{S}{T_1,(T_j)_{j\geq 2}} \rangle.
  \end{equation}

  From the equality between \eqref{eq_pour_D} and \eqref{eq_pour_ABC},
  one deduces the result, after division by $(m-1)!$ and summation
  over $m \geq 1$.
\end{proof}

If $B=C$, the operation $\diam$ reduces to the monoid product $\circ
$, and the hypothesis $H_\natural(A,B,B,D)$ can be formulated without
using the root. One can obtain in this way a simpler analog of
proposition \ref{diam_prop} valid for any operad on a species
$\PP$. This is made explicit in the next paragraph.

\subsection{Combinatorial use: group case}

Let $\PP$ be a species, such that $\NN\PP$ is endowed with an operad
structure.

Let us consider now three species $A$, $B$ and $C$, each one with a
morphism of species to $\PP$.

Suppose that (\textbf{hypothesis} $H_\sharp(A,B,C)$) for any finite
set $I$ and any $\PP$-structure $U$ on $I$, there is a bijection
between
\begin{itemize}
\item the set $C_{/U}$ of $C$-structures over $U$,
\item the set of tuples
  \begin{equation}
    (\pi,S,(T_e)_e,\alpha,(\beta_e)_e,\lambda),
  \end{equation}
  where $\pi$ is a partition of $I$, $S$ is a $\PP$-structure on the
  set of parts of $\pi$, $(T_e)_e$ are $\PP$-structures on the parts
  $e$ of $\pi$, $\alpha\in A_{/S}$, $\beta_{e} \in B_{/T_e}$ and $\lambda \in
  \{1,\dots,\mathbf{f}_{S,(T_e)_e}^U\}$.
\end{itemize}

\begin{proposition}
  \label{circ_prop}
  Under the hypothesis $H_\sharp(A,B,C)$, one has
  \begin{equation}
    s_A \circ s_B = s_c.
  \end{equation}
\end{proposition}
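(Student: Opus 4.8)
The plan is to mirror the proof of Proposition \ref{diam_prop}, exploiting the fact that the group case is genuinely simpler: the hypothesis $H_\sharp(A,B,C)$ treats every part of the partition in the same way (there is no distinguished root-part $\eps$), so there is no need to fix a root element $r \in I$ and all the root bookkeeping disappears. As a first step I would fix an integer $m \geq 1$ and sum the bijective identity furnished by $H_\sharp(A,B,C)$ over all $\PP$-structures $U$ on $\{1,\dots,m\}$. Since the parameter $\lambda$ ranges over $\{1,\dots,\mathbf{f}^U_{S,(T_e)_e}\}$ and $\mathbf{f}^U_{S,(T_e)_e}$ is by definition the multiplicity of $U$ in the global composition $\compo{S}{(T_e)_e}$, summing over $U$ rebuilds the full composition and yields
\begin{equation*}
  \sum_{U \in \PP(m)} \#C_{/U}\, U = \sum_{\pi \in \prt(m)} \sum_{S \in \PP(\pi)} \sum_{(T_e)_e} \#A_{/S} \prod_e \#B_{/T_e}\, \compo{S}{(T_e)_e}.
\end{equation*}

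Next I would apply the projection to coinvariants $\langle\,\rangle$ to both sides. On the left, grouping the labelled structures $U$ by isomorphism class and using that each class $[U]$ is represented by $m!/\aut U$ labelled structures (no root being fixed), one obtains $\sum_{[U]} \frac{m!\,\#C_{/U}}{\aut U}[U]$, that is to say $m!$ times the degree-$m$ component of $s_C$. On the right, I would organise the sum by the number $k$ of parts and pass to isomorphism classes $[S] \in \PP(\pi)_{\sym}$ and $[T_e] \in \PP(e)_{\sym}$. The factorial bookkeeping is exactly as in the proof of Proposition \ref{diam_prop}, with two simplifications: the top-level factor becomes $k!$ rather than $(k-1)!$ (no constraint forces a chosen part to carry the root, so all $k$ parts may be freely ordered), and every part contributes a uniform $B$-factor $\frac{\#e!\,\#B_{/T_e}}{\aut T_e}$ (there is no special $\eps$-part). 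Using the $k!$ to turn unordered partitions into ordered ones, and then the multinomial identity $\binom{m}{n_1,\dots,n_k}\prod_j n_j! = m!$, the right-hand side collapses to $m!$ times the degree-$m$ component of $s_A \circ s_B$, read directly off the defining formula of the monoid product $\circ$. Equating the two sides, dividing by $m!$ and summing over $m \geq 1$ then gives $s_A \circ s_B = s_C$.

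The one step requiring care, and the only real obstacle, is the factorial and orbit counting in the coinvariant projection of the right-hand side: one must verify that the orders of the relevant automorphism groups and the sizes of the symmetric-group orbits combine to produce precisely the weights $\frac{\#A_{/S}}{\aut S}$ and $\frac{\#B_{/T_j}}{\aut T_j}$ appearing in the definition of $\circ$. This is routine once the computation of Proposition \ref{diam_prop} is in hand, since dropping the root merely removes the distinction between the part $\eps$ and the others, replacing $(k-1)!$ by $k!$ and the mixed $B$/$C$ weighting by a uniform $B$ weighting; in fact this is precisely the simpler analogue alluded to in the remark following Proposition \ref{diam_prop}.
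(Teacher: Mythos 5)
Your proof is correct and follows exactly the route the paper intends: Proposition \ref{circ_prop} is left without an explicit proof precisely because it is the ``simpler analog'' of Proposition \ref{diam_prop} obtained by dropping the root bookkeeping, and your accounting (summing over all of $\PP(m)$ rather than over structures with a fixed root, $m!/\aut U$ in place of $(m-1)!/\aut U$, $k!$ in place of $(k-1)!$, a uniform $\#e!\,\#B_{/T_e}/\aut T_e$ weight on every part, and the multinomial collapse) is exactly the right adaptation. Nothing is missing.
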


\bibliographystyle{plain}
\bibliography{idem_nara}

\end{document}